\newcommand{\R}{\mathbb{R}}
\newcommand{\N}{\mathbb{N}}
\newcommand{\T}{\mathbb{T}}
\renewcommand{\leq}{\leqslant}
\renewcommand{\geq}{\geqslant}
\renewcommand{\div}{\operatorname{div\,}}
\newcommand{\curl}{\operatorname{curl}}
\newcommand{\tr}{\operatorname{tr}}
\newcommand{\dist}{\operatorname{dist}}
\newcommand{\diam}{\operatorname{diam}}
\newcommand{\Id}{\operatorname{Id}}
\newtheorem{Theorem}{Theorem}
\newtheorem{Definition}{Definition}
\newtheorem{Corollary}{Corollary}
\newtheorem{Proposition}{Proposition}
\newtheorem{Lemma}{Lemma}
\newtheorem{Remark}{Remark}
\title{Uniqueness results for weak solutions \\ of two-dimensional fluid-solid systems}
\author{Olivier Glass\footnote{CEREMADE, UMR CNRS 7534,
Universit\'e Paris-Dauphine, 
Place du Mar\'echal de Lattre de Tassigny,
75775 Paris Cedex 16,
FRANCE
},
Franck Sueur\footnote{CNRS, UMR 7598, Laboratoire Jacques-Louis Lions, F-75005, Paris, France}
\footnote{UPMC Univ Paris 06, UMR 7598, Laboratoire Jacques-Louis Lions, F-75005, Paris, France}}
\begin{document}
\maketitle
\begin{abstract}
In this paper, we consider two systems modelling the evolution of a rigid body in an incompressible fluid  in a bounded domain of the plane. 
The first system corresponds to an  inviscid fluid  driven by the Euler equation whereas the other one corresponds to a viscous fluid 
driven by the Navier-Stokes system. In both cases we investigate the uniqueness of weak solutions, \`a la Yudovich for the Euler case, \`a la Leray for the Navier-Stokes case, as long as no collision occurs.
\end{abstract}
%
%
%
%
%
\section{Introduction}
In this paper, we consider two systems modelling the evolution of a rigid body in an incompressible fluid in dimension two. The two cases correspond respectively to the inviscid case, where the fluid is driven by the Euler equation, and the viscous case, where it is driven by the Navier-Stokes system. In both cases we investigate the uniqueness of weak solutions (\`a la Yudovich for the Euler case, \`a la Leray for the Navier-Stokes case). \par
\ \par
To model the body-fluid systems, we introduce the following objects. Let $\Omega$ a smooth connected bounded open set in $\R^{2}$, and ${\mathcal S}_{0}$ smooth closed connected and simply connected subset of $\Omega$. We consider the motion in the domain $\Omega$ of a solid occupying at time $t$ the domain $\mathcal{S} (t) \subset \Omega$, where $\mathcal{S} (0)=\mathcal{S}_{0}$. \par
\ \par
The motion of this solid is rigid, so that ${\mathcal S}(t)$ is obtained by a rigid movement (that is a translation and a rotation) from its initial position ${\mathcal S}_{0}$. The group of rigid transformations of the plane is the special Euclidean group, denoted by $SE(2)$. We will denote $m>0$ and $ \mathcal{J}>0$ respectively the mass and the inertia of the body and $h(t)$ the position of its center of mass at time $t$. We also introduce
$$\ell (t) := h' (t),$$
the velocity of the center of mass and
$$r  (t):= \theta'(t),$$
the angular velocity of the body. The angle $\theta$ measures the rotation between ${\mathcal S}(t)$ and ${\mathcal S}_{0}$.
Accordingly, the solid velocity is given by
\begin{equation} \label{vietendue}
u_{{\mathcal S}}(t,x) := \ell(t) + r(t) (x-h(t))^{\perp}.
\end{equation}
A way to represent the rigid motion from ${\mathcal S}_{0}$ to $\mathcal{S}(t)$ is to introduce the rotation matrix 
\begin{equation*}
Q (t):= 
\begin{bmatrix}
\cos  \theta (t) & - \sin \theta (t)
\\  \sin  \theta (t) & \cos  \theta (t)
\end{bmatrix}.
\end{equation*}
Then the position $\tau (t,x) \in \mathcal{S} (t)$  at 
the time $t$ of the point fixed to the body with an initial position $x$ is 
\begin{equation*}
\tau(t,x) := h (t) + Q (t)(x- h(0)),
\end{equation*}
so that
\begin{equation*}
{\mathcal S}(t) = \tau (t)({\mathcal S}_{0}).
\end{equation*}
We denote 
\begin{equation} \label{Solideci}
h (0)= h_0 , \ h' (0)= \ell_0, \ \theta (0) =  0 ,\ r  (0)=  r _0,
\end{equation}
the initial values of the solid data. \par

Let us stress here that, given some initial data, it suffices to know $(\ell,r)$ to deduce all the other objects above, since 
to $(\ell,r) \in C^{0}([0,T];\R^{2} \times \R)$ we can associate $(h^{\ell,r} , \theta^{\ell,r}) \in C^{1}([0,T]; \R^{2} \times \R)$ by
\begin{equation} \label{Eq:xbq}
h^{\ell,r} (t) = h_{0} + \int_{0}^{t} \ell, \quad 
\theta^{\ell,r} (t) =   \int_{0}^{t} r ,
\end{equation}
the velocity
\begin{equation} \label{Defvsolide}
u_{{\mathcal S}}^{\ell,r}(t,x) := \ell(t) + r(t) (x-h^{\ell,r}(t))^\perp,
\end{equation}
and
\begin{eqnarray}
\label{Eq:rota}
Q^{\ell,r} (t):= 
\begin{bmatrix}
\cos  \theta^{\ell,r} (t) & - \sin \theta^{\ell,r} (t)
\\  \sin  \theta^{\ell,r} (t) & \cos  \theta^{\ell,r} (t)
\end{bmatrix}.
\end{eqnarray}
We also deduce the rigid displacement and the position of the solid, let us say $\tau^{\ell,r}(t)$ and ${\mathcal S}^{\ell,r}(t)$ defined by
\begin{equation} \label{Eq:St}
\tau^{\ell,r}(t): x \mapsto Q^{\ell,r}(t)[x- h_{0}] + h^{\ell,r} (t) \in SE(2), \ \text{ and } \  {\mathcal S}^{\ell,r}(t)=\tau^{\ell,r}(t){\mathcal S}_{0}.
\end{equation}
Then we define the fluid domain as 
\begin{equation} \label{Eq:Do}
{\mathcal F}^{\ell,r}(t):=\Omega \setminus {\mathcal S}^{\ell,r}(t) .
 \end{equation}
We may omit the dependence on $(\ell,r)$ when there is no ambiguity on the various objects defined above. \par
\ \par
In the rest of the domain, that is in the open set 
\begin{equation*}
\mathcal{F}(t) := \Omega \setminus {\mathcal S} (t),
\end{equation*}
evolves a planar fluid driven by the Euler or the Navier-Stokes equations. We denote correspondingly 
\begin{equation*}
{\mathcal F}_{0}:= \Omega \setminus {\mathcal S}_{0},
\end{equation*}
the initial fluid domain. We will consider for each $t$ the velocity field $u=u(t,x) \in \R^{2}$ and the pressure field $p=p(t,x) \in \R$ in ${\mathcal F}(t)$. The fluid will be supposed in both cases to be homogeneous of density $1$, in order to simplify the equations (and without loss of generality). We denote
\begin{equation}\label{Eulerci2}
u |_{t= 0} = u_0 ,
\end{equation}
the initial value of the fluid velocity field. \par
\par
Now to be more specific on the systems under view, we distinguish between the two cases.
\subsection{The Euler case}
In this case, the fluid equation is the incompressible Euler equation and the body evolves according to Newton's law, under the influence of the pressure alone. The boundary conditions correspond to the impermeability of the boundary and involve the normal component of the velocity.
The complete system driving the dynamics reads
\begin{gather}
\label{Euler1}
\frac{\partial u}{\partial t}+(u\cdot\nabla)u + \nabla p =0 \ \text{ for } \ x \in \mathcal{F}(t), \\
\label{Euler2}
\div u = 0 \ \text{ for } \  x \in \mathcal{F}(t) ,  \\
\label{Euler3}
u \cdot n =  u_\mathcal{S} \cdot n \ \text{ for } \ x\in \partial \mathcal{S}  (t),  \\
\label{Euler4}
u \cdot n =  0 \ \text{ for } \ x \in \partial \Omega,  \\
\label{Solide1} 
m  h'' (t) =  \int_{ \partial \mathcal{S} (t)} p \, n \, d \sigma ,  \\
\label{Solide2} 
\mathcal{J}  \theta'' (t) =    \int_{ \partial   \mathcal{S} (t)} p \, (x-  h (t) )^\perp  \cdot n \, d \sigma.
\end{gather}
When $x=(x_1,x_2)$ the notation $x^\perp$ stands for  $$x^\perp :=( -x_2 , x_1 ),$$ $n$ denotes the unit outward normal on $\partial \mathcal{F}(t)$, $d \sigma$ denotes the integration element on the boundary  $\partial \mathcal{S}(t)$ of the body. \par
For this system, one can prove the existence of a solution ``\`a la Yudovich'' \cite{Yudovich} on a time interval limited only by the possible encounter of the solid and the boundary $\partial \Omega$. The main assumption is that the initial vorticity
\begin{equation*}
\omega_{0} := \curl u_{0},
\end{equation*}
is bounded in $\Omega$.
\begin{Theorem} \label{ThmYudo}
For any $u_0 \in C^{0}(\overline{\mathcal{F}_0};\R^{2})$, $(\ell_0,r_0) \in \R^2 \times \R$, such that:
\begin{equation} \label{CondCompatibilite}
\div u_0 =0 \text{ in } {\mathcal F}_0, \   u_0   \cdot  n = (\ell_0 + r_0 (x-h_{0})^{\perp})   \cdot  n \text{ on } \partial \mathcal{S}_0, \ 
 u_0   \cdot  n = 0 \text{ on } \partial \Omega,
\end{equation}
and
\begin{equation} \label{TourbillonYudo}
\omega_{0} := \curl u_0  \in L^{\infty}({\mathcal F}_0),
\end{equation}
there exists $T>0$ and a solution 
\begin{equation*}
(\ell,r,u) \in  C^1 ([0,T]; \R^2 \times \R) \times [L^{\infty}(0,T; \mathcal{LL}({\mathcal F}(t))) \cap C^{0}([0,T]; W^{1,q}({\mathcal F}(t)))], \ \ \forall q \in [1,+\infty),
\end{equation*}
of \eqref{Euler1}-\eqref{Solide2}. Moreover, if $T<+\infty$ is maximal, then
\begin{equation} \label{Explosion}
\dist ({\mathcal S}(t), \partial \Omega) \rightarrow 0 \text{ as } t \rightarrow T^{-}.
\end{equation}
\end{Theorem}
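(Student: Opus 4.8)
The plan is to prove this existence result via a fixed-point/iteration scheme combined with uniform a priori estimates, following the strategy of Yudovich adapted to the moving-domain setting. The central difficulty is that the fluid domain $\mathcal{F}^{\ell,r}(t)$ itself depends on the unknown solid trajectory $(\ell,r)$, so the problem is genuinely coupled and the domain is time-dependent. The natural way to handle this is to reformulate the whole system in a fixed reference domain. I would fix the solid displacement data $(\ell,r) \in C^0([0,T];\R^2\times\R)$, use \eqref{Eq:xbq}--\eqref{Eq:St} to construct the associated transformation $\tau^{\ell,r}(t)$, and pull back the fluid equations \eqref{Euler1}--\eqref{Euler4} to the fixed domain $\mathcal{F}_0$ by the change of variables $y = (\tau^{\ell,r}(t))^{-1}(x)$. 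This produces a transported Euler-type system on $\mathcal{F}_0$ whose coefficients are smooth functions of $(\ell,r)$ and whose vorticity is transported along the (now autonomous in form) flow.

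\medskip

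The key steps, in order, would be as follows. First, I would set up the decomposition of the velocity field. At each instant the velocity $u$ is divergence-free, tangent to $\partial\Omega$, and matches $u_{\mathcal{S}}^{\ell,r}\cdot n$ on $\partial\mathcal{S}(t)$; its vorticity $\omega = \curl u$ is prescribed. This is a div-curl system on the multiply connected domain $\mathcal{F}^{\ell,r}(t)$, so $u$ is determined by $\omega$, by the boundary data $(\ell,r)$, and by the circulation around the obstacle (which is conserved, by Kelvin's theorem, and fixed by the initial data). I would write $u = u[\omega,\ell,r]$ explicitly via the Biot--Savart-type operator on $\mathcal{F}^{\ell,r}(t)$, together with harmonic correctors carrying the boundary values and the circulation. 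The $L^\infty$ control of $\omega$ then yields, through elliptic/potential-theoretic estimates on the domain, the log-Lipschitz ($\mathcal{LL}$) regularity of $u$ in space, which is precisely the regularity class in the statement. Second, transported along the flow, the vorticity obeys an $L^\infty$-transport equation, so $\|\omega(t)\|_{L^\infty}$ is controlled (bounded, by the transport structure) uniformly in $t$. Third, the solid equations \eqref{Solide1}--\eqref{Solide2}, after rewriting the pressure integrals in terms of $u$ and $\omega$ (integrating the momentum equation and using the boundary conditions to express the force and torque on the body), give an ODE for $(\ell,r)$ whose right-hand side depends on $u[\omega,\ell,r]$ and hence, through the above, on the data in a controlled way.

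\medskip

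With these pieces in place, I would close the argument by a Picard-type fixed-point/iteration on $(\ell,r,\omega)$ in an appropriate space: given $(\ell,r)$ one solves the transport equation for $\omega$ along the flow of $u[\omega,\ell,r]$ (this inner step is itself the classical Yudovich fixed point, using the log-Lipschitz modulus of continuity to guarantee a unique flow via Osgood's criterion), and then feeds the resulting force and torque into the ODE \eqref{Solide1}--\eqref{Solide2} to update $(\ell,r)$. The uniform estimates above show the iterates stay in a fixed ball on a time interval $[0,T]$ with $T$ depending only on the data and on $\dist(\mathcal{S}_0,\partial\Omega)$, and a contraction estimate (in a weaker norm, as is standard for Yudovich-type arguments) gives convergence to a solution with the claimed regularity. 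Finally, for the continuation criterion \eqref{Explosion}, I would argue that as long as the solid stays a fixed distance away from $\partial\Omega$, all the constants in the elliptic estimates on $\mathcal{F}^{\ell,r}(t)$ remain bounded, so the a priori bounds do not degenerate and the solution extends; hence a finite maximal time $T$ forces $\dist(\mathcal{S}(t),\partial\Omega)\to 0$.

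\medskip

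\textbf{Main obstacle.} I expect the principal difficulty to lie in obtaining the elliptic estimates for the Biot--Savart operator with constants that are uniform as the solid moves, and in particular in controlling how the log-Lipschitz modulus and the harmonic correctors depend on the geometry of the moving domain $\mathcal{F}^{\ell,r}(t)$. Keeping these constants bounded in terms of $\dist(\mathcal{S}(t),\partial\Omega)$ is exactly what underlies the continuation criterion \eqref{Explosion}, and it is the place where the coupling between the solid motion and the fluid estimates is most delicate.
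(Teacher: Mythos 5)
Your starting point---pulling the system back to the fixed domain $\mathcal{F}_0$ by the \emph{rigid} change of variables $y = (\tau^{\ell,r}(t))^{-1}(x)$---fails in the setting of this theorem, and this failure is precisely the difficulty that the bounded domain creates. The map $(\tau^{\ell,r}(t))^{-1}$ does send $\mathcal{S}(t)$ back to $\mathcal{S}_0$, but it sends the outer boundary $\partial\Omega$ to $(\tau^{\ell,r}(t))^{-1}(\partial\Omega)$, a rotated and translated copy of $\partial\Omega$ that moves in time; hence the image of the fluid domain $\mathcal{F}(t)=\Omega\setminus\mathcal{S}(t)$ is $(\tau^{\ell,r}(t))^{-1}(\Omega)\setminus\mathcal{S}_0$, which is \emph{not} $\mathcal{F}_0$. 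A rigid change of variables fixes the whole fluid domain only when $\Omega=\R^2$, which is exactly the case the paper distinguishes from the present one. The paper's substitute for this step is the construction of Section~\ref{BasicL} (Proposition~\ref{ProDiffeos}): a family of volume-preserving but \emph{non-rigid} diffeomorphisms $\Psi[\tau]$ of $\overline{\Omega}$, equal to the rigid motion $\tau$ on a neighborhood of $\mathcal{S}_0$ and to the identity near $\partial\Omega$, obtained as the time-one flow of a compactly supported divergence-free vector field. All transport of the vorticity between the moving domain and the reference domain, and all compactness statements, are then made through $\Psi[\tau^{\ell,r}]$. Without this device (or an argument carried out entirely in the moving domain), your reformulation collapses at the first step.

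The second weak point is the closing fixed-point argument. You propose a Picard iteration closed by ``a contraction estimate in a weaker norm'', but Yudovich-type stability estimates are not contractive: the estimate one can actually prove (the paper proves it in Section~\ref{Section:Yudo}, for uniqueness) has the Osgood form $y'\leq C q\, y^{1/q'}$ for every finite $q$, yielding $y(t)\leq (Ct)^q$ and hence $y=0$ for $t<1/C$ only after letting $q\to\infty$; this gives no geometric rate for an iteration, and the coupling with the solid adds an added-mass structure that must be handled as well. The paper avoids this issue entirely by using compactness: Schauder's fixed point theorem is applied twice, first (on bounded vorticities with the $L^{\infty}(0,T;L^{3})$ topology, compactness coming from uniform log-Lipschitz bounds on the velocity and the resulting H\"older bounds on the flows) to solve the fluid problem with a \emph{prescribed} solid motion, and then a second time on the solid velocity $(\ell,r)$, after recasting \eqref{Solide1}--\eqref{Solide2} via the Kirchhoff potentials and inverting the added-mass matrix $\mathcal{M}=\mathcal{M}_1+\mathcal{M}_2$; a continuous-dependence result for the prescribed-motion problem glues the two levels together. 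Your remaining ingredients---conservation of circulation, $L^\infty$ transport of vorticity, elliptic and log-Lipschitz estimates with constants uniform in terms of $\dist(\mathcal{S}(t),\partial\Omega)$, and the continuation criterion derived from them---do match the paper's scheme, and you correctly single out the uniformity of the elliptic constants as a central difficulty; but the two issues above, especially the first, are genuine gaps.
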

Several comments are in order here.
\par
First the notation $\mathcal{LL}({\mathcal F}(t))$ refers to the space of log-Lipschitz functions on ${\mathcal F}(t)$, that is the set of functions $f \in L^{\infty}({\mathcal F}(t))$ such that
\begin{equation} \label{DefLL}
\| f \|_{\mathcal{LL}({\mathcal F}(t))} := \| f\|_{L^{\infty}({\mathcal F}(t))} + \sup_{x\not = y} \frac{|f(x)-f(y)|}{|x-y|(1+ \ln^{-}|x-y|)} < +\infty.
\end{equation}
For a functional space $X$ of functions depending on the variable $x$, the notation $L^\infty (0,T; X(\mathcal{F}(t)))$ or $C([0,T]; X (\mathcal{F}(t)))$ stands for the space of functions defined for each $t$ in the fluid domain ${\mathcal F}(t)$, and which can be extended to functions in $L^\infty (0,T; X(\R^{2}))$ or $C([0,T];X(\R^{2}))$ respectively. In the same spirit, we will make the abuse of notations $[0,T] \times {\mathcal F}(t)$ for $\cup_{t \in [0,T]} \{ t \} \times {\mathcal F}(t)$. \par
The other remark is that the pressure $p$ is uniquely defined, up to a function depending only on time, by  $(\ell,r,u)$
as a function of $L^{\infty}(0,T; H^{1}({\mathcal F}(t)))$  (see Corollary  \ref{CorAPrioriEstimates}). 
In particular this gives a sense to the right hand sides of \eqref{Solide1}-\eqref{Solide2}.
\par
\ \par
In the case when $\Omega= \R^{2}$, the equivalent of Theorem \ref{ThmYudo} (together with the uniqueness in this particular case), was proven in \cite{smallbody}. In this particular situation, one can make a {\it rigid} change of variable to write the system in ${\mathcal F}_{0}$, which simplifies the analysis. \par
We provide in the appendix a proof of Theorem \ref{ThmYudo} in the case considered here where the system occupies a bounded domain.
\par
\ \par
The first main result of this paper is  the following.
\begin{Theorem} \label{UniqYudo}
The above solution is unique in its class.
\end{Theorem}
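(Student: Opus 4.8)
The plan is to run a Yudovich-type stability estimate on the difference of two solutions, adapted to the moving geometry and to the fluid--solid coupling. Let $(\ell_1,r_1,u_1)$ and $(\ell_2,r_2,u_2)$ be two solutions with the same initial data. Since the solid positions differ, the fluid domains ${\mathcal F}^{\ell_1,r_1}(t)$ and ${\mathcal F}^{\ell_2,r_2}(t)$ are distinct, so I cannot subtract $u_1$ and $u_2$ directly. First I would transport both solutions back to the fixed reference domain ${\mathcal F}_0$. For each $i$, I build a time-dependent diffeomorphism $\Psi^i_t:{\mathcal F}_0\to{\mathcal F}^{\ell_i,r_i}(t)$ which coincides with the rigid motion $\tau^{\ell_i,r_i}(t)$ in a neighborhood of the solid and with the identity near $\partial\Omega$, interpolated through a fixed cutoff; I then pull $u_i$ back to a field $v_i$ on ${\mathcal F}_0$ using a divergence-preserving (stream-function / Piola-type) transform, so that each $v_i$ solves a perturbed Euler system on the fixed domain ${\mathcal F}_0$ whose coefficients depend on $\nabla\Psi^i$ and $\partial_t\Psi^i$. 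The pointwise difference of the two diffeomorphisms, and hence of all these coefficients, is controlled by the difference of configurations $(h_1,\theta_1)-(h_2,\theta_2)$, which by \eqref{Eq:xbq} is $O\bigl(\int_0^t(|\ell_1-\ell_2|+|r_1-r_2|)\bigr)$.

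Next I would introduce the quasi-energy
\begin{equation*}
E(t):=\frac12\int_{{\mathcal F}_0}|v_1-v_2|^2\,dx+\frac{m}{2}|\ell_1-\ell_2|^2+\frac{\mathcal J}{2}|r_1-r_2|^2,
\end{equation*}
which vanishes at $t=0$, and differentiate it in time using the transported equations. The decisive structural point is a cancellation: writing $w:=v_1-v_2$, the contribution of the fluid pressures, after integration by parts, produces a boundary integral over $\partial{\mathcal S}_0$ which, thanks to the impermeability conditions \eqref{Euler3}--\eqref{Euler4}, pairs exactly against the boundary forcing of the solid equations \eqref{Solide1}--\eqref{Solide2}. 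This reflects the conservative energy structure of the coupled system, and it is what lets the solid terms be absorbed rather than merely estimated. The transport part of the convective term is harmless because the transported flow is (approximately) divergence free and tangent to $\partial{\mathcal F}_0$; the geometric remainders coming from $\nabla\Psi^i-\Id$ and $\partial_t\Psi^i$ are $O(\int_0^t\sqrt{E})$ by the configuration bound above, hence of Osgood-admissible size.

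The one genuinely dangerous term is the ``stretching'' contribution $\int_{{\mathcal F}_0} w\cdot\bigl((w\cdot\nabla)v_2\bigr)$, which cannot be closed in $H^1$ at this regularity. Here I would use the Yudovich device: estimate it by Hölder as $\leq\|w\|_{L^2}\,\|w\|_{L^{p'}}\,\|\nabla v_2\|_{L^p}$, invoke the two-dimensional Calderón--Zygmund bound $\|\nabla v_2\|_{L^p}\leq Cp\,\|\omega_2\|_{L^\infty}$ (finite by \eqref{TourbillonYudo}) together with the interpolation $\|w\|_{L^{p'}}\leq\|w\|_{L^2}^{\theta}\|w\|_{L^\infty}^{1-\theta}$ using the a priori $L^\infty$ bound on $w$, and then optimize over $p$. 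This produces a differential inequality of the form
\begin{equation*}
\frac{d}{dt}E(t)\leq C\,E(t)\,\Bigl(1+\ln^{-}\!E(t)\Bigr),
\end{equation*}
whose right-hand side satisfies the Osgood condition. Since $E(0)=0$, Osgood's lemma forces $E\equiv 0$ on $[0,T]$; hence $(\ell_1,r_1)=(\ell_2,r_2)$, the two diffeomorphisms coincide, and therefore $u_1=u_2$ and ${\mathcal S}^{\ell_1,r_1}(t)={\mathcal S}^{\ell_2,r_2}(t)$ for all $t$.

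I expect the main obstacle to be the interaction of the very low regularity with the moving domain. Concretely: carrying out the divergence-preserving transport so that no spurious $\div$ source is created, and then showing that every change-of-variables remainder is controlled by $\sqrt{E}$ (not by some higher norm of $w$) while only log-Lipschitz regularity of the velocity and $H^1$ regularity of the pressure are available. Verifying the pressure cancellation rigorously at this regularity, and checking that the boundary terms generated by the non-rigid part of the cutoff in $\Psi^i$ remain of admissible size, is where the careful work lies; everything downstream reduces to the classical Osgood argument.
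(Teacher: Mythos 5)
Your proposal is correct and follows essentially the same route as the paper: the same non-rigid change of variables (rigid near the solid, identity near $\partial\Omega$, built from Proposition \ref{ProDiffeos}), the same difference energy including the solid kinetic terms with the pressure/Newton-law boundary pairing, control of the geometric remainders through the a priori bounds on $\partial_t u$ and $\nabla p$ in $L^2$, and the Yudovich device for the stretching term. The only cosmetic differences are that the paper pulls the second solution back onto the first solution's moving domain ${\mathcal F}_1(t)$ rather than transporting both to ${\mathcal F}_0$, and it concludes by comparison with the explicit solution of $y' = C q\, y^{1/q'}$ and letting $q\to\infty$ instead of invoking Osgood's lemma --- equivalent variants of the same argument.
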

\subsection{The Navier-Stokes case}
We now turn to the case of a viscous fluid. \par
In this case, the fluid equation is the incompressible Navier-Stokes equation and the body evolves according to Newton's law, under the influence of the whole Cauchy stress tensor. The boundary conditions are the usual no-slip condition for the velocity field.
The complete system driving the dynamics reads
\begin{gather}
\label{NS1}
\frac{\partial u}{\partial t}+(u\cdot\nabla)u -  \Delta u + \nabla p =0 \ \text{ for } \ x \in \mathcal{F}(t), \\
\label{NS2}
\div u = 0 \ \text{ for } \  x \in \mathcal{F}(t) ,  \\
\label{NS3}
u =  u_\mathcal{S} \ \text{ for } \ x\in \partial \mathcal{S}  (t),  \\
\label{NS4}
u =  0 \ \text{ for } \ x\in \partial \Omega,  \\
\label{Solide1NS}
m  h'' (t) = - \int_{ \partial \mathcal{S} (t)} \T n \, d \sigma ,  \\
\label{Solide2NS}
\mathcal{J}  \theta'' (t) =   - \int_{ \partial   \mathcal{S} (t)} \T  n \cdot (x-  h (t) )^\perp \, d \sigma,
\end{gather}
where the same notations for $x$, $d \sigma$ and  $h$ are used as in the previous paragraph, and where
\begin{equation*}
\T(u,p)  := - p \Id +  2 D u \ \text{ with } Du := \frac{1}{2} (\nabla u + \nabla u^{T}). 
\end{equation*}
For this system, one can prove the existence of a weak solution ``\`a la Leray'' \cite{LerayActa,LerayJMPA}, for which the main assumption is that the initial velocity $u_{0}$ is square-integrable.
To define more precisely what we mean by a weak solution of \eqref{NS1}-\eqref{Solide2NS}, let us define a velocity field globally on $\Omega$ by setting 
\begin{equation} \label{globally}
\overline{u} (t,x) := u(t,x) \ \text{ for } \ x \in {\mathcal F}(t) 
\ \text{ and } \ 
\overline{u} (t,x) := u_\mathcal{S} (t,x) \ \text{ for } \ x \in {\mathcal S}(t),
\end{equation}
where $u_\mathcal{S}$ is given by \eqref{vietendue}. 
We will say that $\overline{u}$ is {\it compatible} with $(\ell,r)$ when $\overline{u}(t,\cdot)$ belongs to $H^{1}(\Omega)$ for almost every $t$ and \eqref{globally} holds with ${\mathcal F}(t)={\mathcal F}^{\ell,r}(t)$ and $u_\mathcal{S}$ is given by \eqref{vietendue}.
Similarly, for the initial data, we define a velocity field $\overline{u}_0$ by setting 
\begin{equation*}
\overline{u}_0 (x) := u_0 (x) \ \text{ for } \ x\in {\mathcal F}_0 \ \text{ and } \ \overline{u}_0 (x) := \ell_0 + r_0 (x-h_{0})^{\perp} \ \text{ for } \ x \in {\mathcal S}_0.
\end{equation*}
Now to define the notion of weak solutions that we consider, it will be useful to introduce the density inside the solid at initial time $t=0$ as the function $\rho_{{\mathcal S}_{0}} (x) $, for $x \in \mathcal{S}_0$. Accordingly, the mass and the inertia of the solid satisfy
\begin{equation*}
m = \int_{{\mathcal S}_{0}} \rho_{{\mathcal S}_{0}}(x) \, dx
\ \text{ and } \ 
{\mathcal J} = \int_{{\mathcal S}_{0}} \rho_{{\mathcal S}_{0}}(x) |x - h_{0}|^{2} \, dx.
\end{equation*}
We extend this initial density as a function on the whole domain $\Omega$ by setting
\begin{equation} \label{Eq:Rho0}
\rho_{0}(x) = \rho_{{\mathcal S}_{0}} \text{ in } {\mathcal S}_{0}
\ \text{ and } \ 
\rho_{0}(x) = 1 \text{ in } {\mathcal F}_{0}.
\end{equation}
Given a rigid movement $(\ell,r)$, we define the solid density as:
\begin{equation} \label{Eq:RhoS}
\rho_{{\mathcal S}}(t,x) = \rho_{{\mathcal S}_{0}} ( (\tau^{\ell,r} (t,\cdot))^{-1}  (x) ) \text{ in } {\mathcal S}^{\ell,r}(t)
\ \text{ and } \ 
\rho_{{\mathcal S}(t)}(x) = 0 \text{ in } {\mathcal F}^{\ell,r}(t),
\end{equation}
and the density $\rho(t,x)$ in $[0,T] \times \Omega$ as 
\begin{equation} \label{Eq:Rho}
\rho (t,x) = \rho_{{\mathcal S}}(t,x) \text{ in } {\mathcal S}^{\ell,r}(t)
\ \text{ and } \ 
\rho(x) = 1 \text{ in } {\mathcal F}^{\ell,r}(t).
\end{equation}
\begin{Definition}[see \cite{GLS,DE-CPDE,Conca1,SMST,FE1}] \label{DefSolusLeray}
We say that 
\begin{equation*}
(\ell,r,\overline{u}) \in  C^0 ([0,T]; \R^2 \times \R) \times [L^{\infty} (0,T; L^2 (\Omega)) \cap L^2 (0,T; H^{1}(\Omega))]
\end{equation*}
is a weak solution of \eqref{NS1}-\eqref{Solide2NS} with the initial data \eqref{Solideci}-\eqref{Eulerci2} if $\overline{u}$ is divergence free, 
\begin{equation} \label{Compatibilite}
\overline{u} \ \text{  is compatible with } (\ell,r),
\end{equation}
and for any divergence free vector field $\phi \in C^\infty_{c} ([0,T] \times \Omega ;\R^2 )$ such that $D\phi (t,x)= 0 $ when $t \in [0,T]$ and $x \in \mathcal{S}^{\ell,r} (t)$, there holds, when $\rho$ is given by \eqref{Eq:Rho}:
\begin{equation} \label{weakformulation}
\int_{\Omega }  \rho_{0}  \overline{u}_{0} \cdot  \phi \vert_{t=0} - \int_{\Omega }  (\rho  \overline{u} \cdot  \phi )\vert_{t=T}
+ \int_{(0,T) \times \Omega } \rho  \overline{u} \cdot \frac{\partial \phi}{\partial t}   +  (\overline{u}\otimes \overline{u}  -  2 D\overline{u} )  :  D\phi =0 .
\end{equation}
We will also say that $(\ell,r,u) \in  C^0 ([0,T]; \R^2 \times \R) \times [L^{\infty} (0,T; L^2 ( {\mathcal F}^{\ell,r}(t) )) \cap L^2 (0,T; H^{1}( {\mathcal F}^{\ell,r}(t) ))]$ is a solution when $(\ell,r,\overline{u})$ with $\overline{u}$ defined by \eqref{globally} is a solution. \par
\end{Definition}
\ \par
Now we have the following existence theorem of weak solutions.
\begin{Theorem}[see \cite{GLS,DE-CPDE,Conca1,SMST,FE1}] 
\label{ThmLeray}
For any $u_0 \in L^{2} ({\mathcal{F}_0};\R^{2})$ and $(\ell_0 , r_0) \in \R^2 \times \R$ satisfying  \eqref{CondCompatibilite},
for any  $T>0$,  there exists a weak solution 
\begin{equation*}
(\ell,r,\overline{u}) \in  C^0 ([0,T]; \R^2 \times \R) \times [C ([0,T]; L^2 (\Omega)) \cap L^2 (0,T; H^{1}(\Omega))],
\end{equation*}
of \eqref{NS1}-\eqref{Solide2NS} with the initial data \eqref{Solideci}-\eqref{Eulerci2}.
Moreover, for any $t\in [0,T]$, 
\begin{equation} \label{EE}
\frac{1}{2}\int_{\Omega } \rho(t,\cdot)  |\overline{u} (t,\cdot)|^{2} \, dx
+ 2 \int_{(0,t) \times \Omega } \rho(s,x) \, D\overline{u}(s,x):D\overline{u}(s,x) \, dx \, ds
= \frac{1}{2} \int_{\Omega }  \rho_{0}(x) |\overline{u}_{0}(x)|^2 \, dx.
\end{equation}
\end{Theorem}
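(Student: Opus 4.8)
The plan is to obtain the weak solution as the limit of a regularized problem that preserves the energy structure, and then to pass to the limit by a compactness argument. Since the admissible test functions in \eqref{weakformulation} are required to be rigid on the \emph{a priori unknown} solid region $\mathcal{S}^{\ell,r}(t)$, the two-way coupling between the solid motion and the velocity field is the source of all the difficulty. I would therefore not attempt to decouple the fluid and the solid, but rather work throughout with the global field $\overline{u}\in H^{1}(\Omega)$, letting the choice of rigid test functions encode Newton's law automatically.

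First I would construct approximate solutions $(\ell_{n},r_{n},\overline{u}_{n})$ by penalizing the rigidity constraint: one replaces the rigidity of the solid by a large viscosity of size $n$ supported in the transported solid region, so that the approximate system becomes a variable-density, variable-viscosity incompressible Navier--Stokes system posed on all of $\Omega$. Existence for such a system is classical, via a Galerkin scheme in a time-independent basis of divergence-free fields together with the DiPerna--Lions theory for the linear transport of the density $\rho_{n}$ by the divergence-free field $\overline{u}_{n}$; at this stage one records the approximate energy balance, which is the analogue of \eqref{EE} with an additional nonnegative penalization term. The energy balance yields bounds on $\overline{u}_{n}$ in $L^{\infty}(0,T;L^{2}(\Omega))\cap L^{2}(0,T;H^{1}(\Omega))$ uniform in $n$, while the penalization term forces $\|D\overline{u}_{n}\|_{L^{2}}$ on the solid to tend to zero. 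The solid velocities $(\ell_{n},r_{n})$ are controlled by the $L^{2}$ norm of $\overline{u}_{n}$ on the solid, and the momentum balance bounds their time derivatives, so that $(\ell_{n},r_{n})$ is equicontinuous; by Arzel\`a--Ascoli the rigid motions, hence the characteristic functions $\chi_{\mathcal{S}_{n}(t)}$ and the densities $\rho_{n}$, converge strongly. The decisive compactness is the strong $L^{2}((0,T)\times\Omega)$ convergence of $\overline{u}_{n}$, which I would obtain from an Aubin--Lions argument after bounding $\partial_{t}(\rho_{n}\overline{u}_{n})$ in $L^{2}(0,T;(H^{s})')$ for some $s>0$ using the momentum equation; this is exactly what is needed to pass to the limit in the quadratic term $\overline{u}_{n}\otimes\overline{u}_{n}$.

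Passing to the limit in \eqref{weakformulation} is the step I expect to be the main obstacle, for two reasons. First, a test function that is rigid on the limit solid $\mathcal{S}^{\ell,r}(t)$ is \emph{not} rigid on the approximate solids $\mathcal{S}_{n}(t)$, so a fixed admissible $\phi$ must be corrected into a sequence $\phi_{n}$ that is admissible for the $n$-th problem and converges strongly enough in $H^{1}$; the equicontinuity of the rigid motions, which makes $\mathcal{S}_{n}(t)$ close to $\mathcal{S}^{\ell,r}(t)$ uniformly in $t$, is what makes this construction possible. Second, I must verify that the limit $\overline{u}$ inherits the rigidity $D\overline{u}=0$ on $\mathcal{S}^{\ell,r}(t)$, which follows from the vanishing of the penalization term combined with the strong convergence of $\chi_{\mathcal{S}_{n}}$. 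Once the limit field is known to be compatible with $(\ell,r)$ in the sense of \eqref{Compatibilite} and to satisfy the weak formulation, $(\ell,r,\overline{u})$ is the desired solution.

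Finally, to upgrade the energy inequality (obtained by lower semicontinuity in the limit) to the equality \eqref{EE}, I would exploit the two-dimensional setting. The limit solution lies in $L^{\infty}(0,T;L^{2})\cap L^{2}(0,T;H^{1})$, and by the Ladyzhenskaya inequality $H^{1}(\Omega)\hookrightarrow L^{4}(\Omega)$, so that $\overline{u}\otimes\overline{u}$ is integrable against $D\overline{u}$; this regularity is enough to justify, after a mollification in time and a regularization of the rigidity constraint, using $\overline{u}$ itself as a test function in \eqref{weakformulation}. The transport structure of $\rho$, again via DiPerna--Lions, guarantees that the density contributions assemble into the exact kinetic energy and that the convective term integrates to zero, which yields the equality rather than merely an inequality.
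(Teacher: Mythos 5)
The paper does not actually prove Theorem \ref{ThmLeray}: it is imported from the literature (see the citations \cite{GLS,DE-CPDE,Conca1,SMST,FE1}, with the subsequent remarks attributing the existence proof to \cite{hs,SMST} and describing the energy identity \eqref{EE} as folklore, provable as for a fluid alone following \cite[p.~87]{lions}). Your sketch --- penalization of the rigidity constraint by a large viscosity in the solid region, uniform energy bounds, Aubin--Lions compactness, correction of the rigid test functions to make them admissible for the approximate problems, recovery of $D\overline{u}=0$ on the limit solid from the vanishing penalization, and the two-dimensional (Ladyzhenskaya $L^4$) upgrade from energy inequality to the equality \eqref{EE} --- is precisely the strategy of those cited proofs, so your proposal is correct in outline and takes essentially the same approach as the one the paper relies on.
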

\begin{Remark}
The proof of the existence of such weak solutions can be found in \cite{hs,SMST}. Note in particular that the function $\rho_{{\mathcal S}}$ defined by $\rho_{{\mathcal S}}(t,x) = \rho_0 ( (\tau^{\ell,r} (t,\cdot))^{-1}(x) )$ in ${\mathcal S}^{\ell,r}(t)$ and $\rho_{{\mathcal S}}(t,x) = 0$ in  ${\mathcal F}^{\ell,r}(t)$ is a weak solution of
\begin{equation} \label{Eq:TransportRho}
\left\{ \begin{array}{l}
 \partial_{t} \rho_{{\mathcal S}} + \div (\rho_{{\mathcal S}} \overline{u}) =0 \ \text{ in } (0,T) \times \Omega, \\
 \rho_{{\mathcal S}}(0,\cdot) = \rho_{{\mathcal S}_{0}} \text{ in } {\mathcal S}_{0} \ \text{ and } \ \rho_{{\mathcal S}}(0,\cdot) = 0 \text{ in } {\mathcal F}_{0},
\end{array} \right.
\end{equation}
and hence the unique solution of this system (see \cite[Corollary II.1]{DiPernaLions}). \par
We notice that the notion of weak solutions can be slightly different. In particular, \cite{DE-CPDE} does not express the solid movement by \eqref{Compatibilite} or $\rho$ by \eqref{Eq:Rho}, but as follows. The solid density $\rho_{{\mathcal S}}$ is obtained as the solution of \eqref{Eq:TransportRho} and then the compatibility condition in \cite{DE-CPDE} reads: $\overline{u}(t,\cdot)$ belongs to $H^{1}(\Omega)$ for almost every $t$ and
\begin{equation*}
\rho_{{\mathcal S}} D \overline{u} =0.
\end{equation*}
Due to the lack of regularity of $\overline{u}$, we do not know if this compatibility condition is sufficient to ensure \eqref{Compatibilite} (see also the discussion in \cite[Section 3]{FE3}). \par
\end{Remark}
\begin{Remark}
The energy identity \eqref{EE} belongs to the  folklore in the subject and can be proved proceeding as in the case of a fluid alone, see for instance \cite[p. 87]{lions}.
The strong continuity in time of $\overline{u}$ in $L^{2}(\Omega)$ is then a direct consequence of \eqref{EE}.
\end{Remark}
Let us add a few words on previous references. In the case when $\Omega= \R^{2}$, one can again use a rigid change of variables to prove the existence and uniqueness of such solutions cf.  \cite{Judakov,Serre,TakTuc}. In the case considered here where $\Omega$ is bounded, this is no longer possible; we refer here to \cite{GLS,DE-CPDE,Conca1,SMST} which establish the existence of  solutions ``\`a la Leray'' as stated in Theorem \ref{ThmLeray}.
Let us also mention the recent works  \cite{FE1,FE2} which establish the existence of  solutions ``\`a la Leray''  in three dimensions and the papers \cite{GM,DE2,Tak} where the existence and uniqueness of strong solutions for short times were studied, including in the three-dimensional case. \par
\ \par
The second main result of this paper states  that the solution given by Theorem \ref{ThmLeray} is unique as long as there is no collision. 
\begin{Theorem} \label{UniqLeray}
Let $T>0$ and $(\ell,r,u)$ be as in Theorem \ref{ThmLeray}. 
Assume that for any $t \in [0,T]$, $\dist ({\mathcal S}(t), \partial \Omega) ) > 0$.
Let $(\tilde{\ell},\tilde{r},\tilde{u}) $ be another weak solution of \eqref{NS1}-\eqref{Solide2NS} on $[0,T]$ with the same initial data. 
Then $(\tilde{\ell},\tilde{r},\tilde{u}) = (\ell,r,u)$.
\end{Theorem}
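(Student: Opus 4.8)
The central difficulty is that the two solutions are a priori supported on \emph{different} fluid domains, $\mathcal{F}^{\ell,r}(t)$ and $\mathcal{F}^{\tilde\ell,\tilde r}(t)$, so one cannot directly form the difference $\overline{u}-\tilde u$ and run a Gronwall argument. The plan is therefore: (i) transport the second solution onto the geometry of the first by a well-chosen change of variables; (ii) write the system satisfied by the difference on the common (rigid in shape but still moving) fluid domain; (iii) close a two-dimensional energy estimate using Ladyzhenskaya's interpolation inequality and Gronwall's lemma, with every error term produced by the change of variables controlled by the quantities one is estimating.

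Step 1 (change of variables). Since $\dist(\mathcal{S}(t),\partial\Omega)>0$ uniformly on the compact $[0,T]$, there is room to build, for each $t$, a diffeomorphism $\Phi_t:\Omega\to\Omega$ that coincides with the rigid map $\tau^{\tilde\ell,\tilde r}(t)\circ(\tau^{\ell,r}(t))^{-1}$ on a neighborhood of $\mathcal{S}^{\ell,r}(t)$ (so that it sends $\mathcal{S}^{\ell,r}(t)$ rigidly onto $\mathcal{S}^{\tilde\ell,\tilde r}(t)$) and equals the identity near $\partial\Omega$, interpolating in the collar in between. Crucially I would arrange $\Phi_t$ to be \emph{volume preserving}, so that the push-forward of a divergence-free field stays divergence free and the admissible test functions of Definition \ref{DefSolusLeray} are mapped to admissible ones; in two dimensions this can be done by prescribing $\Phi_t$ through a stream-function flow. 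The no-collision hypothesis yields a uniform lower bound on the collar width, hence uniform bounds on $\Phi_t$, $\Phi_t^{-1}$ and their derivatives in terms of the geometric discrepancies $|\ell-\tilde\ell|$, $|r-\tilde r|$, $|h-\tilde h|$, $|\theta-\tilde\theta|$.

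Step 2 (transported system and difference). Setting $v:=(\Phi_t)_*\tilde u$, the push-forward of the transported solution, one obtains a field that is rigid on $\mathcal{S}^{\ell,r}(t)$ and solves a Navier--Stokes system on $\Omega$ in which the differential operators are conjugated by $\Phi_t$, producing variable coefficients $a(t,x)=\Id+O(|\ell-\tilde\ell|+|r-\tilde r|+|h-\tilde h|+|\theta-\tilde\theta|)$ together with extra lower-order terms. Both $\overline{u}$ and $v$ now live on the same moving geometry, the weak formulation \eqref{weakformulation} for each can be written against the same divergence-free, solid-rigid test functions, and the pressure disappears exactly as in Definition \ref{DefSolusLeray}. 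Subtracting gives the equation for $w:=\overline{u}-v$, whose right-hand side collects the bilinear convective term, the viscous commutators from the non-constant coefficients, and the $\partial_t\Phi_t$-contribution of the transport, each multiplied by a factor measuring the geometric discrepancy.

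Step 3 (energy estimate and the main obstacle). The natural quantity is $E(t)=\tfrac12\int_\Omega \rho_0\,|w|^2$, which controls \emph{simultaneously} the fluid kinetic energy and, through the rigidity constraint on the solid, the solid velocity differences $|\ell-\tilde\ell|^2+|r-\tilde r|^2\le C\,E$. Testing the difference equation against $w$ and using the two-dimensional inequality $\|f\|_{L^4}^2\le C\|f\|_{L^2}\|\nabla f\|_{L^2}$ to absorb the convective term $\int(w\cdot\nabla)v\cdot w$ into $\tfrac12\|\nabla w\|_{L^2}^2+C\|v\|_{H^1}^2 E$, while bounding the geometric error terms by $C(t)\bigl(E+|h-\tilde h|^2+|\theta-\tilde\theta|^2\bigr)$ with $C\in L^1(0,T)$, yields $\tfrac{d}{dt}\mathcal E\le C(t)\,\mathcal E$ for the augmented energy $\mathcal E:=E+|h-\tilde h|^2+|\theta-\tilde\theta|^2$; since $h-\tilde h$ and $\theta-\tilde\theta$ are the time-integrals of $\ell-\tilde\ell$ and $r-\tilde r$, Gronwall with $\mathcal E(0)=0$ forces $w\equiv0$ and $(\ell,r)=(\tilde\ell,\tilde r)$. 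The hardest point is \emph{rigour at low regularity}: the solutions are only $L^\infty L^2\cap L^2 H^1$, so $w$ is not literally an admissible test function and the above is formal. I would legitimise it by exploiting that in two dimensions the energy equality \eqref{EE} together with Ladyzhenskaya's inequality give $\overline{u},\tilde u\in L^4((0,T)\times\Omega)$, which upgrades the weak formulation enough to test one solution against the other in the Serrin spirit, and by regularising the test functions in time before passing to the limit to handle the moving-domain time derivative. Keeping the viscous commutators and the $\partial_t\Phi_t$ contributions in $L^1_t$ --- using only the uniform geometric bounds from Step 1 --- is the delicate bookkeeping that makes the Gronwall estimate close.
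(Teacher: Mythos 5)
Your Steps 1 and 2 coincide with the paper's strategy: the authors also build a volume-preserving diffeomorphism equal to the rigid motion $\tau_2\circ\tau_1^{-1}$ near the solid and to the identity near $\partial\Omega$ (their Proposition \ref{ProDiffeos} and Corollary \ref{CorEstPsi}), pull back the second solution onto the first geometry, and close with a Ladyzhenskaya-plus-Gronwall energy estimate on the augmented quantity $\|\hat u\|_{L^2}^2+|\hat\ell|^2+|\hat r|^2+|\hat h|^2+|\hat\theta|^2$, exactly the skeleton you describe. The gap is in your Step 3, and it is not a matter of ``delicate bookkeeping'': it is the main mathematical content of the paper, which your Serrin-style substitute does not supply. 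After the change of variables, the equation for the difference carries error terms (the paper's $\tilde f$, see \eqref{NSDiff} and Lemma \ref{Lem:EstRHS}) that involve $\partial_t\tilde u_2$, $\nabla^2\tilde u_2$ and $\nabla\tilde p_2$ --- quantities that simply do not exist for a bare Leray solution in $L^\infty L^2\cap L^2H^1$. The paper's answer is its entire Section \ref{Section:PrioriLeray}: any weak solution, as long as no collision occurs, enjoys the hidden parabolic regularity $t u\in L^{4/3}(0,T;W^{2,4/3})$, $t\partial_t u,\ t\nabla p\in L^{4/3}$ (Proposition \ref{PropAPrioriEstimatesNS}), proved by showing $(t\ell,tr,tu)$ is a weak solution of a \emph{linear} auxiliary fluid--solid Stokes system, invoking maximal $L^{4/3}$ regularity for that system (adapting Geissert--G\"otze--Hieber), and proving weak--strong uniqueness for the linear system --- which is tractable precisely because there both solutions share the \emph{same prescribed} geometry. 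Only with this gain do the terms $\tilde f_4,\tilde f_5$ become estimable (via the $L^1$ function $\mathcal B(t)$ built from the weighted $L^{4/3}$ norms), and only then is multiplying the difference equation by $\hat u$ a legitimate $L^{4/3}$--$L^{4}$ duality pairing.

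Your proposed alternative --- keep everything in weak form so the pressure never appears, then ``test one solution against the other in the Serrin spirit'' after time regularisation --- runs into an obstruction specific to this problem and left unresolved in your sketch: the admissible test functions must satisfy the rigidity constraint $D\phi=0$ on the \emph{moving} solid $\mathcal S_1(t)$, and time-mollification of $\hat u$ destroys this constraint (a time average of fields rigid on different domains $\mathcal S_1(s)$ is rigid on none of them). This is exactly why, as the paper recalls, uniqueness of weak solutions for this system was stated as an open problem even in 2D: if the classical Lions--Prodi/Serrin argument transferred with only $L^4$ space-time integrability and the energy equality, the question would not have been open. A correct proof must either carry out and control the constraint-restoring correction of the mollified test function (a substantial piece of analysis you do not sketch), or, as the paper does, first upgrade the regularity of the weak solution so that the strong form of the difference equation is available. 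Two smaller inaccuracies in the same step: your energy should be weighted by the transported density $\rho(t,\cdot)$, not $\rho_0$; and your claim that the commutator terms are controlled ``using only the uniform geometric bounds from Step 1'' is false --- the paper's $\mathcal B(t)\in L^1(0,T)$ requires the weighted maximal-regularity norms of Proposition \ref{PropAPrioriEstimatesNS}, not merely bounds on the diffeomorphism and its derivatives.
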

This result extends the one in  \cite{Tak} where it was assumed in addition that the initial fluid velocity is  in the Sobolev space $H^1$.
Therein it was mentioned that 
``uniqueness of weak solutions is an open question, even in the two-dimensional case.''
This issue  was also mentioned recently in the conclusion of the paper \cite{DR}.
Theorem \ref{UniqLeray} therefore brings an answer to this issue, as long as there is no collision. \par
It is not known in general whether or not a collision may happen.
However the possibility of a collision is excluded in some particular cases by the results in \cite{Hillairet,Hesla}, see also the recent work  \cite{DGVH} about the influence of the boundary regularity. 
On the other hand the results of  \cite{hs,Staro} prove that such weak solutions cannot be unique if a collision occurs.
\subsection{Structure of the paper}
To simplify the notations and without loss of generality, we will suppose that 
\begin{equation*}
h_{0}=0.
\end{equation*}
The paper is organized as follows. In Section \ref{BasicL} we establish a preliminary result on a class of changes of variables associated to a rigid  motion. This result will be useful for the proof of Theorem  \ref{UniqYudo} and  for the one of Theorem  \ref{UniqLeray} as well.
Then  we proceed with the proofs of these theorems respectively in Section  \ref{Section:Yudo} and in Section \ref{Section:Leray}.
The structure of these sections is quite the same: we will start by giving some a-priori estimates satisfied by any solution, respectively in the Sections \ref{Section:PrioriYudo} and \ref{Section:PrioriLeray}, and then we prove the uniqueness, in the Sections \ref{section:UniqYudo} and \ref{section:UniqLeray}.
Finally, in the appendix, we prove Theorem \ref{ThmYudo}. \par
\section{A basic lemma}
\label{BasicL}
Given $A \subset \R^{2}$ and $\delta>0$, we denote
\begin{equation*}
{\mathcal V}_{\delta}(A) := \Big\{ x \in \R^{2} \ \Big/ \ \dist (x,A) \leq \delta \Big\}.
\end{equation*}
We rely on the following proposition. \par
%
%
\begin{Proposition} \label{ProDiffeos}
Let $\Omega$ and ${\mathcal S}_{0}$ be fixed as previously.
There exist a compact neighborhood $U$ of $\Id$ in $SE(2)$, $\delta>0$ and $\Psi \in C^{\infty}(U;\mbox{Diff}(\overline{\Omega}))$ such that $\Psi[\Id]=\Id$ and that for all $\tau \in U$,
\begin{gather} 
\label{PsiVolumePreserving}
\Psi[\tau] \text{ is volume-preserving}, \\
\label{EqPsiTau}
	\Psi[\tau] (x) = \tau(x)  \text{ on } {\mathcal V}_{\delta}({\mathcal S}_{0}) \ \text{ and } \ 
	\Psi[\tau] (x) = x  \text{ on } {\mathcal V}_{\delta}(\partial \Omega) \cap \overline{\Omega}.
\end{gather}
\end{Proposition}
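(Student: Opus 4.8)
The goal is to construct, for each rigid transformation $\tau$ near the identity, a volume-preserving diffeomorphism $\Psi[\tau]$ of $\overline{\Omega}$ that coincides with $\tau$ on a neighborhood of the solid and with the identity near the outer boundary $\partial \Omega$, with smooth dependence on $\tau$.

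\medskip

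The plan is to interpolate between the rigid motion $\tau$ near ${\mathcal S}_0$ and the identity near $\partial \Omega$ by means of a cutoff, and then to correct the resulting map so that it becomes exactly volume-preserving. First I would fix a smooth cutoff function $\chi \in C^\infty(\overline{\Omega};[0,1])$ that equals $1$ on ${\mathcal V}_{2\delta}({\mathcal S}_0)$ and vanishes on ${\mathcal V}_{2\delta}(\partial\Omega)\cap\overline\Omega$, which is possible provided $\delta$ is small enough that these two neighborhoods are disjoint (here one uses $\dist({\mathcal S}_0,\partial\Omega)>0$). A first naive candidate would be $x \mapsto \chi(x)\tau(x) + (1-\chi(x))x$, but this is not volume-preserving and, more conveniently, I would instead build the map via a time-one flow so that the volume-preserving correction is transparent. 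Writing $\tau = \tau_\xi$ for $\xi \in SE(2)$ near $\Id$, one parametrizes the one-parameter family of rigid motions $s \mapsto \tau_{s\xi}$ joining $\Id$ to $\tau$; each such rigid motion is generated by a divergence-free (indeed affine) vector field $V_\xi(x) = a + b\,x^\perp$ depending linearly on the infinitesimal generator. The cutoff field $\chi(x)V_\xi(x)$ is no longer divergence-free, so I would instead cut off at the level of the stream function.

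\medskip

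Concretely, since $V_\xi$ is divergence-free on the simply connected region of interest, write $V_\xi = \nabla^\perp \psi_\xi$ for a stream function $\psi_\xi$ depending smoothly and linearly on $\xi$; then define the truncated field $W_\xi := \nabla^\perp(\chi\,\psi_\xi)$, which is divergence-free by construction, equals $V_\xi$ where $\chi\equiv 1$ (near ${\mathcal S}_0$), and vanishes where $\chi\equiv 0$ (near $\partial\Omega$), and is tangent to $\partial\Omega$. Letting $\Psi[\tau]$ be the time-one flow of the time-dependent (or suitably rescaled autonomous) divergence-free field associated with $W$, the flow of a divergence-free field is automatically volume-preserving, giving \eqref{PsiVolumePreserving}; it restricts to the rigid motion $\tau$ on ${\mathcal V}_\delta({\mathcal S}_0)$ and to the identity on ${\mathcal V}_\delta(\partial\Omega)\cap\overline\Omega$, giving \eqref{EqPsiTau}; and it is a diffeomorphism of $\overline\Omega$ for $\xi$ in a sufficiently small compact neighborhood $U$ of $\Id$, by continuity of the flow in its data and the fact that the flow of the zero field ($\xi=\Id$) is the identity. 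The smooth dependence $\Psi \in C^\infty(U;\mbox{Diff}(\overline\Omega))$ follows from smooth dependence of solutions of ODEs on parameters, using that $\xi \mapsto W_\xi$ is smooth.

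\medskip

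The main obstacle is ensuring that the flow matches $\tau$ exactly (not just approximately) on ${\mathcal V}_\delta({\mathcal S}_0)$: because $W_\xi = V_\xi$ only where $\chi\equiv 1$, one must check that the trajectories starting in ${\mathcal V}_\delta({\mathcal S}_0)$ stay inside the region $\{\chi = 1\} \supset {\mathcal V}_{2\delta}({\mathcal S}_0)$ for all of the flow time, so that along those trajectories the truncated field genuinely coincides with the rigid generator and the endpoint is $\tau(x)$. This is where shrinking $U$ is essential: by choosing $\xi$ small the total displacement is small, keeping trajectories started in ${\mathcal V}_\delta({\mathcal S}_0)$ within ${\mathcal V}_{2\delta}({\mathcal S}_0)$, and likewise keeping the map invertible and into $\overline\Omega$. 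A secondary technical point is the construction of the stream function $\psi_\xi$ with the required linear dependence on $\xi$ on the relevant annular region; this is routine since the explicit generator $V_\xi(x)=a+b\,x^\perp$ admits the explicit stream function $\psi_\xi(x) = a^\perp\!\cdot x + \tfrac{b}{2}|x|^2$, so the only care needed is the smoothness and the compatibility of the cutoff.
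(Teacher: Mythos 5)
Your proposal is correct and takes essentially the same route as the paper: the paper likewise defines $\Psi[\tau]$ as the time-one flow of the divergence-free field $\nabla^{\perp}\big(\text{cutoff}\times\text{stream function of the rigid velocity}\big)$, so that volume preservation comes from $\div V=0$, the matching with $\tau$ near ${\mathcal S}_{0}$ from the field equalling the rigid generator there, the identity near $\partial\Omega$ from the vanishing of the cutoff, and smoothness in $\tau$ from smooth dependence of ODE flows on parameters. The only (harmless) differences are that the paper transports the cutoff with the moving body, so confinement of the rigid trajectories in the region where the cutoff equals $1$ holds automatically because rigid motions are isometries, whereas you keep a fixed cutoff and obtain confinement by shrinking $U$ — both arguments work — and that your explicit stream function has a sign slip: with $x^{\perp}=(-x_{2},x_{1})$ and $\nabla^{\perp}f=(-\partial_{2}f,\partial_{1}f)$ it should read $x^{\perp}\cdot a+\tfrac{b}{2}|x|^{2}$ (i.e. $-a^{\perp}\cdot x+\tfrac{b}{2}|x|^{2}$), not $a^{\perp}\cdot x+\tfrac{b}{2}|x|^{2}$.
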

Above, $\mbox{Diff}(\overline{\Omega})$ denotes the set of $C^{\infty}$-diffeomorphisms of $\overline{\Omega}$. 
\begin{proof}
The proof is similar to \cite[Lemma 1]{GSgeodesic}. First we use that the exponential map $\exp : se(2) \rightarrow SE(2)$ is locally a diffeomorphism near the origin of $se(2)$, say on a neighborhood ${\mathcal U} \subset SE(2)$ of $\Id_{\R^{2}}$.  Here $se(2)$ is the Lie algebra associated to the Lie group $SE(2)$. This exponential map on the space $se(2) \sim \R^{2} \times \R$ can be represented as the map which associates to $(\ell,r) \in \R^{2} \times \R$ the value at time $1$ of the solution $\tau(t)$ of following ODE in $SE(2)$:
\begin{equation} \label{EDOtau}
\frac{d}{dt} \tau(t, x)  = \ell + r \, (\tau(t, x) - \tau(t, 0))^{\perp}  = \ell + r \, (\tau(t, x) - t \ell)^{\perp} \ \text{ with } \tau(0,\cdot)=\Id_{\R^{2}}.
\end{equation}
Reducing ${\mathcal U}$ if necessary, we find $U$ as a compact neighborhood of $\Id$ in $SE(2)$ on which $\ln$ is a diffeomorphism, which contains all the intermediary states $\tau(t,\cdot)$ leading to $\tau(1)=\tau$ when $\tau \in U$ and for which holds for some $\delta>0$
\begin{equation*}
\max \Big\{ |\tau(x)- x|, \ x \in {\mathcal S}_{0}, \ \tau \in U \Big\} \leq \delta \ \text{ and } \ 
\max \Big\{ \dist (\tau({\mathcal S}_{0}), \partial \Omega) , \ \tau \in U \Big\} \geq 3 \delta.
\end{equation*}
Now given $\tau \in U$, we hence associate $(\ell,r):= \ln(\tau)$ and the corresponding time-dependent $\tau(t,x)$.
Let $\phi(t,x)$ a smooth function equal, for each $t \in [0,1]$, to $1$ in ${\mathcal V}_{\delta}(\tau(t,{\mathcal S}_{0}))$ and to $0$ outside of ${\mathcal V}_{2\delta}(\tau(t,{\mathcal S}_{0}))$.
We define the following time-dependent vector field on $\R^{2}$:
\begin{equation*}
V_{\tau}(t,x) := \nabla^{\perp} \left( \phi(x) \left( x^{\perp} \cdot \ell + \frac{|x - t \ell|^{2}}{2} r \right) \right).
\end{equation*}
Note that
\begin{equation} \label{ValV}
V_{\tau}(t,x) = \ell + r \, (x-t \ell)^{\perp} \text{ in } {\mathcal V}_{\delta}({\mathcal S}_{0}) \ \text{ and } \ 
V_{\tau}(t,x) = 0 \text{ in } {\mathcal V}_{\delta}(\partial \Omega).
\end{equation}
We define $\Psi \in \mbox{Diff}(\overline{\Omega})$ as the value at $t=1$ of the flow associated to $V$, that is
\begin{equation*}
\Psi[\tau] := \gamma(1,\cdot),
\end{equation*}
where $\gamma(t,x)$ the solution of the ODE:
\begin{equation*}
\frac{d}{dt} \gamma(t,x)  = V(t,\gamma(t,x)) \ \text{ with } \ \gamma(0,\cdot)=\Id_{\overline{\Omega}}.
\end{equation*}
It is straightforward to see that $\gamma$ is a smooth function of $V$ and hence that $\Psi$ is a smooth function of $\tau$. Also, \eqref{EqPsiTau} follows from \eqref{ValV}. Finally \eqref{PsiVolumePreserving} follows from $\div V=0$.
\end{proof}
We have the next corollary, where we consider $SE(2) \subset \R^{3}$ so that we can use the $\R^{3}$ norm on the elements of $SE(2)$. When we consider a time-dependent family of rigid motions $(\tau(t))_{t \in [0,T]}$, we will write $\tau_{t}:=\tau(t,\cdot)$.
\begin{Corollary} \label{CorEstPsi}
Reducing $U$ if necessary one has for some $C>0$:,
\begin{equation} \label{EqDiffTau1}
\forall \tau, \tilde{\tau} \in U, \ \ \| \Psi[\tau] \circ \{ \Psi[\tilde{\tau}] \}^{-1} - \Id \|_{C^{2}(\overline{\Omega})} \leq C \| \tau - \tilde{\tau} \|_{\R^{3}},
\end{equation}
and if $\tau_{t}, \tilde{\tau}_{t} \in C^{1}([0,T];SE(2))$, then for all $t_{0} \in [0,T]$,

\begin{equation} \label{EqDiffTau2}
\left\| \left[ \frac{d}{dt} \left( \Psi[\tau_{t}] \circ \{ \Psi[\tilde{\tau}_{t}] \}^{-1} \right) \right]_{t=t_{0}} \right\|_{C^{1}(\overline{\Omega})} 
\leq C \Big(  \| \tilde{\tau}'_{t_{0}} \|_{\R^{3}} \, \| \tau_{t_{0}} - \tilde{\tau}_{t_{0}} \|_{\R^{3}}
+ \| \tau'_{t_{0}} - \tilde{\tau}'_{t_{0}} \|_{\R^{3}} \Big).
\end{equation}
\end{Corollary}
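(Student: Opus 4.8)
The plan is to deduce both estimates from a single soft principle. The map
\[
G(\tau,\tilde\tau) := \Psi[\tau] \circ \{\Psi[\tilde\tau]\}^{-1}
\]
is, for every fixed $k$, a $C^\infty$ function of $(\tau,\tilde\tau) \in U \times U$ valued in $C^{k}(\overline{\Omega})$, and it equals $\Id$ identically on the diagonal $\{\tau=\tilde\tau\}$; the two bounds will then express the vanishing of $G-\Id$, and of an appropriate first derivative, on that diagonal. First I would record the smoothness of $G$. By Proposition \ref{ProDiffeos} the map $\tau \mapsto \Psi[\tau]$ is $C^\infty$ into $\mathrm{Diff}(\overline{\Omega})$; realizing $\Psi[\tau]$ as the time-one flow of the vector field $V_\tau$ from the proof of Proposition \ref{ProDiffeos} (which is smooth in $\tau$ and in $(t,x)$), the smooth dependence of flows on parameters shows that $\Psi[\tau]$ and all its spatial derivatives depend smoothly on $\tau$, and likewise its inverse $\{\Psi[\tilde\tau]\}^{-1}$, which is the reversed time-one flow. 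Composing two such families then yields that $(\tau,\tilde\tau)\mapsto G(\tau,\tilde\tau)$ is $C^\infty$ into each $C^{k}(\overline{\Omega})$. Reducing $U$ if necessary, I may assume it is a convex compact neighbourhood of $\Id$ in the $\R^3$-coordinates, so that segments joining points of $U$ stay in $U$.

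For \eqref{EqDiffTau1}, since $G(\tilde\tau,\tilde\tau)=\Id$, I apply the mean value inequality in the first slot along the segment from $\tilde\tau$ to $\tau$:
\[
\| G(\tau,\tilde\tau) - \Id \|_{C^{2}(\overline{\Omega})}
= \| G(\tau,\tilde\tau) - G(\tilde\tau,\tilde\tau) \|_{C^{2}(\overline{\Omega})}
\leq \Big( \sup_{U\times U} \| \partial_\tau G \| \Big)\, \| \tau - \tilde\tau \|_{\R^{3}},
\]
where $\|\partial_\tau G\|$ denotes the operator norm of $\partial_\tau G(\tau,\tilde\tau)$ as a linear map $\R^{3}\to C^{2}(\overline{\Omega})$. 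This supremum is finite because $\partial_\tau G$ is continuous and $U\times U$ is compact, which gives \eqref{EqDiffTau1}.

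For \eqref{EqDiffTau2}, write $a=\tau_{t_0}$, $\tilde a=\tilde\tau_{t_0}$, $v=\tau'_{t_0}$, $\tilde v=\tilde\tau'_{t_0}$. As $G$ is $C^1$ into $C^1(\overline{\Omega})$ and $t\mapsto(\tau_t,\tilde\tau_t)$ is $C^1$ into the finite-dimensional $U\times U$, the chain rule gives
\[
\Big[ \tfrac{d}{dt} G(\tau_t,\tilde\tau_t) \Big]_{t_0}
= \partial_\tau G(a,\tilde a)[v] + \partial_{\tilde\tau} G(a,\tilde a)[\tilde v]
= \partial_\tau G(a,\tilde a)[v-\tilde v] + H(a,\tilde a)[\tilde v],
\]
where $H:=\partial_\tau G + \partial_{\tilde\tau} G$. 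The first term is bounded in $C^1$ by $C\|v-\tilde v\|_{\R^3}$, using the continuity of $\partial_\tau G$ on the compact $U\times U$; this produces the $\|\tau'_{t_0}-\tilde\tau'_{t_0}\|_{\R^3}$ contribution. The crucial observation is that $H$ vanishes on the diagonal: differentiating the identity $G(\tau_t,\tau_t)\equiv\Id$ gives $\partial_\tau G(b,b)[w]+\partial_{\tilde\tau} G(b,b)[w]=0$ for every $b\in U$ and every $w$, that is $H(b,b)=0$. Since $H$ is $C^1$ (here I use that $G$ is $C^2$ into $C^1(\overline{\Omega})$) and vanishes on the diagonal, the mean value inequality yields $\|H(a,\tilde a)\|\leq C\|a-\tilde a\|_{\R^3}$ in operator norm from $\R^3$ to $C^1(\overline{\Omega})$, whence $\|H(a,\tilde a)[\tilde v]\|_{C^1(\overline{\Omega})}\leq C\|a-\tilde a\|_{\R^3}\,\|\tilde v\|_{\R^3}$. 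Adding the two contributions gives \eqref{EqDiffTau2}.

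The only delicate point is the claim that composition and inversion of the diffeomorphisms $\Psi[\tau]$ depend smoothly on $\tau$ in each fixed $C^{k}(\overline{\Omega})$ topology, since in general composition of $C^{k}$ maps is differentiable only with a loss of one derivative. Here this is harmless: the parameter space $U$ is finite-dimensional and $\Psi$ is $C^\infty$-valued, so the inner maps always carry arbitrarily many spare derivatives, and presenting $\Psi[\tau]$ and $\{\Psi[\tilde\tau]\}^{-1}$ as time-one flows of smoothly $\tau$-dependent vector fields lets me invoke the smooth dependence of flows on parameters directly. This makes $G$ genuinely $C^\infty$ into every $C^{k}(\overline{\Omega})$ and legitimizes both mean value arguments.
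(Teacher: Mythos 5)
Your proof is correct, but it takes a genuinely different route from the paper's. The paper argues computationally: writing $g(t,x)=\Psi[\tau_{t}](x)$, $\tilde{g}(t,x)=\Psi[\tilde{\tau}_{t}](x)$ and $\tilde{h}=\tilde{g}^{-1}$, it differentiates the composition in time, then uses the identity \eqref{rel0} (obtained by differentiating $\tilde{g}(t,\tilde{h}(t,x))=x$) to eliminate $\partial_{t}\tilde{h}$ and rewrite the derivative as $\big(\partial_{t}g-\partial_{t}\tilde{g}\big)+\big(\partial_{x}g-\partial_{x}\tilde{g}\big)\partial_{t}\tilde{h}$, each difference being estimated through uniform Lipschitz bounds on $\Psi$ and $\partial_{x}\Psi$ (its \eqref{EqDiffTau1bis}) over the compact $U$. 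Your argument replaces this computation by a soft principle: $G(\tau,\tilde{\tau})=\Psi[\tau]\circ\{\Psi[\tilde{\tau}]\}^{-1}$ is smooth into each $C^{k}(\overline{\Omega})$ and equals $\Id$ on the diagonal, so \eqref{EqDiffTau1} is the mean value inequality, while \eqref{EqDiffTau2} follows from the chain rule once you observe that $H=\partial_{\tau}G+\partial_{\tilde{\tau}}G$ vanishes on the diagonal --- this diagonal identity is exactly the abstract counterpart of the paper's cancellation via \eqref{rel0}. What your approach buys is brevity of the estimates and the fact that Remark \ref{nems} (replacing $C^{2}$ and $C^{1}$ by any $C^{k}$ norms) becomes automatic; what it costs is a heavier regularity input, namely that composition and inversion of the family $\Psi[\tau]$ depend smoothly on the parameter in every $C^{k}$ topology, which you correctly legitimize by going back to the flow construction in the proof of Proposition \ref{ProDiffeos} (the inverse being the time-one flow of the reversed field, and joint smoothness in $(\tau,t,x)$ passing to smoothness of the parameter-to-function map on the compact $\overline{\Omega}$). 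By contrast, the paper's hands-on computation needs only Lipschitz bounds on $\Psi$ and $d\Psi$ and uniform $C^{2}$ bounds on the inverses, and it obtains the time-differentiability of $\tilde{h}$ from the implicit relation rather than from an abstract smoothness statement. Both arguments, as they must, reduce $U$: yours to make it convex and compact, the paper's to secure the uniform bounds.
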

Let us emphasize that $\{ \Psi[\tilde{\tau}_{t}] \}^{-1}$ denotes the inverse of $\Psi[\tilde{\tau}_{t}]$ with respect to the variable $x$. 
\begin{proof}
Reducing $U$ if necessary, one has uniformly for $\tau \in U$ that
\begin{equation*}
\| \Psi[\tau] - \Id \|_{C^{2}(\overline{\Omega})} \leq \frac{1}{2},
\end{equation*}
so that we have a uniform bound on $\| \{ \Psi[\tau] \}^{-1} \|_{C^{2}(\overline{\Omega})}$, and \eqref{EqDiffTau1} follows from the fact that $\Psi$ is uniformly Lipschitz on $U$. In the same way, we have
\begin{equation} \label{EqDiffTau1bis}
\forall \tau, \tilde{\tau} \in U, \ \ 
\| \partial_{x} \Psi[\tau]  - \partial_{x} \Psi[\tilde{\tau}] \|_{C^{1}(\overline{\Omega}; \R^{2 \times 2})} \leq C \| \tau - \tilde{\tau} \|_{\R^{3}}.
\end{equation}
On the other side, denoting 
\begin{equation*}
g(t,x):=\Psi[\tau_{t}](x),\ \ 
h(t,x):=\{ \Psi[\tau_{t}] \}^{-1}(x), \ \ 
\tilde{g}(t,x):=\Psi[\tilde{\tau}_{t}](x) \ \text{ and } \ 
\tilde{h}(t,x):=\{ \Psi[\tilde{\tau}_{t}] \}^{-1}(x),
\end{equation*}
we have
\begin{equation*}
\frac{d}{dt} \left( \Psi[\tau_{t}] \circ \{ \Psi[\tilde{\tau}_{t}] \}^{-1} (x)\right) =
\partial_{t} g (t,\tilde{h}(t,x)) + \big( \partial_{x} g (t,\tilde{h}(t,x)) \big) \partial_{t}\tilde{h}(t,x).
\end{equation*}
Since
\begin{equation} \label{rel0}
\partial_{t} \tilde{g} (t,\tilde{h}(t,x)) + \big( \partial_{x} \tilde{g} (t,\tilde{h}(t,x)) \big) \partial_{t}\tilde{h}(t,x) = 0,
\end{equation}
we have
\begin{eqnarray}
\nonumber
\frac{d}{dt} \left( \Psi[\tau_{t}] \circ \{ \Psi[\tilde{\tau}_{t}] \}^{-1} (x) \right) &=&
\partial_{t} g (t,\tilde{h}(t,x)) - \partial_{t} \tilde{g} (t,\tilde{h}(t,x)) \\
\label{diffpsi}
& \, & \! + \  \big\{ \partial_{x} g (t,\tilde{h}(t,x)))
- \partial_{x} \tilde{g} (t,\tilde{h}(t,x)) \big\} \, \partial_{t}\tilde{h}(t,x).
\end{eqnarray}
Concerning the first term in the right hand side of \eqref{diffpsi}, we use
\begin{equation} \label{partgt}
\partial_{t} g (t,y) =[d \Psi (\tau_{t}) \cdot \tau'_{t}] (y)
 \ \text{ and } \
\partial_{t} \tilde{g} (t,y)  = [d \Psi (\tilde{\tau}_{t}) \cdot \tilde{\tau}'_{t}] (y),
\end{equation}
and the regularity of $\Psi$. Concerning the second one, we use \eqref{EqDiffTau1bis} to estimate the term between brackets and \eqref{rel0} and \eqref{partgt} to estimate $\partial_{t}\tilde{h}$. Our claim \eqref{EqDiffTau2} follows.
\end{proof}
\begin{Remark}
\label{nems}
Clearly we could have put any $C^{k}(\overline{\Omega})$ norm on the left hand sides of \eqref{EqDiffTau1} and \eqref{EqDiffTau2}.
\end{Remark}
\section{Proof of Theorem \ref{UniqYudo}}
\label{Section:Yudo}
In this section, we consider the inviscid case and prove Theorem \ref{UniqYudo}. \par
\subsection{A priori estimates}
\label{Section:PrioriYudo}
We begin by giving a priori estimates on a solution given by Theorem \ref{ThmYudo}. We suppose that $\partial \Omega$ has $g+1$ connected components $\Gamma_{1}, \dots , \Gamma_{g+1}$; we suppose that $\Gamma_{g+1}$ is the outer one. We add to this list $\Gamma_{0}=\Gamma_{0}(t)=\partial {\mathcal S}(t)$. We denote $\mathfrak{t}$ the tangent to $\partial \Omega$ and $\partial {\mathcal S}(t)$ and define
\begin{equation*}
\gamma_{0}^{i} := \int_{\Gamma_{i}} u_{0} \cdot \mathfrak{t} \, d \sigma \ \text{ for } i=1, \dots, g \ \text{ and } \ 
\gamma_{0} := \int_{\partial {\mathcal S}_{0}} u_{0} \cdot \mathfrak{t} \, d \sigma,
\end{equation*}
and we let 
\begin{equation*}
\gamma := |\gamma_{0}| + \sum_{i=1}^{g}|\gamma_{0}^{i}|.
\end{equation*}
We have the following a priori estimates on any solution of the system in the sense of Theorem \ref{ThmYudo}.
\begin{Proposition} \label{PropAPrioriEstimates}
Let $(\ell,r,u)$ a solution of the system in the sense of Theorem \ref{ThmYudo} in the time interval $[0,T]$. Then one has the following a priori estimates: for all $t \in [0,T]$ and $q \in [1,+\infty]$,
\begin{gather*}
\| \curl u(t,\cdot) \|_{L^{q}({\mathcal F}(t))} = \| \curl u_{0} \|_{L^{q}({\mathcal F}_{0})}, \\
\forall i = 1, \dots , g, \ 
\int_{\Gamma_{i}} u(t,\cdot) \cdot \mathfrak{t} \, d \sigma = \gamma^{i}_{0} \ \text{ and  } \
\int_{\partial {\mathcal S}(t)} u(t,\cdot) \cdot \mathfrak{t} \, d \sigma = \gamma_{0}, \\
\| u(t,\cdot) \|^{2}_{L^{2}({\mathcal F}(t))} + m | \ell (t)|^{2} + {\mathcal J} |r(t)|^{2}
= \| u_{0} \|^{2}_{L^{2}({\mathcal F}_{0})} + m | \ell_{0}|^{2} + {\mathcal J}|r_{0}|^{2}.
\end{gather*}
Moreover, for $\delta>0$, there is a constant $C>0$ such that for all $T$ such that $\dist ({\mathcal S}(t),\partial \Omega) \geq \delta$ on $[0,T]$, one has for all $t \in [0,T]$ and $q \in [2,\infty)$,
\begin{equation} \label{EstYudo}
\| u(t,\cdot) \|_{W^{1,q}({\mathcal F}(t))} \leq Cq \big( \| \omega_{0} \|_{L^{q}({\mathcal F}_{0})}
+ |\ell_{0}| + |r_{0}| + \gamma\big).
\end{equation}
\end{Proposition}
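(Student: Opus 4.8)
The first three identities are the standard conservation laws for this fluid-solid system, and I would obtain them in the order: vorticity transport, circulations, then energy. For the vorticity, the key point is that in the fluid region the velocity satisfies the transport equation $\partial_t \omega + (u\cdot\nabla)\omega = 0$ (the two-dimensional Euler vorticity is transported, with no stretching term). Since the flow map associated to $u$ is, for each $t$, a volume-preserving homeomorphism of $\overline{{\mathcal F}(t)}$ onto $\overline{{\mathcal F}_0}$ (volume-preserving because $\div u = 0$, and the log-Lipschitz regularity in the class of Theorem~\ref{ThmYudo} guarantees a well-defined flow à la Yudovich), the $L^q$ norm of $\omega$ is preserved by composition with this measure-preserving map, giving $\| \curl u(t,\cdot) \|_{L^{q}({\mathcal F}(t))} = \| \curl u_{0} \|_{L^{q}({\mathcal F}_{0})}$ for every $q \in [1,+\infty]$.

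For the circulations, I would use Kelvin's theorem. Along the outer components $\Gamma_i$ of $\partial\Omega$, which are fixed curves carried by the flow (the impermeability condition $u\cdot n = 0$ on $\partial\Omega$ keeps them invariant), the circulation $\int_{\Gamma_i} u\cdot\mathfrak{t}\,d\sigma$ is conserved by Kelvin's theorem, yielding $\gamma_0^i$. For the body boundary $\partial{\mathcal S}(t)$, the curve is transported by the rigid motion and is again a material curve for the fluid flow (the impermeability condition \eqref{Euler3} ensures the fluid does not cross it), so Kelvin's theorem applies equally and gives conservation of $\gamma_0$. The energy identity is the total (fluid plus solid) kinetic energy balance: one multiplies \eqref{Euler1} by $u$, integrates over ${\mathcal F}(t)$, uses $\div u = 0$ and integration by parts; the boundary term on $\partial{\mathcal S}(t)$ produced by the pressure exactly matches, via \eqref{Solide1}--\eqref{Solide2} and the boundary condition \eqref{Euler3}, the time derivative of the solid kinetic energy $\tfrac12 m|\ell|^2 + \tfrac12{\mathcal J}|r|^2$, while the boundary term on $\partial\Omega$ vanishes by \eqref{Euler4}. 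This gives the stated conservation of $\|u\|_{L^2}^2 + m|\ell|^2 + {\mathcal J}|r|^2$.

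The estimate \eqref{EstYudo} is the genuinely substantive part. The idea is to reconstruct $u(t,\cdot)$ from its curl, its divergence (zero), its normal boundary values, and the circulations, via an elliptic $W^{1,q}$ estimate. Concretely I would decompose $u$ as a sum of: the unique divergence-free field with the prescribed vorticity $\omega(t)$ and zero normal trace on $\partial{\mathcal F}(t)$; harmonic correctors carrying the prescribed circulations $\gamma_0^i,\gamma_0$; and a contribution accounting for the nonzero normal trace $u\cdot n = u_{\mathcal S}\cdot n$ on $\partial{\mathcal S}(t)$, which is linear in $(\ell,r)$. The elliptic regularity constants for the stream-function (or Biot--Savart) problem on ${\mathcal F}(t)$ must be controlled \emph{uniformly in time}; this is exactly where the hypothesis $\dist({\mathcal S}(t),\partial\Omega)\geq\delta$ enters, since it ensures the fluid domains ${\mathcal F}(t)$ remain in a compact family of smooth domains with a uniform geometry (uniform cone condition, uniform bounds on the boundary). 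The linear growth of the constant in $q$, i.e. the factor $Cq$, is the classical Calderón--Zygmund behaviour: the $L^q \to W^{1,q}$ bound for the Biot--Savart operator grows like $q$ as $q\to\infty$, which is precisely the fact exploited by Yudovich's method. I would therefore transport the estimate to a fixed reference domain using the volume-preserving diffeomorphisms $\Psi[\tau]$ from Proposition~\ref{ProDiffeos} (whose $C^2$ regularity is uniform over $U$ by Corollary~\ref{CorEstPsi}), so that the elliptic constants become those of a single fixed domain, and then combine the $L^q$ bound on $\omega$ (from the first identity), the $L^2$ bound on $(\ell,r)$ (from the energy identity) and the conserved circulations to obtain \eqref{EstYudo}. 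The main obstacle is establishing the uniformity in $t$ of the elliptic constants and the correct $O(q)$ dependence; everything else is bookkeeping once the domains are pinned to a reference configuration.
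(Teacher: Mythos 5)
Your treatment of the three conservation laws coincides with the paper's: vorticity is transported by the volume-preserving flow (hence all $L^q$ norms are conserved), the circulations are conserved by Kelvin's theorem (your observation that $\partial{\mathcal S}(t)$ is a material curve \emph{as a set} thanks to \eqref{Euler3} is exactly the right justification), and the energy identity is the fluid-plus-solid balance obtained by multiplying \eqref{Euler1} by $u$ and matching the pressure boundary term with \eqref{Solide1}--\eqref{Solide2}. No issues there.

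The gap is in your reduction of \eqref{EstYudo} to ``the elliptic constants of a single fixed domain'' by pulling back with the diffeomorphisms $\Psi[\tau]$ of Proposition \ref{ProDiffeos}. A \emph{non-rigid} volume-preserving diffeomorphism cannot preserve the $\div$/$\curl$ structure: the vector-field pullback $[d\Psi]^{-1}\,u\circ\Psi$ keeps the divergence zero but turns $\curl u=\omega$ into a variable-coefficient relation (equivalently, the stream function solves $\div(M\nabla\tilde\psi)=\tilde\omega$ with $M=(d\Psi)^{-1}(d\Psi)^{-T}\neq\Id$), while the one-form pullback $(d\Psi)^{T}u\circ\Psi$ preserves the curl but not the divergence. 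So after transport to ${\mathcal F}_0$ you face a variable-coefficient elliptic problem, and its $W^{1,q}$ constant is \emph{not} the fixed-domain Calder\'on--Zygmund constant. The natural perturbation argument --- write the operator as the flat one plus an error whose coefficients have size $\varepsilon=\|\nabla\Psi[\tau]-\Id\|_{L^\infty}$, then absorb --- yields $\|\tilde u\|_{W^{1,q}}\leq Cq\|\tilde\omega\|_{L^q}+Cq\,\varepsilon\,\|\nabla\tilde u\|_{L^q}$, and absorption needs $Cq\varepsilon<1$: since $\varepsilon$ is fixed by the size of the neighborhood while $q\to\infty$, this fails precisely in the large-$q$ regime that Yudovich's method requires. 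One can recover the $O(q)$ growth for smooth variable coefficients (e.g.\ via an $L^\infty\to \mathrm{BMO}$ endpoint bound and interpolation with $L^2$), but that is a substantive missing step, not ``bookkeeping''. A secondary point: $\Psi$ is only defined for $\tau$ in a compact neighborhood $U$ of $\Id$ in $SE(2)$, whereas under the sole hypothesis $\dist({\mathcal S}(t),\partial\Omega)\geq\delta$ the solid ranges over a much larger compact set of configurations, so a covering argument is needed in any case.

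The paper's Lemma \ref{LemEstLp} is engineered to avoid exactly this difficulty. After a compactness reduction to configurations ${\mathcal S}=\tau(\hat{\mathcal S})$ with $\tau$ near $\Id$, it cuts off the data near and away from the solid and decomposes $u=u_1+u_2+w$: the piece $u_1$ solves an \emph{exterior} problem in $\R^2\setminus{\mathcal S}$, whose estimate is transported by an exactly \emph{rigid} change of variables (rigid maps preserve both $\div$ and $\curl$, so there are no error terms and no loss in $q$); the piece $u_2$ solves a problem in the fixed domain $\Omega$; and the corrector $w$, carrying the normal-trace mismatch on $\partial{\mathcal S}$, is estimated in H\"older norms via Lemma \ref{LemHolder}, using that $u_2$ is harmonic near $\partial{\mathcal S}$ --- and H\"older bounds control $W^{1,q}$ bounds uniformly in $q$ on a bounded domain. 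Thus the $O(q)$ Calder\'on--Zygmund growth only ever enters through genuinely fixed geometries. To repair your argument you would either restrict the change of variables to rigid motions (which is in essence the paper's $u_1$ step) or prove the endpoint estimate for variable-coefficient systems with constants uniform over the coefficient family; as written, the claimed reduction does not deliver the factor $Cq$ in \eqref{EstYudo}.
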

\begin{proof}
Given such a solution $(\ell,r,u)$, the vorticity $\omega(t,x):=\curl u(t,x)$ satisfies the transport equation
\begin{equation} \label{EqVor}
\partial_{t} \omega + (u \cdot \nabla) \omega =0 \ \text{ in } \ {\mathcal F}(t).
\end{equation}
Due to the log-Lipschitz regularity of $u$, one can associate a unique flow $\Phi=\Phi(t,s,x)$, and by uniqueness of the solutions of \eqref{EqVor} at this level of regularity, one has
$\omega(t,x) = \omega_{0}(\Phi(0,t,x))$. Since $\Phi$ is volume-preserving (as follows from $\div u =0$), we obtain the claim on $\| \curl u \|_{L^{q}({\mathcal F}(t))}$. The second conservation is Kelvin's theorem, and the third one the conservation of energy. \par
Estimate \eqref{EstYudo} is classical in the case of a fluid alone, and is central in the argument of Yudovich \cite{Yudovich}. Here, we only need to prove that the constant appearing in the elliptic estimate for the $\div$/$\curl$ system does not depend on the position of the solid, as long as it stays distant from the boundary. Precisely, we prove the following.
\begin{Lemma} \label{LemEstLp}
For any $R>0$, there exists $C>0$ such that if ${\mathcal S} = \tau ({\mathcal S}_{0})$ for $\tau \in SE(2)$ satisfies
\begin{equation} \label{ContrainteS}
{\mathcal S} \subset \Omega \ \text{ and } \ \dist ( {\mathcal S}, \partial \Omega) \geq R,
\end{equation}
then any $u : {{\mathcal F}} \rightarrow \R^{2}$ verifies, for all $q \geq 2$:
\begin{equation} \label{EllipticEstimate}
\| u \|_{W^{1,q}({\mathcal F})} \leq C q \Big( \| \curl u \|_{L^{q}({\mathcal F})} + \| \div u \|_{L^{q}({\mathcal F})} \Big) 
+ C \Big( \| u \cdot n \|_{W^{1-1/q,q}(\partial  {\mathcal F})} + \sum_{i=0}^{g} \left| \int_{\Gamma_{i}}  u \cdot \mathfrak{t} \, d \sigma \right| \Big),
\end{equation}
where $\Gamma_{0}:= \partial{\mathcal S}$ and ${\mathcal F} := \Omega  \setminus {\mathcal S}$.
\end{Lemma}
Above we take as a convention that
\begin{equation} \label{ConventionSF}
\| f \|_{W^{1-1/q,q}(\partial {\mathcal F})} := \inf \big\{ \| \overline{f} \|_{W^{1,q}({\mathcal F})}, \ \overline{f} \in W^{1,q}( {\mathcal F}) \ \text{ and } \ \overline{f}_{|\partial {\mathcal F}} = f \big\}.
\end{equation}
That this norm is equivalent to the usual one (for fixed $q$), comes from the trace theorem and the existence of a continuous extension operator $W^{1-1/q,q}( \partial {\mathcal F}) \rightarrow W^{1,q}( {\mathcal F})$. 
\par
\ \par
Once Lemma \ref{LemEstLp} is established, \eqref{EstYudo} is a consequence of the previous conservations.
\end{proof}
Note that the equivalent of Lemma \ref{LemEstLp} in the framework of H\"older spaces is known (see e.g. \cite[Lemma 5]{ogfstt}):
\begin{Lemma} \label{LemHolder}
In the context of Lemma \ref{LemEstLp}, for $\lambda \in \N$ and $\alpha \in (0,1)$, there exists a constant $C>0$ independent of $\tau$ such that
\begin{equation}
\label{reg1}
\| u \|_{C^{\lambda+1,\alpha}({\mathcal F})} \leq C \left( \|\div  u\|_{C^{\lambda,\alpha}({\mathcal F})} 
+ \| \curl  u \|_{C^{\lambda,\alpha}({\mathcal F})} + \| u\cdot n\|_{C^{\lambda+1,\alpha}(\partial \Omega \cup \partial {\mathcal S})}
+ \sum_{i=0}^{g} \left| \int_{\Gamma_{i}}  u \cdot \mathfrak{t} \, d \sigma \right|  \right).
\end{equation}
\end{Lemma}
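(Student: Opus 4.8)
The statement \eqref{reg1} on a single fixed domain ${\mathcal F} = \Omega \setminus {\mathcal S}$ is classical; the genuine content of the lemma is that the constant can be chosen independently of the position $\tau$ of the solid, as long as \eqref{ContrainteS} holds. My plan is therefore to first recall the fixed-domain estimate through a Hodge-type decomposition and Schauder theory, and then to promote the constant to a $\tau$-uniform one by a compactness argument over the admissible positions of the solid, transported to reference configurations by the volume-preserving diffeomorphisms of Proposition \ref{ProDiffeos}.

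On a fixed ${\mathcal F}$, I would reconstruct $u$ from the data $f := \div u$, $\omega := \curl u$, $g := u \cdot n$ on $\partial{\mathcal F} = \partial\Omega \cup \partial{\mathcal S}$, and the circulations $\gamma_i := \int_{\Gamma_i} u \cdot \mathfrak{t}\, d\sigma$ ($i = 0, \dots, g$), by writing $u = \nabla q + \nabla^\perp \psi + w$. Here $q$ solves the Neumann problem $\Delta q = f$ in ${\mathcal F}$, $\partial_n q = g$ on $\partial{\mathcal F}$ (compatible since $\int_{{\mathcal F}} f = \int_{\partial{\mathcal F}} g$), $\psi$ solves the Dirichlet-type problem $\Delta \psi = \omega$ with $\psi$ constant on each boundary component, and $w$ lies in the $(g+1)$-dimensional space ${\mathcal H}$ of divergence- and curl-free fields with zero normal trace on $\partial{\mathcal F}$, its ${\mathcal H}$-component being determined by matching the prescribed circulations $\gamma_i$. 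The interior and boundary Schauder estimates give $\|\nabla q\|_{C^{\lambda+1,\alpha}({\mathcal F})} + \|\nabla^\perp \psi\|_{C^{\lambda+1,\alpha}({\mathcal F})} \le C(\|f\|_{C^{\lambda,\alpha}} + \|\omega\|_{C^{\lambda,\alpha}} + \|g\|_{C^{\lambda+1,\alpha}(\partial{\mathcal F})})$, while on ${\mathcal H}$ the linear map $w \mapsto (\int_{\Gamma_i} w\cdot\mathfrak{t})_{i}$ is an isomorphism onto $\R^{g+1}$, so that $\|w\|_{C^{\lambda+1,\alpha}({\mathcal F})} \le C \sum_{i=0}^{g} |\gamma_i|$. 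Summing these bounds yields \eqref{reg1} with a constant $C({\mathcal F})$ depending on the particular domain.

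To remove the dependence on $\tau$, I would use that the set $K := \{ \tau \in SE(2) : \tau({\mathcal S}_0) \subset \Omega, \ \dist(\tau({\mathcal S}_0), \partial\Omega) \ge R \}$ is compact, its rotational part lying in the compact $SO(2)$ and its translational part being bounded because ${\mathcal S}$ stays inside the bounded set $\Omega$. Fix $\tau_* \in K$. Applying the construction of Proposition \ref{ProDiffeos} to $\tau \circ \tau_*^{-1}$, with the cut-off localized around the fixed compact set $\tau_*({\mathcal S}_0)$ (which sits at distance at least $R$ from $\partial\Omega$), produces, for $\tau$ in a neighborhood of $\tau_*$ in $K$, volume-preserving diffeomorphisms $\Phi_\tau$ of $\overline\Omega$ with $\Phi_{\tau_*} = \Id$, $\Phi_\tau = \Id$ near $\partial\Omega$, $\Phi_\tau$ rigid near $\partial{\mathcal S}^{\tau_*}$ and mapping ${\mathcal S}^{\tau_*}$ onto ${\mathcal S}^{\tau}$, and $\Phi_\tau \to \Id$ in every $C^{k}(\overline\Omega)$ as $\tau \to \tau_*$, the rate being controlled by \eqref{EqDiffTau1}. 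Transporting a field $u$ on ${\mathcal F}^{\tau}$ back to ${\mathcal F}^{\tau_*}$ through $\Phi_\tau$, the volume-preserving change of variables turns $(\div, \curl)$ into a first-order elliptic system whose $C^{\lambda,\alpha}$ coefficients differ from those of $(\div, \curl)$ by a quantity bounded by $\|\Phi_\tau - \Id\|_{C^{\lambda+1,\alpha}}$, while the normal trace on $\partial{\mathcal F}^{\tau}$ and the circulations are comparable to their transported counterparts. Hence the fixed-domain estimate at $\tau_*$ transfers to ${\mathcal F}^\tau$ with a constant close to $C({\mathcal F}^{\tau_*})$, uniformly for $\tau$ near $\tau_*$; covering $K$ by finitely many such neighborhoods and taking the maximum of the resulting constants gives the $\tau$-independent $C$ of the statement.

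The main obstacle is this last uniformity step, and specifically the control of the lower-order error terms created when the operators $\div$ and $\curl$ and the finite-dimensional space ${\mathcal H}$ of harmonic fields are deformed by $\Phi_\tau$: these errors must remain subordinate to the leading elliptic term, which is guaranteed only because $\Phi_\tau$ is $C^{\lambda+1,\alpha}$-close to the identity, precisely the smoothness and Lipschitz dependence furnished by Proposition \ref{ProDiffeos} and \eqref{EqDiffTau1}. An alternative that sidesteps the explicit bookkeeping of these error terms is a compactness–contradiction argument: were no uniform constant to exist, one would find $\tau_n \in K$ and fields $u_n$ normalized by $\|u_n\|_{C^{\lambda+1,\alpha}({\mathcal F}^{\tau_n})} = 1$ whose data (the right-hand side of \eqref{reg1}) tend to $0$; transporting via $\Phi_{\tau_n}$ with $\tau_n \to \tau_* \in K$ and passing to the limit in a slightly weaker Hölder norm would produce a field on ${\mathcal F}^{\tau_*}$ with vanishing divergence, curl, normal trace and circulations, hence identically zero, contradicting the normalization once the stronger norm is recovered from the elliptic estimate itself.
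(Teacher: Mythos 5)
Your proof is sound, but note that the paper itself does not prove Lemma \ref{LemHolder} at all: the estimate is invoked as known, with a citation to \cite[Lemma 5]{ogfstt}, and only its $W^{1,q}$ counterpart, Lemma \ref{LemEstLp}, is actually proved in the text. Your argument is therefore necessarily a different route, and it is a legitimate one. The fixed-domain step (Neumann potential $q$, stream function $\psi$, and the $(g+1)$-dimensional space of harmonic fields identified through the circulation map) is the standard Hodge--Schauder argument, and your uniformity step shares its starting point with the paper's proof of Lemma \ref{LemEstLp} --- compactness of the set of admissible $\tau$ --- but then diverges: where you pull the whole problem back to a reference configuration by the non-rigid, volume-preserving diffeomorphisms of Proposition \ref{ProDiffeos} and absorb the resulting coefficient errors (licit thanks to \eqref{EqDiffTau1} and Remark \ref{nems}, which give closeness to the identity in every $C^{k}$ norm), the paper instead cuts $u$ off near the solid and solves an exterior problem in $\R^{2}\setminus{\mathcal S}$, exploiting that this exterior problem is exactly invariant under rigid motions, so that no perturbative error terms appear at all; the price paid there is that the correction terms ($H_{i}$ and $w$) are estimated precisely by means of the H\"older lemma being discussed, which is why that lemma is taken as an external input. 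Two small points would make your absorption step airtight: fix the constant $C_{*}$ of the reference configuration \emph{before} shrinking the neighborhood of $\tau_{*}$, so that the smallness threshold $\epsilon < 1/(2C_{*})$ is well defined and the finite cover of $K$ yields a genuine maximum of constants; and observe that since $\Phi_{\tau}$ is rigid near $\partial{\mathcal S}$ and the identity near $\partial\Omega$, the pullback $[d\Phi_{\tau}]^{-1}(u\circ\Phi_{\tau})$ transforms the normal trace and the circulations on every $\Gamma_{i}$ exactly, with no error term, so the perturbation genuinely affects only the interior operators $\div$ and $\curl$, which is what your absorption (or, equivalently, your compactness--contradiction variant) requires.
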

\noindent
\begin{proof}[Proof of Lemma \ref{LemEstLp}] 
As we explained this is standard in a fixed domain (see in particular \cite{GT,Yudovich}). Note in particular that it is elementary to reduce to the case where 
\begin{equation*}
u \cdot n =0 \ \text{ on } \ \partial \Omega \cup  \partial {\mathcal S} \ \text{ and } \ \int_{\Gamma_{i}}  u \cdot \mathfrak{t} \, d \sigma =0 \ \text{ for all } \ i= 0 \dots g,
\end{equation*}
by using the convention \eqref{ConventionSF} and the following functions $H_{i}:= \nabla^{\perp} \psi_{i}$, for $1 \leq i \leq g$, where $\psi_{i}$ satisfies
\begin{equation} \nonumber
\left\{ \begin{array}{l}
-\Delta {\psi}_i = 0 \ \text{ for } \ x \in \mathcal{F} , \\
\psi_{i} = 1 \ \text{ on } \ \Gamma_{i}, \\
\psi_{i} = 0 \ \text{ on } \ (\partial \Omega \cup  \partial {\mathcal S} ) \setminus \Gamma_{i}, 
\end{array} \right.
\end{equation}
so that 
\begin{equation*}
\curl H_{i} = \div H_{i}=0 \ \text{ in } \ \Omega \setminus {\mathcal F}, \ \
H_{i} \cdot n =0 \ \text{ on } \ \partial \Omega \cup \partial {\mathcal S} \ \text{ and } \ \int_{\Gamma_{j}} H_{i} \cdot \mathfrak{t} \, d \sigma = \delta_{ij} \ \text{ for } \ 1 \leq j \leq g.
\end{equation*}
For these functions $H_{i}$, we have suitable estimates by using Lemma \ref{LemHolder}. \par
\ \par
We notice that the set of $\tau \in SE(2)$ such that ${\mathcal S} = \tau({\mathcal S}_{0})$ satisfies \eqref{ContrainteS} is compact. Hence by a straightforward compactness argument, and since such a constant $C>0$ is well-defined for any fixed configuration $\hat{\mathcal S} = \hat{\tau}({\mathcal S}_{0})$ satisfying \eqref{ContrainteS}, we see that we only need to prove that, given such a fixed configuration $\hat{\mathcal S}$, there exists a constant $C>0$ for which \eqref{EllipticEstimate} is valid whenever ${\mathcal S}= \tau (\hat{\mathcal S})$, when $\tau$ belongs to some arbitrarily small neighborhood of $\Id_{\R^{2}}$. Now given a fixed configuration $\hat{\mathcal S}$, we introduce  $\delta >0$ and $\varepsilon \in (0,\delta)$ such that for any $\tau \in SE(2)$, $\| \tau - \Id \| \leq \varepsilon$ one has for ${\mathcal S}:= \tau(\hat{\mathcal S})$:
\begin{gather*}
\dist ({\mathcal S}, \partial \Omega) \geq 4 \delta \ \text{ and } \
{\mathcal V}_{\delta}(\partial \hat{\mathcal S}) \text{ is a tubular neighborhood of } \partial \hat{\mathcal S}.
\end{gather*}
We introduce $\varphi$ a cutoff function in $C^{\infty}_{0}(\R^{2})$ such that
\begin{equation*}
\varphi =1 \text{ on } {\mathcal V}_{2\delta}(\hat{\mathcal S}) \ \text { and } \
\varphi =0 \text{ on } \R^{2} \setminus {\mathcal V}_{3\delta}(\hat{\mathcal S}).
\end{equation*}
Now we introduce $u_{1} \in W^{1,q}_{loc}(\R^{2} \setminus {\mathcal S})$ and $u_{2} \in W^{1,q}({\Omega})$ as the solutions of the following elliptic systems:
\begin{equation} \label{Eq:u1u2}
\left\{ \begin{array}{l}
 \curl u_{1} = \varphi \curl u \ \text{ in } \ \R^{2} \setminus {\mathcal S}, \\
 \div u_{1} = \varphi \div u \ \text{ in } \ \R^{2} \setminus {\mathcal S}, \\
 u_{1} \cdot n = 0 \ \text{ on } \ \partial {\mathcal S}, \\
 \int_{\partial {\mathcal S}} u_{1} \cdot \mathfrak{t} \, d \sigma= 0, \\
 \lim_{|x| \rightarrow +\infty} u_{1}(x)=0,
\end{array} \right.
\ \qquad
\left\{ \begin{array}{l}
 \curl u_{2} = (1-\varphi) \curl u \ \text{ in } \ \Omega, \\
 \div u_{2} = (1-\varphi) \div u \ \text{ in } \ \Omega, \\
 u_{2} \cdot n = - u_{1} \cdot n\ \text{ on } \ \partial \Omega, \\
 \int_{\partial \Gamma_{i}} u_{2} \cdot \mathfrak{t} \, d \sigma=0, \ i=1 \dots g.
\end{array} \right.
\end{equation}
Note that the compatibility condition 
\begin{equation*}
- \int_{\partial \Omega} u_{1} \cdot n \, d \sigma = \int_{\Omega}(1-\varphi) \div u \, dx,
\end{equation*}
comes from 
\begin{equation*}
\int_{\Omega} (1-\varphi) \div u \, dx = \int_{{\mathcal F}} (1-\varphi) \div u \, dx,
\end{equation*}
and the use of the divergence theorem:
\begin{equation*}
\int_{{\mathcal F}} \div u \, dx = 0 \ \text{ and } \
\int_{{\mathcal F}} \varphi \div u \, dx = \int_{{\mathcal F}} \div u_{1} \, dx = 
\int_{\partial \Omega} u_{1} \cdot n \, d \sigma + \int_{\partial {\mathcal S}} u_{1} \cdot n \, d \sigma.
\end{equation*}
Moreover, by using Stokes' formula (in the interior of $\Gamma_{i}$ of in ${\mathcal S}$), the fact that the outer component of $\partial \Omega$ is $\Gamma_{g+1}$ and considering the supports of $\varphi$ and $1-\varphi$, one sees that
\begin{equation*}
\forall i= 1, \dots, g, \ \ \int_{\Gamma_{i}} u_{1} \cdot \mathfrak{t} \, d \sigma = \int_{\partial {\mathcal S}} u_{2} \cdot \mathfrak{t} \, d \sigma =0.
\end{equation*}
Using the fact that inequality \eqref{EllipticEstimate} is true for a fixed geometry and that the problem satisfied by $u_{1}$ is invariant under rigid movements, we deduce that there exists a constant $C >0$ independent of $\tau$ (satisfying $\| \tau - \Id \| \leq \varepsilon$) and for which
\begin{equation} \label{EstU1}
\| u_{1} \|_{W^{1,q}(\R^{2} \setminus {\mathcal S})} 
\leq C q \big( \| \curl u \|_{L^{q}({\mathcal F})} + \| \div u \|_{L^{q}({\mathcal F})} \big) . 
\end{equation}
and
\begin{equation} \label{EstU2}
 \| u_{2} \|_{W^{1,q}(\Omega)}
\leq C q \big( \| \curl u \|_{L^{q}({\mathcal F})} + \| \div u \|_{L^{q}({\mathcal F})}  \big)   + C_{1}  \| u_{1} \cdot n \|_{W^{1-1/q,q}(\partial \Omega)} . 
\end{equation}
As a consequence, the right hand sides of \eqref{EstU1} and \eqref{EstU2} can be estimated by the right hand side of \eqref{EllipticEstimate}. \par
\ \par
Now we introduce $w : {\mathcal F} \rightarrow \R$ as the solution of
\begin{equation*}
\left\{ \begin{array}{l}
 \curl w = \div w =0 \ \text{ in } \ {\mathcal F}, \\
 w \cdot n = - u_{2} \cdot n \ \text{ on } \ \partial {\mathcal S}, \\
 w \cdot n = 0 \ \text{ on } \ \partial \Omega, \\
\int_{\Gamma_{i}} w \cdot \mathfrak{t} \, d \sigma=  0 , \ i=0,\dots,g.
\end{array} \right.
\end{equation*}
Note that the compatibility condition between $\div w$ and $w \cdot n$ is satisfied because, relying on the support of $\varphi$, one has
\begin{equation*}
\int_{\partial {\mathcal S}} u_{2} \cdot n \, d \sigma = \int_{ {\mathcal S}} (1- \varphi) \div u \, dx =0.
\end{equation*}
We observe that $u_{2}$ is harmonic in ${\mathcal V}_{\delta}(\partial {\mathcal S})$. It follows from standard properties of harmonic functions that for some $C >0$ independent of $\tau$ small and $q \geq 2$ one has (given $\alpha \in (0,1)$):
\begin{equation*}
\| u_{2|\partial {\mathcal S}} \|_{C^{1,\alpha}(\partial {\mathcal S})} \leq C \| u_{2} \|_{L^{2}({\mathcal V}_{\delta}(\partial {\mathcal S}))}.
\end{equation*}
It follows that  $w \in C^{1,\alpha}({\mathcal F})$ and using Lemma \ref{LemHolder} we deduce that for some $C, C', C''>0$ independent of $\tau$ small:
\begin{equation*}
\| w \|_{W^{1,q}({\mathcal F})} \leq C \| w \|_{C^{1,\alpha}({\mathcal F})} 
\leq C' \| u_{2} \|_{L^{2}({\mathcal F})}
\leq C'' \| u_{2} \|_{W^{1,q}({\mathcal F})} .
\end{equation*}
The conclusion follows since by uniqueness of the solutions of the $\div$/$\curl$ system:
\begin{equation*}
\left\{ \begin{array}{l}
 \curl v = 0 \ \text{ in } \ {\mathcal F}, \\
 \div v = 0 \ \text{ in } \ {\mathcal F}, \\
 v \cdot n = 0 \ \text{ on } \ \partial \Omega \cup \partial {\mathcal S}, \\
 \int_{\Gamma_{i}} v \cdot \mathfrak{t} \, d \sigma= 0, \ i=0, \dots, g,
\end{array} \right. \ \Longrightarrow \ v =0,
\end{equation*}
one has:
\begin{equation*}
u = u_{1} + u_{2} + w.
\end{equation*}
Gathering the estimates above, we get the conclusion.
\end{proof}
We have the following consequence of Proposition \ref{PropAPrioriEstimates}.
\begin{Corollary} \label{CorAPrioriEstimates}
Under the assumptions of Proposition \ref{PropAPrioriEstimates} (including that $\dist ({\mathcal S}(t),\partial \Omega) \geq \delta$ on $[0,T]$), we have for some constant $C=C ( \| \omega_{0} \|_{L^{\infty}({\mathcal F}_{0})} + | \ell_{0} | + |r_{0}| + \gamma )>0$ that uniformly in $[0,T]$:
\begin{equation} \label{DeuxiemeVagueAPE}
\| {u}(t) \|_{H^{1}({\mathcal F}(t))}
+ \| \partial_{t} u \|_{L^{2}({\mathcal F}(t))}
+ \| \nabla p \|_{L^{2}({\mathcal F}(t))}
\leq C.
\end{equation}
\end{Corollary}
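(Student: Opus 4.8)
The plan is to bootstrap from the $L^q$-estimates already established in Proposition \ref{PropAPrioriEstimates}. First I would bound $\| u(t) \|_{H^1({\mathcal F}(t))}$: this is exactly \eqref{EstYudo} taken at $q=2$, using that $\| \omega_0 \|_{L^2({\mathcal F}_0)} \leq |{\mathcal F}_0|^{1/2} \| \omega_0 \|_{L^\infty({\mathcal F}_0)}$. Since $\dist({\mathcal S}(t),\partial\Omega) \geq \delta$ on $[0,T]$, the constant in \eqref{EstYudo} is uniform in $t$, so this term is controlled by $C$. Moreover, the conservation of energy in Proposition \ref{PropAPrioriEstimates} together with $m,{\mathcal J}>0$ gives a uniform bound on $|\ell(t)|$ and $|r(t)|$, which I will need to estimate the material-derivative terms below.

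Next I would address $\| \partial_t u \|_{L^2({\mathcal F}(t))}$. From the momentum equation \eqref{Euler1} one has $\partial_t u = -(u \cdot \nabla) u - \nabla p$, so the task is really to estimate $\nabla p$ and the convective term; the two bounds are intertwined. Taking the divergence of \eqref{Euler1} and using $\div u = 0$ shows that $p$ solves a Neumann problem: $-\Delta p = \trace((\nabla u)^2) = \sum_{i,j} \partial_i u_j \, \partial_j u_i$ in ${\mathcal F}(t)$, with boundary data on $\partial{\mathcal S}(t)$ and $\partial\Omega$ obtained by projecting \eqref{Euler1} onto the normal direction. On $\partial\Omega$ the normal velocity is zero and stationary, so $\partial_t u \cdot n = 0$ there; on $\partial{\mathcal S}(t)$ one differentiates the impermeability condition $u \cdot n = u_{\mathcal S} \cdot n$ in time, which produces terms involving $\ell', r'$ and curvature-type contributions that are controlled using the $H^1$-bound on $u$, the bounds on $\ell, r$, and the uniform geometry coming from $\dist({\mathcal S}(t),\partial\Omega) \geq \delta$. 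The elliptic estimate for this Neumann problem — whose constant is again uniform in the solid position by the same compactness argument as in Lemma \ref{LemEstLp} — then yields $\| \nabla p \|_{L^2({\mathcal F}(t))} \leq C$. Feeding this back into $\partial_t u = -(u\cdot\nabla)u - \nabla p$ and bounding the convective term by $\| u \|_{L^\infty} \| \nabla u \|_{L^2}$ (with $\| u \|_{L^\infty}$ controlled via \eqref{EstYudo} and a Sobolev embedding, or via the $W^{1,q}$-bounds for $q>2$) closes the estimate on $\partial_t u$.

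The main obstacle I expect is the control of $\nabla p$, and specifically the boundary term on $\partial{\mathcal S}(t)$ arising from differentiating the impermeability condition in time. One must carefully track that the time-derivative of the moving-boundary constraint introduces $\ell'(t)$ and $r'(t)$, which are themselves governed by Newton's equations \eqref{Solide1}-\eqref{Solide2} and hence by the pressure trace on $\partial{\mathcal S}(t)$ — so the estimate on $\nabla p$ is genuinely coupled to the solid acceleration. The clean way to decouple this is to test the momentum equation against a suitable divergence-free extension of the rigid velocity field and use the energy structure, or equivalently to treat $(\ell', r', \nabla p)$ as the solution of a single linear system whose invertibility (with uniform constants in $t$) follows from the added-mass operator being positive definite, a fact that holds uniformly as long as the solid stays at distance $\geq \delta$ from $\partial\Omega$. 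Establishing this uniform invertibility, rather than the elliptic regularity per se, is where the real work lies; everything else reduces to the a priori estimates of Proposition \ref{PropAPrioriEstimates} and the position-uniform elliptic bounds already in hand.
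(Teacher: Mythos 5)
Your proposal is correct and follows essentially the same route as the paper: the $H^{1}$ bound is \eqref{EstYudo} at $q=2$, and your ``single linear system for $(\ell',r',\nabla p)$'' is exactly what the paper implements through the decomposition \eqref{DecompP} of $\nabla p$ into a Neumann part $\nabla\mu$ (with source $\tr(\nabla u\cdot\nabla u)$, controlled via the $W^{1,4}$ bound and Lemma \ref{LemEstLp}) plus Kirchhoff potentials $\Phi_{i}$, together with the added-mass reformulation \eqref{EvoMatrice} of Newton's law, the constants being uniform in the solid position by Lemmas \ref{LemEstLp} and \ref{LemHolder}. One small correction: the added-mass matrix $\mathcal{M}_{2}$ in \eqref{Added} is only positive semi-definite (it is a Gram matrix), and the uniform invertibility of $\mathcal{M}=\mathcal{M}_{1}+\mathcal{M}_{2}$ comes for free from the genuine mass/inertia matrix $\mathcal{M}_{1}$ being positive definite, so the real work lies in the position-uniform elliptic estimates (which, as you note, are already in hand), not in the invertibility of the coupled system.
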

\begin{proof}[Proof of Corollary \ref{CorAPrioriEstimates}]
The estimate of $\| {u}(t) \|_{H^{1}({\mathcal F}(t))}$ is a direct consequence of Proposition \ref{PropAPrioriEstimates}.
Also, by Proposition \ref{PropAPrioriEstimates}, we have that 
\begin{equation} \label{BorneW13}
\| u \|_{W^{1,4}(\overline{\mathcal F}(t))}
\leq C \big( \| \omega_{0} \|_{L^{\infty}({\mathcal F}_{0})} + | \ell_{0} | + |r_{0}| + \gamma \big).
\end{equation}
Now we use the decomposition of $\nabla p$ (see e.g. \cite[Lemma 3]{ogfstt}):
\begin{equation} \label{DecompP}
\nabla p = \nabla \mu - \nabla \left( (\Phi_{i})_{i=1,2,3} \cdot \begin{bmatrix}  \ell \\ r \end{bmatrix}' \right),
\end{equation}
where the functions $\Phi_{i}=\Phi_{i}(t,x)$ (known as the Kirchhoff potentials) and the function $\mu=\mu(t,x)$ are the solutions of the following problems: 
\begin{equation}  \label{KirchoffPotentials}
\left\{ \begin{array}{l}
-\Delta {\Phi}_i = 0 \ \text{ for } \ x \in \mathcal{F}(t), \\
\displaystyle \frac{\partial {\Phi}_i}{\partial n}= K_i  \ \text{ for } \  x \in \partial \mathcal{S}(t), \smallskip \\
\displaystyle \frac{\partial {\Phi}_i}{\partial n}= 0  \ \text{ for } \  x \in \partial \Omega,
\end{array} \right.
\quad \text{ where } \quad
K_i := \left\{\begin{array}{ll} 
n_i & \text{if} \  i=1,2 ,\\ 
(x-h(t))^\perp \cdot n & \text{ if } \ i=3,
\end{array}\right.
\end{equation}
and
\begin{equation*}
\left\{ \begin{array}{l}
-\Delta \mu = \tr (\nabla u \cdot \nabla u) \ \text{ for } \ x \in \mathcal{F}(t), \\
\frac{\partial \mu}{\partial n}= \nabla^{2} \rho \, \{ u- u_\mathcal{S} , u- u_\mathcal{S} \} - n \cdot \big(r \left(2u-u_\mathcal{S}-\ell \right)^{\perp}\big)  \ \text{ for } \ x \in \partial \mathcal{S}(t), \\
\frac{\partial \mu}{\partial n}= -\nabla^{2} \rho (u,u)  \ \text{ for } \ x \in \partial \Omega, \\
\end{array} \right.
\end{equation*}
where $u_\mathcal{S}=u_\mathcal{S}(t,x)$ is given by \eqref{vietendue} and where $\rho=\rho(t,x)$ is the signed distance to $\partial \Omega \cup \partial {\mathcal S}(t)$ (which we define in a neighborhood of $\partial \Omega \cup \partial {\mathcal S}(t)$), chosen to be negative inside ${\mathcal F}$. The function $\rho$ is constant in time near $\partial \Omega$, and is transported by the solid movement near $\partial {\mathcal S}(t)$.
Note that the fact the compatibility condition between $\Delta \mu$ and $\frac{\partial}{\partial n} \mu$ is satisfied thanks to 
\begin{equation*}
\tr(\nabla u \cdot \nabla u) = \div ((u \cdot \nabla) u), \ \ \nabla \rho = n \ \text{ on } \ \partial \Omega \cup \partial {\mathcal S},
\end{equation*}
\eqref{Euler3} and \eqref{Euler4}. \par
Moreover, using  Green's theorem, \eqref{KirchoffPotentials} and \eqref{DecompP},
we obtain that the equations for the solid, that is \eqref{Solide1}-\eqref{Solide2}, can be recast as follows (see also \cite[Lemma 4]{ogfstt}):
\begin{equation} \label{EvoMatrice}
\mathcal{M} \begin{bmatrix} \ell \\ r \end{bmatrix}' 
= \begin{bmatrix}  \displaystyle\int_{ \mathcal{F}(t)} \nabla \mu \cdot \nabla \Phi_i \, dx   \end{bmatrix}_{i \in \{1,2,3\}} ,
\end{equation}
where
\begin{equation}
\label{Added}
\mathcal{M} :=\mathcal{M}_1+\mathcal{M}_2,
\quad
\mathcal{M}_1 := \begin{bmatrix} m \Id_2 & 0 \\ 0 & \mathcal{J}\end{bmatrix}
\quad \text{ and } \quad
\mathcal{M}_2: = \begin{bmatrix} \int_{\mathcal{F} (t)} \nabla \Phi_i \cdot \nabla \Phi_j \ dx \end{bmatrix}_{i,j \in \{1,2,3\}}.
\end{equation}
Note that the matrix $\mathcal{M}_{2}$ is symmetric and nonnegative, as a Gram matrix. \par
Now from Lemma \ref{LemHolder}, we deduce the boundedness in $C^{\lambda,\alpha}$ of the functions $\nabla \Phi_{i}$ independently of the time. By using Lemma \ref{LemEstLp} and \eqref{BorneW13}, we obtain that
\begin{equation} \label{EstNablaMu}
\| \nabla \mu \|_{L^{2}({\mathcal F}(t))}
\leq C \big( \| \omega_{0} \|_{L^{\infty}({\mathcal F}_{0})} + | \ell_{0} | + |r_{0}| + \gamma \big).
\end{equation}
Hence with \eqref{EvoMatrice} we deduce the boundedness of $(\ell',r')$, which together with \eqref{DecompP} and \eqref{EstNablaMu}, gives the claim on $\nabla p$. The claim on $\partial_{t} u$ follows by using \eqref{Euler1}.
\end{proof}
\subsection{Uniqueness: proof of Theorem \ref{UniqYudo}}
\label{section:UniqYudo}
We now turn to the core of the proof of Theorem \ref{UniqYudo}. \par
Consider $(\ell_{1},r_{1},u_{1})$ and $(\ell_{2},r_{2},u_{2})$ two solutions in the sense of Theorem \ref{ThmYudo} defined on some time interval $[0,T]$. We associate correspondingly $h_{1}$ and $h_{2}$, ${\mathcal S}_{1}$ and ${\mathcal S}_{2}$, etc.
By a standard connectedness argument, it is sufficient to prove the uniqueness on an arbitrary small time interval $[0,\tilde{T}]$, so that we allow ourselves to choose $T>0$ small.
 We let $\tau_{1}$ and $\tau_{2}$ in $C^{2}([0,T];SE(2))$ the corresponding rigid movements associated to these solutions. For each $t \in [0,T]$ we introduce $\varphi_{t}$ and $\psi_{t}$ in $\mbox{Diff}(\overline{\Omega})$ by
\begin{equation*}
\varphi_{t} := \Psi[\tau_{2}(t)] \circ \{ \Psi[\tau_{1}(t)] \}^{-1}, \ \ \psi_{t}:= \varphi_{t}^{-1},
\end{equation*}
where $\Psi$ is defined in Proposition \ref{ProDiffeos}; we have chosen $T>0$ small enough so that $\tau_{1}(t)$ and $\tau_{2}(t)$ belong to $U$ for all $t$ in $[0,T]$. 
It is easily seen that $\varphi_{t}$ is volume preserving and sends ${\mathcal F}_{1}(t)$ into ${\mathcal F}_{2}(t)$. Now we define
\begin{equation} \label{DefU2tilde}
\tilde{u}_{2}(t,x) := [d \varphi_{t} (x)]^{-1} \cdot u_{2}(t,\varphi_{t}(x)) , \ x \in {\mathcal F}_{1}(t),
\end{equation}
the pullback of $u_{2}$ by $\varphi_{t}$, which is a solenoidal vector field on ${\mathcal F}_{1}(t)$, due to $\div u_{2}(t,\cdot)=0$ and the fact that $\varphi_{t}$ is volume preserving (see e.g. \cite[Proposition 2.4]{InoueWakimoto}). We also define
\begin{equation} \label{DefPL2tilde}
\tilde{p}_{2}(t,x) := p_{2}(t,\varphi_{t}(x))  , \ x \in {\mathcal F}_{1}(t), \ \text{ and } \
\tilde{\ell}_{2} := d(\tau_{1} \circ \tau_{2}^{-1}) \cdot \ell_{2} = Q_{1} \cdot Q_{2}^{-1} \cdot \ell_{2}.
\end{equation}
Obviously,
\begin{equation*}
u_{2}(t,x) = d \varphi_{t} (\psi_{t}(x)) \cdot \tilde{u}_{2}(t,\psi_{t}(x)) \ \text{ and } \ 
p_{2}(t,x) = \tilde{p}_{2}(t,\psi_{t}(x))  \ \text{ in } {\mathcal F}_{2}(t) .
\end{equation*}
Now to write the equation satisfied by $\tilde{u}_{2}$, we compute the partial derivatives of $u_{2}$ in terms of those of $\tilde{u}_{2}$. For convenience, we simplify the notations below as follows: an exponent $i$ designates the $i$-th component of a vector; we drop the index $2$ in $u_{2}$, $\tilde{u}_{2}$, $p_{2}$, $\tilde{p}_{2}$ and the index $t$ in $\varphi_{t}$ and $\psi_{t}$. Moreover we use Einstein's repeated indices convention and omit to write the variables with the following rules (which include the case where $\alpha$ is void so that there is no partial derivative):
\begin{gather}
\nonumber
\partial_{\alpha} u = \partial_{\alpha}u(t,x), \ \
\partial_{\alpha} \tilde{u} = \partial_{\alpha}\tilde{u}(t,\psi_{t}(x)), \ \ 
\partial_{\alpha} p = \partial_{\alpha}p(t,x), \ \
\partial_{\alpha} \tilde{p} = \partial_{\alpha}\tilde{p}(t,\psi_{t}(x)), \\ 
\label{conventions1}
\partial_{\alpha} \varphi = \partial_{\alpha}\varphi (t, \psi_{t}(x)), \ \ 
\partial_{\alpha}\psi = \partial_{\alpha}\psi (t,x).
\end{gather}
From
\begin{equation*}
u^{i} =  \partial_{k} \varphi^{i} \, \tilde{u}^{k},
\end{equation*}
we deduce
\begin{equation*}
\partial_{t} u^{i} =  \partial_{k} \varphi^{i} \, \partial_{t} \tilde{u}^{k}
+ \partial_{k} \varphi^{i} \, \partial_{l} \tilde{u}^{k} \, \partial_{t} \psi^{l}
+ (\partial_{t} \partial_{k} \varphi^{i}) \tilde{u}^{k} 
+ \partial^{2}_{kl} \varphi^{i} \, \partial_{t} \psi^{l}\, \tilde{u}^{k} ,
\end{equation*}
\begin{equation*}
\partial_{j} u^{i} =  \partial_{k} \varphi^{i} \, \partial_{l} \tilde{u}^{k}\, \partial_{j} \psi^{l}
+ (\partial^{2}_{lk} \varphi^{i}) \, \partial_{j} \psi^{l} \,  \tilde{u}^{k},
\end{equation*}
\begin{equation*}
\partial_{i} p = \partial_{k} \tilde{p} \, \partial_{i} \psi^{k}.
\end{equation*}
It follows that
\begin{eqnarray*}
(u \cdot \nabla) u^{i} &=& u^{j} \, \partial_{j} u^{i} \\
 &=& u^{j} ( \partial_{k} \varphi^{i} \, \partial_{l} \tilde{u}^{k} \partial_{j} \psi^{l}
+ (\partial^{2}_{lk} \varphi^{i}) \, \partial_{j} \psi^{l} \,  \tilde{u}^{k}) \\
&=& \partial_{m} \varphi^{j} \, \tilde{u}^{m} ( \partial_{k} \varphi^{i} \, \partial_{l} \tilde{u}^{k} \partial_{j} \psi^{l} + (\partial^{2}_{lk} \varphi^{i}) \, \partial_{j} \psi^{l} \,  \tilde{u}^{k}) \\
&=& \partial_{k} \varphi^{i} \, \tilde{u}^{l} \, \partial_{l} \tilde{u}^{k} 
+   \tilde{u}^{l}  (\partial^{2}_{lk} \varphi^{i}) \,  \tilde{u}^{k},
\end{eqnarray*}
where we used that
\begin{equation*}
\partial_{m} \varphi^{j} \, \partial_{j} \psi^{l} = \delta_{ml}.
\end{equation*}
Hence the equation of $\tilde{u}$ reads
\begin{align*}
0= \partial_{t} \tilde{u}^{i} &+ \tilde{u}^{j} \, \partial_{j} \tilde{u}^{i} + \partial_{i} \tilde{p} \\
&+ (\partial_{k} \varphi^{i} - \delta_{ik}) \partial_{t} \tilde{u}^{k} 
+ \partial_{k} \varphi^{i} \, \partial_{l} \tilde{u}^{k} \, (\partial_{t} \psi^{l}) + ( \partial_{k} \partial_{t} \varphi^{i}) \tilde{u}^{k} 
+ (\partial^{2}_{kl} \varphi^{i}) \, (\partial_{t} \psi^{l})\, \tilde{u}^{k} \\
&+ \tilde{u}^{l}  \, \partial_{l} \tilde{u}^{k} (\partial_{k} \varphi^{i}- \delta_{ik} )
+ \tilde{u}^{l}  (\partial^{2}_{lk} \varphi^{i}) \,  \tilde{u}^{k}\\
& + \partial_{k} \tilde{p} \, (\partial_{i} \psi^{k} - \delta_{ik}).
\end{align*}
In the above equation, all the factors between parentheses are small (in $L^{\infty}$ norm) whenever $\| \varphi_{t} - \Id \|_{C^{2}(\overline{\Omega})} + \| \partial_{t} \varphi_{t}\|_{C^{1}(\overline{\Omega})}$ is small. \par 
Now we define
\begin{gather}
\label{DefHat1}
\hat{u}(t,x) := u_{1}(t,x) - \tilde{u}_{2}(t,x) \ \text{ and } \ 
\hat{p}(t,x) := p_{1}(t,x) - \tilde{p}_{2}(t,x) \text{ in } \ {\mathcal F}_{1}(t), \\
\label{DefHat2}
\hat{h} := h_{1} -h_{2}, \ \hat{\theta} := \theta_{1} -\theta_{2}, \ 
\hat{\ell} := \ell_{1} -\tilde{\ell}_{2} \ \text{ and } \ \hat{r} := r_{1} -r_{2}.
\end{gather}
We deduce that
\begin{equation} \label{EulerDiff}
\partial_{t} \hat{u} + (u_{1} \cdot \nabla) \hat{u} + (\hat{u} \cdot \nabla) \tilde{u}_{2} + \nabla \hat{p} = \tilde{f} \ \text{ in } \ {\mathcal F}_{1}(t),
\end{equation}
with 
\begin{multline} \label{FTilde}
\tilde{f}^{i} =  (\partial_{k} \varphi^{i} - \delta_{ik}) \partial_{t} \tilde{u}_{2}^{k} 
+ \partial_{k} \varphi^{i} \, \partial_{l} \tilde{u}_{2}^{k} \, (\partial_{t} \psi^{l}) + ( \partial_{k} \partial_{t} \varphi^{i}) \tilde{u}_{2}^{k} 
+ (\partial^{2}_{kl} \varphi^{i}) \, (\partial_{t} \psi^{l})\, \tilde{u}_{2}^{k}  \\
+ \tilde{u}_{2}^{l}  \, \partial_{l} \tilde{u}_{2}^{k} (\partial_{k} \varphi^{i}- \delta_{ik} )
+ (\partial^{2}_{lk} \varphi^{i}) \,  \tilde{u}^{l} \,  \tilde{u}^{k}  
+ \partial_{k} \tilde{p}_{2} \, (\partial_{i} \psi^{k} - \delta_{ik}).
\end{multline}
Now we proceed by an energy estimate. Multiplying \eqref{EulerDiff} by $\hat{u}$ and integrating over ${\mathcal F}_{1}(t)$, we deduce
\begin{equation} \label{EnergieBase}
\int_{{\mathcal F}_{1}(t)} (\partial_{t} \hat{u} + (u_{1} \cdot \nabla) \hat{u}) \cdot \hat{u} \, dx
+ \int_{{\mathcal F}_{1}(t)} \hat{u} \cdot (\hat{u} \cdot \nabla) \tilde{u}_{2} \, dx
+ \int_{{\mathcal F}_{1}(t)} \hat{u} \cdot \nabla \hat{p} \, dx
=  \int_{{\mathcal F}_{1}(t)} \hat{u} \cdot \tilde{f} \, dx.
\end{equation}
Concerning the first term on the left hand side, using that ${\mathcal F}_{1}(t)$ is transported by the flow associated to $u_{1}$, we infer that
\begin{equation*}
\int_{{\mathcal F}_{1}(t)} (\partial_{t} \hat{u} + (u_{1} \cdot \nabla) \hat{u}) \cdot \hat{u} \, dx
= \frac{d}{dt} \int_{{\mathcal F}_{1}(t)} \frac{|\hat{u}|^{2}}{2} \, dx.
\end{equation*}
For what concerns the second term in \eqref{EnergieBase}, we use Proposition \ref{PropAPrioriEstimates}: there exists a constant $C>0$ such that for any $q \in [2,\infty)$ one has
\begin{equation*}
\| \nabla \tilde{u}_{2} \|_{L^{q}({\mathcal F}_{1}(t))} \leq C q \big( \| \omega_{0} \|_{L^{\infty}({\mathcal F}_{0})} + | \ell_{0} | + |r_{0}| + \gamma \big).
\end{equation*}
It follows that for some $C_{0} = C(\| \omega_{0} \|_{L^{\infty}({\mathcal F}_{0})} + | \ell_{0} | + |r_{0}| + \gamma)$, one has
\begin{eqnarray*}
\left| \int_{{\mathcal F}_{1}(t)} \hat{u} \cdot (\hat{u} \cdot \nabla) \tilde{u}_{2} \, dx \, \right|
\leq \| \nabla \tilde{u}_{2} \|_{L^q} \| \hat{u}^{2} \|_{L^{q'}} 
\leq C_{0} q \, \| \hat{u}^{2} \|_{L^{2}}^{\frac{2}{q'}}  .
\end{eqnarray*}
Let us turn to the third term in \eqref{EnergieBase}. We first note that, due to \eqref{PsiVolumePreserving} and \eqref{EqPsiTau}, one has $\div \hat{u}=0$ in ${\mathcal F}_{1}(t)$, $u_{1} \cdot n =\tilde{u}_{2} \cdot n=0$ on $\partial \Omega$, and 
\begin{equation*}
\tilde{u}_{2}(t,x) \cdot n_{1}(t,x) 
= (u_{2} \cdot n_{2}) (t, \tau_{2} \circ \tau_{1}^{-1}(x)) 
=(\tilde{\ell}_{2} + r_{2}(x-h_{1}(t))^{\perp}) \cdot n_{1}(t,x)
\text{ on } \ \partial {\mathcal S}_{1}(t),
\end{equation*}
where $n_{i}$ is the normal on $\partial {\mathcal S}_{i}(t)$, $i=1,2$.
It follows that
\begin{eqnarray}
\nonumber
\int_{{\mathcal F}_{1}(t)} \hat{u} \cdot \nabla \hat{p} \, dx 
&=& \int_{\partial {\mathcal S}_{1}(t)} \hat{p} (\hat{u} \cdot n_{1}) \, d \sigma \\
\nonumber
&=& \int_{\partial {\mathcal S}_{1}(t)} \hat{p} (\hat{\ell} + \hat{r} (x-h_{1}(t))^{\perp}) \cdot n_{1} \, d \sigma \\
\nonumber
&=& \begin{pmatrix} \hat{\ell} \\ \hat{r} \end{pmatrix} \cdot
\int_{\partial {\mathcal S}_{1}(t)} \hat{p}  \begin{pmatrix} n_{1} \\ (x-h_{1}(t))^{\perp} \cdot n_{1} \end{pmatrix}  \, d \sigma  \\
\label{resultantedepression}
&=& \begin{pmatrix} \hat{\ell} \\ \hat{r} \end{pmatrix} \cdot
\begin{pmatrix} m \hat{\ell}'  + m \hat{r} \tilde{\ell}_{2}^{\perp} \\ {\mathcal J} \hat{r}' \end{pmatrix} 
= \frac{1}{2} \frac{d}{dt} \big( m |\hat{\ell}|^{2} + {\mathcal J} |\hat{r}|^{2} \big) - m \hat{r} \hat{\ell} \cdot \tilde{\ell}_{2}^{\perp}.
\end{eqnarray}
We used that
\begin{equation*}
m \tilde{\ell}_{2} ' = \int_{\partial {\mathcal S}_{1}(t)} \tilde{p}_{2} n_{1} \, d \sigma + m \hat{r} \tilde{\ell}_{2}^{\perp}, 
\quad {\mathcal J}  \tilde{r}_{2} ' = \int_{\partial {\mathcal S}_{1}(t)} \tilde{p}_{2} (x-h_{1}(t))^{\perp} \cdot n_{1} \, d \sigma.
\end{equation*}
We estimate the last term in \eqref{resultantedepression} by
\begin{equation}
\label{termerelou}
| m \hat{r} \hat{\ell} \cdot \tilde{\ell}_{2}^{\perp} | \leq C(\ell_{0},r_{0},u_{0}) [|\hat{\ell}|^{2} +  |\hat{r}|^{2}].
\end{equation}
Concerning the right hand side in \eqref{EnergieBase}, we see that 
\begin{multline} \label{EstRHS}
\left| \int_{{\mathcal F}_{1}(t)} \hat{u} \cdot \tilde{f} \, dx \, \right|
\leq C \| \hat{u}(t) \|_{L^{2}({\mathcal F}_{1}(t))} \,
\Big[ \| \varphi_{t} - \Id \|_{C^{2}(\overline{\Omega})} + \| \partial_{t} \varphi_{t} \|_{C^{1}(\overline{\Omega})} \Big] \\
\times \Big( 1+ \| \partial_{t} \tilde{u}_{2}(t) \|_{L^{2}({\mathcal F}_{1}(t))} 
+ \| \tilde{u}_{2}(t) \|^{2}_{H^{1}({\mathcal F}_{1}(t))} 
+ \| \nabla \tilde{p}_{2}(t) \|_{L^{2}({\mathcal F}_{1}(t))} \Big).
\end{multline}
Using Corollaries \ref{CorEstPsi} and \ref{CorAPrioriEstimates} we obtain
\begin{equation*}
\left| \int_{{\mathcal F}_{1}(t)} \hat{u} \cdot \tilde{f} \, dx \, \right| \leq 
C (\Psi, \ell_{0},r_{0},u_{0}) \, \| \hat{u}(t) \|_{L^{2}({\mathcal F}_{1}(t))} \, 
\big( \| (\hat{h},\hat{\theta})(t) \|_{\R^{3}}  + \| (\hat{\ell},\hat{r})(t) \|_{\R^{3}} \big).
\end{equation*}
Summing up, we obtain that for any $q \in [2,\infty)$:
\begin{equation*}
\frac{d}{dt} \left( \| \hat{u}\|^{2}_{L^{2}({\mathcal F}_{1}(t))} + |\hat{\ell}|^{2} + |\hat{r}|^{2} \right) 
\leq C_{0} \left( q \| \hat{u}\|^{\frac{2}{q'}}_{L^{2}({\mathcal F}_{1}(t))} + \| \hat{u}\|^{2}_{L^{2}({\mathcal F}_{1}(t))} + |\hat{\ell}|^{2} + |\hat{r}|^{2}  + |\hat{h}|^{2} + |\hat{\theta}|^{2} \right).
\end{equation*}
Concerning the solid movement, we have
\begin{equation}
\label{solid}
| \hat{h}' | = | \ell_{1} - \ell_{2} | \leq | \ell_{1} - \tilde{\ell}_{2} | +  | \ell_{2} - \tilde{\ell}_{2} | \leq C ( |\hat{\ell}| + |\hat{\theta}|),
\end{equation}
so
\begin{equation*}
\frac{d}{dt} \left( |\hat{h}|^{2} + |\hat{\theta}|^{2} \right) \leq C \big(|\hat{\ell}|^{2} + |\hat{r}|^{2}  + |\hat{h}|^{2} + |\hat{\theta}|^{2}\big).
\end{equation*}
Hence we obtain that
\begin{eqnarray*} 
\frac{d}{dt} \left( \| \hat{u}\|^{2}_{L^{2}({\mathcal F}_{1}(t))} + |\hat{\ell}|^{2} + |\hat{r}|^{2} + |\hat{h}|^{2} + |\hat{\theta}|^{2} \right) 
& \leq C_{1} \left( q \| \hat{u}\|^{\frac{2}{q'}}_{L^{2}({\mathcal F}_{1}(t))} + |\hat{\ell}|^{2} + |\hat{r}|^{2}  + |\hat{h}|^{2} + |\hat{\theta}|^{2} \right) \\
& \leq C_{1} q \left( \| \hat{u}\|^{2}_{L^{2}({\mathcal F}_{1}(t))} + |\hat{\ell}|^{2} + |\hat{r}|^{2} + |\hat{h}|^{2} + |\hat{\theta}|^{2} \right)^\frac{1}{q'},
\end{eqnarray*}
by considering $T$ sufficiently small so that the parenthesis in the right hand side is not larger than $1$. 
Since the unique solution of $y' = N y^\frac{1}{q'}$ with $y(0)= \varepsilon>0$ and $N >0$ is given by
\begin{equation*}
y(t) = \left[ \frac{Nt}{q} + \varepsilon^{\frac{1}{q}} \right]^{q},
\end{equation*}
a comparison argument proves that 
\begin{equation*}
\| \hat{u} \|_{L^{2}}^{2} + | \hat{\ell} |^{2} +  |\hat{r}|^{2}  + |\hat{h}|^{2} + |\hat{\theta}|^{2} \leq (C_{1} t)^{q},
\end{equation*}
and we conclude that $\hat{h}=0$, $\hat{\theta}=0$ and $\hat{u}=0$ for $t< 1/C_{1}$ by letting $q \rightarrow +\infty$. 
%
%
%
%
%
%
%
%
%
%
%
%
\section{Proof of Theorem \ref{UniqLeray}}
\label{Section:Leray}
We now turn to the viscous system.
\subsection{A priori estimates}
\label{Section:PrioriLeray} 
We begin by giving a priori estimates on a solution given by Theorem \ref{ThmLeray}. 
Therefore we assume in the sequel that $(\ell,r,u)$ is a solution as given by Theorem \ref{ThmLeray} on $[0,T]$, $T>0$. 
Let us call ${\mathcal F}(t)$ and ${\mathcal S}(t)$ the corresponding fluid and solid domains, $h$, $\theta$ the associated center of mass and angle, and $\overline{u}$ given by \eqref{globally}. We also introduce
\begin{equation} \label{DefRho2}
\rho (t,x) = \rho_{{\mathcal S}}(t,x) := \rho_{{\mathcal S}_{0}} ( (\tau^{\ell,r} (t,\cdot))^{-1}  (x) )
\text{ in } {\mathcal S}(t)
\ \text{ and } \ 
\rho(t,x) = \rho_{{\mathcal F}}= 1 \text{ in } {\mathcal F}(t),
\end{equation}
and
\begin{equation*}
u_{{\mathcal S}}(t,x) := \ell(t) + r(t) (x-h(t))^{\perp}.
\end{equation*}
We will also use, for $T>0$,  the notation 
\begin{equation*}
{\mathcal F}_T := \cup_{t \in (0,T)} \{ t \} \times {\mathcal F}(t) .
\end{equation*}
Moreover we assume that $\dist ({\mathcal S}(t),\partial \Omega) > 0 $ on $[0,T]$. \par
\ \par
The first a priori estimate is the following.
\begin{Lemma} \label{penh}
There holds
\begin{equation*}
((u \cdot \nabla) u ,u ) \in L^{\frac{4}{3}}({\mathcal F}_T ,  \R^{4}).
\end{equation*}
\end{Lemma}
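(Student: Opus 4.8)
The plan is to exploit the two-dimensional Ladyzhenskaya (Gagliardo--Nirenberg) interpolation inequality together with the energy class of the weak solution. The starting point is that, by Definition \ref{DefSolusLeray} and Theorem \ref{ThmLeray}, the global field $\overline{u}$ lies in $L^{\infty}(0,T;L^{2}(\Omega)) \cap L^{2}(0,T;H^{1}(\Omega))$, and $u$ is simply its restriction to ${\mathcal F}(t)$. Since $\Omega$ is a bounded smooth domain in $\R^{2}$, the inequality
$$\| v \|_{L^{4}(\Omega)}^{2} \leq C \| v \|_{L^{2}(\Omega)} \| v \|_{H^{1}(\Omega)}$$
holds for every $v \in H^{1}(\Omega)$, with a constant $C$ depending only on $\Omega$.

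First I would apply this to $v = \overline{u}(t,\cdot)$ for almost every $t$, raise it to the fourth power and integrate in time, bounding
$$\int_{0}^{T} \| \overline{u}(t,\cdot) \|_{L^{4}(\Omega)}^{4} \, dt \leq C \, \| \overline{u} \|_{L^{\infty}(0,T;L^{2}(\Omega))}^{2} \int_{0}^{T} \| \overline{u}(t,\cdot) \|_{H^{1}(\Omega)}^{2} \, dt < +\infty .$$
Restricting to ${\mathcal F}_T \subset (0,T) \times \Omega$ then yields $u \in L^{4}({\mathcal F}_T)$.

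The conclusion for the convection term follows by a single application of H\"older's inequality: since $\tfrac{3}{4} = \tfrac{1}{4} + \tfrac{1}{2}$, one has
$$\| (u \cdot \nabla) u \|_{L^{4/3}({\mathcal F}_T)} \leq \| u \|_{L^{4}({\mathcal F}_T)} \, \| \nabla u \|_{L^{2}({\mathcal F}_T)},$$
where $\| \nabla u \|_{L^{2}({\mathcal F}_T)} \leq \| \nabla \overline{u} \|_{L^{2}((0,T)\times \Omega)} < +\infty$ is controlled by the energy bound. For the component $u$ itself, since ${\mathcal F}_T$ has finite measure we have $L^{2}({\mathcal F}_T) \hookrightarrow L^{4/3}({\mathcal F}_T)$, so $u \in L^{4/3}({\mathcal F}_T)$ is immediate from $u \in L^{\infty}(0,T;L^{2}) \subset L^{2}({\mathcal F}_T)$. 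This establishes both claims.

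There is no serious obstacle here; the only points requiring a little care are that the interpolation inequality be applied to the global field $\overline{u} \in H^{1}(\Omega)$ (rather than to $u$ on the moving domain ${\mathcal F}(t)$, for which no direct boundary control is available), with a constant uniform in $t$, and that all estimates be carried out on $(0,T) \times \Omega$ before restricting to ${\mathcal F}_T$. The hypothesis $\dist({\mathcal S}(t),\partial \Omega) > 0$ plays no role in this particular estimate.
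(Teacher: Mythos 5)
Your argument is correct and follows exactly the route the paper intends: its proof is left to the reader as a classical consequence of the boundedness of $\Omega$, H\"older's inequality and Sobolev embeddings, which is precisely the Ladyzhenskaya interpolation plus H\"older scheme you carry out (with the sensible precaution of working with the global field $\overline{u}$ on the fixed domain $\Omega$). No gaps; this matches the paper's proof in substance, just written out in detail.
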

\begin{proof}
The proof is left to the reader as it follows classically from the boundedness of $\Omega$, from the H\"older inequality and from Sobolev embeddings.
\end{proof}
The second a priori estimate uses the smoothing effect induced by the viscosity.
\begin{Proposition} \label{PropAPrioriEstimatesNS}
There holds
\begin{equation*}
t u  \in  L^{\frac{4}{3}} (0,T; W^{2,\frac{4}{3}} ({\mathcal F}(t)) ), \quad 
( t \partial_{t} {u}, t \nabla {p})  \in L^{\frac{4}{3}} ({\mathcal F}_T ; \R^{4}).
\end{equation*}
\end{Proposition}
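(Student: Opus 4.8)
The plan is to view $u$ as the solution of a nonstationary Stokes problem in the moving fluid domain, coupled to the rigid body through \eqref{Solide1NS}--\eqref{Solide2NS}, whose forcing is the convective term $(u\cdot\nabla)u$; by Lemma \ref{penh} this forcing lies in $L^{4/3}(\mathcal{F}_T)$. Since the initial datum $u_0$ is only $L^2$, it does not belong to the trace space for $L^{4/3}$--maximal regularity, so one multiplies the unknown by $t$ in order to start from vanishing data; the price is an extra forcing term equal to $u$ itself, which by the energy identity \eqref{EE} together with the two--dimensional Ladyzhenskaya inequality $L^\infty_t L^2_x\cap L^2_t H^1_x\hookrightarrow L^4(\mathcal{F}_T)$ also lies in $L^{4/3}(\mathcal{F}_T)$. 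Thus $w:=tu$ solves the same Stokes--rigid body system with zero initial data and right--hand side $u-t\,(u\cdot\nabla)u\in L^{4/3}$, and the whole point is to apply $L^{4/3}$--maximal regularity to $w$ and then recover $t\partial_t u=\partial_t w - u$ and $t\nabla p$.

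Because the fluid domain moves, I would first transport the problem onto the fixed geometry $\mathcal{F}_0$ by the volume--preserving diffeomorphism $\Psi[\tau^{\ell,r}(t)]$ of Proposition \ref{ProDiffeos}, which coincides with the rigid motion near $\partial\mathcal{S}(t)$ and with the identity near $\partial\Omega$; this turns the no--slip and rigid boundary conditions into time--independent ones. The pulled--back velocity $v$ then solves a perturbed Stokes system $\partial_t v-\Delta v+\nabla q = F$, $\div v=0$, coupled to the transformed solid equations, where $F$ gathers the convective term and the commutators produced by the change of variables. The latter are schematically of the form $(\partial_x\Psi-\Id)\,\partial^2 v$, $(\partial^2_x\Psi)\,\partial v$ and $(\partial_t\Psi)\,\partial v$, with coefficients that are smooth functions of $\tau^{\ell,r}(t)$; by Corollary \ref{CorEstPsi} their time derivatives are controlled by the rigid velocity $(\ell,r)$ alone, with no need of $(\ell',r')$, and are therefore bounded since $(\ell,r)\in C^0$.

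Next I would invoke $L^{4/3}$--maximal regularity for the monolithic fluid--solid Stokes operator on the fixed domain with vanishing initial data, which yields $w\in L^{4/3}(0,T;W^{2,4/3}(\mathcal{F}_0))$, $\partial_t w\in L^{4/3}$ and $\nabla(tq)\in L^{4/3}$. The first--order commutator terms in $F$ are lower order and are absorbed using the energy bound $v\in L^2(0,T;H^1)$, while the second--order term $(\partial_x\Psi-\Id)\,\partial^2 v$ is handled by partitioning $[0,T]$ into subintervals on which $\tau^{\ell,r}(t)$ stays close to a fixed configuration, so that $\partial_x\Psi-\Id$ is small in $L^\infty$ and can be absorbed into the left--hand side. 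Transforming back by $\Psi$, whose derivatives are uniformly bounded by Corollary \ref{CorEstPsi}, the estimates pass to the moving domain and give $tu\in L^{4/3}(0,T;W^{2,4/3}(\mathcal{F}(t)))$ together with $t\partial_t u=\partial_t w-u\in L^{4/3}(\mathcal{F}_T)$ and $t\nabla p\in L^{4/3}(\mathcal{F}_T)$, which is the claim.

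The main obstacle is precisely the $L^{4/3}$--maximal regularity of the coupled fluid--solid Stokes operator together with the careful bookkeeping of the change--of--variables terms: one must check that the principal--part perturbation $(\partial_x\Psi-\Id)\,\partial^2 v$ is genuinely absorbable (which forces the smallness/time--partition argument above, or a sharp maximal regularity with continuous--in--time coefficients), and that the coupling with Newton's equations \eqref{Solide1NS}--\eqref{Solide2NS} does not spoil the estimate; here the boundedness of $(\ell,r)$ and the coercivity of the added--mass operator are what make the argument close.
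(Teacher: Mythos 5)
Your overall strategy coincides with the paper's: set $w:=tu$, observe that it solves the linear fluid--solid Stokes system with vanishing initial data and right-hand side $u-t\,(u\cdot\nabla)u\in L^{4/3}({\mathcal F}_T)$ (Lemma \ref{penh}), transport the problem to the fixed domain ${\mathcal F}_0$ via $\Psi[\tau^{\ell,r}(t)]$ from Proposition \ref{ProDiffeos}, and invoke $L^{4/3}$ maximal regularity for the coupled Stokes--rigid body operator; your smallness/time-partition treatment of the commutator terms is essentially how the paper's Lemma \ref{maximal} (adapted from \cite{GGH}) is established.

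There is, however, a genuine gap at the decisive step ``apply $L^{4/3}$--maximal regularity to $w$.'' A maximal regularity theorem is a statement about the solution it constructs: for the given data it produces \emph{some} solution $v$ with $v\in L^{4/3}(0,T;W^{2,4/3})$, $\partial_t v,\nabla q\in L^{4/3}$, unique \emph{within that regular class}. It says nothing by itself about $w=tu$, which is built from a Leray weak solution: $u$ has no time derivative, second derivatives, or pressure as functions, so the only sense in which $w$ ``solves'' the auxiliary system is a weak, pressure-free formulation against divergence-free test functions that are rigid on the solid --- this is the paper's Definition \ref{DefWeakStokes}, and checking that $w$ satisfies it (starting from \eqref{weakformulation} with test function $t\phi$) is the content of Lemma \ref{fortdonnefaible}. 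To conclude $w=v$, which is what transfers the regularity to $tu$ and to $t\nabla p$, one needs uniqueness in a class large enough to contain \emph{both} solutions, i.e. a weak--strong uniqueness result for the linear auxiliary system. This is exactly the paper's Lemma \ref{UniciteFaible} (proved by an energy estimate and Gronwall's lemma, after noting via Lemma \ref{SimpliesW} that the strong solution is also a weak one), and it is the main piece of work in the proof beyond citing \cite{GGH}. Without it, your argument shows that there \emph{exists} a regular solution with the same data as $w$, not that $w$ itself is that solution; and you cannot instead run the maximal regularity inequality as an a priori estimate directly on $w$, because the manipulations it requires (multiplying the equation by objects involving $\partial_t w$, $\nabla^2 w$, recovering the pressure, etc.) are not justified at Leray regularity.
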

The proof of Proposition \ref{PropAPrioriEstimatesNS} is rather lengthy. Therefore we first give a sketch of proof before to go into the details.
\begin{proof}[Sketch of proof of Proposition \ref{PropAPrioriEstimatesNS}]
The proof relies in a crucial way on the following auxiliary system with unknown $(\mathfrak{l},\mathfrak{r},v)$:
\begin{gather}
\label{S1}
\frac{\partial v}{\partial t} -  \Delta v + \nabla q = g \ \text{ for } \ x \in \mathcal{F}(t), \\
\label{S2}
\div v = 0 \ \text{ for } \  x \in \mathcal{F}(t) ,  \\
\label{S3}
v =  v_\mathcal{S} \ \text{ for } \ x\in \partial \mathcal{S}(t),  \\
\label{S4}
v =  0 \ \text{ for } \ x \in \partial \Omega,  \\
\label{Solide1S}
m  \mathfrak{l} ' (t) = - \int_{ \partial \mathcal{S} (t)} {\T (v,p)} n \, d \sigma + m {g}_1,  \\
\label{Solide2S}
\mathcal{J}    \mathfrak{r}' (t) =   - \int_{ \partial   \mathcal{S} (t)} {\T (v,p)}  n \cdot (x-  h (t) )^\perp \, d \sigma +  \mathcal{J} {g}_2 , \\
\label{vS}
v_{{\mathcal S}} (t,x) := \mathfrak{l} + \mathfrak{r} ( x- {h}(t))^{\perp}, 
\end{gather}
where $g$, ${g}_1$ and ${g}_2$ are some source terms and where the fluid and solid domains $\mathcal{F}(t)$ and $\mathcal{S}(t)$ are prescribed and therefore not unknown. Actually, $\mathcal{F}(t)$ and $\mathcal{S}(t)$ are associated to the solution  $(\ell,r,u)$ above. We keep the notation
\begin{equation*}
h(t) = \int_{0}^{t} \ell \ \text{ and } \ u_{{\mathcal S}}(t,x) := \ell(t) + r(t) (x-h(t))^{\perp}.
\end{equation*}
Let us now explain how this system enters into the game. We define
\begin{equation} \label{checkthat}
 v := t u , \quad  q := t p , \quad  \mathfrak{l} := t \ell ,\quad \text{ and }  \mathfrak{r} := t r .
\end{equation}
From the equations \eqref{NS1}-\eqref{Solide2NS} we infer that $(\mathfrak{l}, \mathfrak{r},v)$ is a  solution of \eqref{S1}-\eqref{vS}, in a weak sense which will be given in Definition \ref{DefWeakStokes}, with vanishing initial data 
and with, as source terms,
\begin{equation} \label{checkthatsource}
g:= u - t (u \cdot \nabla) u \in L^{\frac{4}{3}} ({\mathcal F}_{T}; \R^{2})
\ \text{ and } \ 
({g}_1, {g}_2) := (\ell, r) \in L^{\frac{4}{3}} (0,T; \R^{2} \times \R).
\end{equation}
The regularity of $g$ follows from Lemma \ref{penh}.
Then we have the following result about the existence of regular solutions to the system \eqref{S1}-\eqref{vS}.
\begin{Lemma} \label{maximal}
There exists a unique solution of \eqref{S1}-\eqref{Solide2S} on $[0,T]$ with vanishing initial data which satisfies
\begin{equation} \label{regv}
v \in  L^{\frac{4}{3}} (0,T; W^{2,\frac{4}{3}} ({\mathcal F}(t)) ), \quad 
( \partial_{t} v, \nabla q)  \in L^{\frac{4}{3}} ( {\mathcal F}_T; \R^{4}), \quad
(\mathfrak{l},  \mathfrak{r}) \in   W^{1,\frac{4}{3}} ((0,T); \R^{3}).
\end{equation}
\end{Lemma}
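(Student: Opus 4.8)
The plan is to treat the system \eqref{S1}--\eqref{Solide2S} as a linear, non-autonomous evolution problem and to obtain the maximal-regularity estimate \eqref{regv} by combining a functional-analytic framework adapted to the fluid-solid coupling with the $L^p$ maximal-regularity theory for the Stokes operator. First I would reformulate the problem on the fixed reference configuration ${\mathcal F}_0$ by pulling back through the diffeomorphism $\Psi[\tau^{\ell,r}(t)]$ furnished by Proposition \ref{ProDiffeos}: since $\Psi$ is volume-preserving and coincides with the rigid motion near the solid and with the identity near $\partial\Omega$, the pullback turns the moving domain into the fixed domain ${\mathcal F}_0$ at the cost of introducing lower-order, time-dependent coefficients in the principal part of the operator, with the regularity estimates on these coefficients controlled by Corollary \ref{CorEstPsi}. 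The crucial gain is that because $\dist({\mathcal S}(t),\partial\Omega)>0$ on $[0,T]$ and $(\ell,r)$ are continuous, the diffeomorphisms $\Psi[\tau^{\ell,r}(t)]$ stay in a compact family, so these coefficients are uniformly bounded and the perturbation is controlled independently of $t$.

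The heart of the matter is the functional setting for the \emph{coupled} Stokes system. I would introduce the solenoidal Hilbert (or rather $L^{4/3}$-Banach) space of fields that are rigid inside ${\mathcal S}(t)$, namely the space of $\overline v \in L^{4/3}(\Omega)$ with $\div \overline v = 0$ and $D\overline v = 0$ on ${\mathcal S}(t)$, equipped with the density-weighted inner product $\int_\Omega \rho\, \overline v \cdot \overline w$. On this space the coupled operator $-\mathbb P(\rho^{-1}\Delta)$, where $\mathbb P$ is the associated Leray-type projection respecting the rigidity constraint, generates an analytic semigroup; this is by now classical and I would cite the strong-solution theory (e.g. \cite{GM,Tak}) for the generation and sectoriality. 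The key analytic input is that this operator enjoys $L^{4/3}$ maximal regularity in time: for a source $g\in L^{4/3}({\mathcal F}_T)$ and $(g_1,g_2)\in L^{4/3}(0,T)$ with vanishing initial data, the solution satisfies $\partial_t v \in L^{4/3}$, $v\in L^{4/3}(0,T;W^{2,4/3})$, and $(\mathfrak l,\mathfrak r)\in W^{1,4/3}$, with the pressure gradient recovered from the equation. Uniqueness follows from the same estimate applied to the difference of two solutions with zero data, since maximal regularity gives an a priori bound forcing the difference to vanish.

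The main obstacle I anticipate is \emph{not} the generation of the semigroup but the uniform control of the time-dependent principal part after the change of variables, together with the recovery of the solid equations \eqref{Solide1S}--\eqref{Solide2S} at the correct regularity. The boundary terms $\int_{\partial{\mathcal S}(t)}\T(v,p)\,n\,d\sigma$ only make sense once $v$ is known to lie in $W^{2,4/3}$ and $\nabla q$ in $L^{4/3}$; one must therefore set up the weak formulation (Definition \ref{DefWeakStokes}, to which the statement refers) so that the solid's Newton equations are encoded variationally against test fields that are rigid on the solid, and only \emph{a posteriori} integrate by parts to read off \eqref{Solide1S}--\eqref{Solide2S} once the maximal-regularity estimate \eqref{regv} is in hand. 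A secondary difficulty is that $4/3<2$, so one cannot work in the Hilbert framework alone and must invoke genuine $L^p$ maximal regularity; I would handle this by the standard $\mathcal R$-boundedness / Weis-type criterion for the coupled Stokes operator, or alternatively by interpolation and duality from the Hilbert case, taking care that the compatibility and rigidity constraints are preserved throughout.
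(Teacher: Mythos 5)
Your plan has the same skeleton as the paper's proof: pull the system back to the fixed domain ${\mathcal F}_0$ via the diffeomorphisms of Proposition \ref{ProDiffeos}, establish maximal $L^{4/3}$ regularity for the resulting linear coupled Stokes--rigid body system on ${\mathcal F}_0$, treat the terms created by the change of variables as a perturbation, and get uniqueness from linearity. Where you differ is in the linear core: the paper does not construct a semigroup at all, but adapts \cite[Theorem 4.1]{GGH} (maximal regularity for the Stokes problem with inhomogeneous Dirichlet data combined with added-mass effects) into Lemma \ref{maximalstokes}, valid for every $q \in (1,\infty)$. Your route through the coupled operator on the space of fields that are rigid on the solid is a legitimate alternative, but note that the references you invoke for generation and sectoriality (\cite{GM,Tak}) are purely Hilbertian, and your fallback of ``interpolation and duality from the Hilbert case'' does not work here: the regularity \eqref{regv} requires the spatial exponent to move with the time exponent, so what is needed is $\mathcal{R}$-sectoriality of the coupled operator on $L^{4/3}(\Omega)$ --- which is precisely the content of the $L^{p}$-theory of \cite{GGH} that the paper cites. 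In effect your plan re-proves that theorem rather than using it.

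The genuine gap is in your treatment of the transformed terms. They are \emph{not} lower order: pulling back $\Delta$ through $\Psi[\tau^{\ell,r}(t)]$ produces, besides first- and zeroth-order terms, genuinely second-order terms of the form $(\partial_{k} \varphi^{i} \, \partial_{j} \psi^{m} \, \partial_{j} \psi^{l} - \delta_{ik} \delta_{jm}\delta_{jl} ) \, \partial^{2}_{ml} \tilde{v}^{k}$, exactly as displayed for $\tilde{u}_{2}$ in Section \ref{section:UniqLeray}. Compactness of the family $\{\Psi[\tau^{\ell,r}(t)]\}_{t \in [0,T]}$ yields only uniform \emph{boundedness} of these coefficients, and a bounded second-order perturbation cannot be absorbed by a maximal-regularity estimate; one needs \emph{smallness}. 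The paper obtains it by restricting to a time interval on which $\tau^{\ell,r}(t)$ stays close to $\Id$, so that $\| \varphi_{t} - \Id \|_{C^{2}(\overline{\Omega})}$ is small by the smoothness of $\Psi$ (Proposition \ref{ProDiffeos} and Corollary \ref{CorEstPsi}), putting all error terms on the right-hand side, running a fixed-point argument in the class \eqref{regtildev} with $q=\tfrac43$, and then gluing the solutions obtained on successive subintervals to cover all of $[0,T]$ (this is the scheme of \cite[Sections 5--7]{GGH}). This small-time-plus-gluing structure is also unavoidable for a more basic reason: $\Psi$ is only defined on a compact neighborhood $U$ of $\Id$ in $SE(2)$, so for large $t$ the map $\Psi[\tau^{\ell,r}(t)]$ need not even be defined and the change of variables must be re-centered. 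Your plan, as written, contains no mechanism producing this smallness, so the perturbation step would fail.
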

Lemma \ref{maximal} is an adaptation of \cite[Theorem 2.4]{GGH}. 
We will briefly explain how to modify the analysis in \cite{GGH} in order to prove Lemma \ref{maximal}. \par
Finally we will prove a result of uniqueness for weak solutions of the system \eqref{S1}-\eqref{vS} so that 
$(\mathfrak{l}, \mathfrak{r}, v)$ will also satisfy the estimates given by Lemma \ref{maximal}, which achieves the proof of Proposition \ref{PropAPrioriEstimatesNS}.
\end{proof}
The rest of Subsection \ref{Section:PrioriLeray} is devoted to the completion of the proof of Proposition \ref{PropAPrioriEstimatesNS}. \par
\subsubsection{Notion of weak solutions of the auxiliary system}
\label{Section:auxiliary} 
Similarly to the definition of $\overline{u}$ in  \eqref{globally}, we introduce  the vector field $g$ defined on $\Omega$ and associated to $g,g_1$ and $g_2$
 by
\begin{equation*}
\overline{g} (t,x) :=  g (t,x) \ \text{ for } \ x \in {\mathcal F}(t) 
\ \text{ and } \ 
\overline{g} (t,x) := g_1 (t) + g_2 (t) (x-h (t))^{\perp}  \ \text{ for } \ x \in {\mathcal S}(t) .
\end{equation*}
\begin{Definition} \label{DefWeakStokes}
Given $g \in L^{\frac{4}{3}}({\mathcal F}_{T})$, $({g}_1, {g}_2) \in L^{\frac{4}{3}} (0,T; \R^{2} \times \R) $, we say that 
\begin{equation} \label{reguW}
(\mathfrak{l}, \mathfrak{r}, v) \in  C ([0,T]; \R^2 \times \R) \times 
[ C([0,T]; L^2 ({\mathcal F}(t))) \cap L^2 (0,T; H^{1}({\mathcal F}(t)))]
\end{equation}
is a weak solution of \eqref{S1}-\eqref{vS} with vanishing initial data and with source term $(g,{g}_1, {g}_2)$ if defining $\overline{v}$ by
\begin{equation} \label{globally2}
\overline{v} (t,x) := v(t,x) \ \text{ for } \ x \in {\mathcal F}(t) 
\ \text{ and } \ 
\overline{v} (t,x) := v_\mathcal{S} (t,x) \ \text{ for } \ x \in {\mathcal S}(t), \\
\end{equation}
where $ v_\mathcal{S}$ is given by  \eqref{vS},
one has:
\begin{itemize}
\item the vector field $\overline{v}$ belongs to $L^{2}(0,T;H^{1}(\Omega))$ and is divergence free,
\item for any divergence free vector field $\phi \in C^\infty_{c} ([0,T] \times \Omega ;\R^2 )$ such that $D\phi (t,x)= 0 $ when $t \in [0,T]$ and $x \in \mathcal{S}(t)$, there holds:
\begin{multline} \label{Really}
- \int_{\Omega }  (\rho  \overline{v} \cdot  \phi )\vert_{t=T} 
+ \int_{(0,T) \times \Omega } \Big( \rho  \overline{v} \cdot \frac{\partial \phi}{\partial t}
- 2 D\overline{v} : D\phi \Big)
+  \int_0^T \! \! \int_{\partial  {\mathcal S} (t) } ( \phi \cdot v ) (u_{{\mathcal S}} \cdot n) \, d \sigma \, dt \\
= - \int_{(0,T) \times \Omega } \rho  \overline{g} \cdot  \phi  + \int_{0}^{T} m r_{\phi}\, \mathfrak{l} \cdot \ell^{\perp}.
\end{multline}
\end{itemize}
\end{Definition}
\ \par
Let us justify this definition by proving the following result. 
\begin{Lemma} \label{SimpliesW}
If $(\mathfrak{l}, \mathfrak{r}, v)$ is a classical solution of \eqref{S1}-\eqref{vS} with vanishing initial data and with source term $(g, {g}_1, {g}_2)$  then it is a weak solution in the sense of Definition \ref{DefWeakStokes}.
\end{Lemma}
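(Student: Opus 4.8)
The plan is to verify Lemma~\ref{SimpliesW} by a direct computation: starting from a classical solution $(\mathfrak{l},\mathfrak{r},v)$ of \eqref{S1}-\eqref{vS}, I take an admissible test field $\phi$ (divergence free, compactly supported in $[0,T]\times\Omega$, with $D\phi=0$ on $\mathcal{S}(t)$) and show that \eqref{Really} holds. The natural strategy is to pair the momentum equation \eqref{S1} against $\phi$ on the fluid region, integrate by parts in both space and time, and then account separately for the solid region using \eqref{Solide1S}-\eqref{Solide2S} and the rigidity of $\phi$ on $\mathcal{S}(t)$. Because the fluid domain $\mathcal{F}(t)$ is time-dependent and moves with the prescribed velocity $u_{\mathcal{S}}$, the time integration by parts must use a Reynolds-transport/Leibniz formula, and this is precisely where the boundary term $\int_0^T\!\int_{\partial\mathcal{S}(t)}(\phi\cdot v)(u_{\mathcal{S}}\cdot n)\,d\sigma\,dt$ in \eqref{Really} will be generated.

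First I would treat the fluid contribution. Multiplying \eqref{S1} by $\phi$ and integrating over $\mathcal{F}(t)$, the term $\int_{\mathcal{F}(t)}\partial_t v\cdot\phi$ is rewritten via
\[
\frac{d}{dt}\int_{\mathcal{F}(t)} v\cdot\phi\,dx
= \int_{\mathcal{F}(t)}(\partial_t v\cdot\phi + v\cdot\partial_t\phi)\,dx
- \int_{\partial\mathcal{S}(t)}(v\cdot\phi)(u_{\mathcal{S}}\cdot n)\,d\sigma,
\]
the boundary term arising because $\partial\mathcal{S}(t)$ moves with normal speed $u_{\mathcal{S}}\cdot n$ (the sign follows from $n$ being outward from $\mathcal{F}$, hence inward to $\mathcal{S}$). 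Integrating in time from $0$ to $T$ and using the vanishing initial data kills the $t=0$ endpoint and leaves the $t=T$ endpoint $\int_{\mathcal{F}(T)}v\cdot\phi$. For the viscous term, $-\int_{\mathcal{F}(t)}\Delta v\cdot\phi = 2\int_{\mathcal{F}(t)}Dv:D\phi - 2\int_{\partial\mathcal{F}(t)}(Dv\,n)\cdot\phi\,d\sigma$ after using $\operatorname{div}\phi=0$; and $\int_{\mathcal{F}(t)}\nabla q\cdot\phi = -\int_{\partial\mathcal{S}(t)}q(\phi\cdot n)\,d\sigma$ since $\operatorname{div}\phi=0$ and $\phi$ vanishes near $\partial\Omega$. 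Combining the pressure and viscous boundary terms reconstitutes the Cauchy stress $\T(v,q)n$ integrated against $\phi$ on $\partial\mathcal{S}(t)$, which is exactly what Newton's laws \eqref{Solide1S}-\eqref{Solide2S} will cancel.

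Next I would handle the solid contribution. On $\mathcal{S}(t)$ the test field is rigid, so $\phi(t,x)=\ell_\phi(t)+r_\phi(t)(x-h(t))^\perp$ for some $\ell_\phi,r_\phi$, and likewise $\overline{v}=v_{\mathcal{S}}$ is rigid there. The terms $-\int_{\Omega}(\rho\overline{v}\cdot\phi)|_{t=T}$ and $\int\rho\overline{v}\cdot\partial_t\phi$ and $-\int\rho\overline{g}\cdot\phi$ in \eqref{Really} restricted to $\mathcal{S}(t)$ are moments of rigid fields against $\rho$; using the definitions of $m$ and $\mathcal{J}$ as the zeroth and second moments of $\rho_{\mathcal{S}_0}$ these reduce to the ODE pairing $m\mathfrak{l}'\cdot\ell_\phi + \mathcal{J}\mathfrak{r}'\,r_\phi$ plus forcing. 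Here I must be careful that $\frac{d}{dt}\int_{\mathcal{S}(t)}\rho\,\overline{v}\cdot\phi$ produces, besides the $m\mathfrak{l}'$ and $\mathcal{J}\mathfrak{r}'$ terms, the Coriolis-type correction $m\,r_\phi\,\mathfrak{l}\cdot\ell^\perp$ appearing on the right of \eqref{Really}; this comes from differentiating the rotating frame $(x-h(t))^\perp$ and from $D\phi=0$ on the solid. Substituting \eqref{Solide1S}-\eqref{Solide2S} makes the stress integral $\int_{\partial\mathcal{S}(t)}\T(v,q)n\cdot\phi$ cancel against the fluid-side boundary stress term, leaving exactly \eqref{Really}.

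\textbf{The main obstacle} I anticipate is the careful bookkeeping of the moving-boundary terms and the rotating-frame time derivatives: tracking every boundary integral on $\partial\mathcal{S}(t)$ so that the fluid stress and the solid Newton equations cancel cleanly, and correctly isolating the single surviving Coriolis term $m\,r_\phi\,\mathfrak{l}\cdot\ell^\perp$. A subtle point is that $\phi$ need not match $v_{\mathcal{S}}$ on $\partial\mathcal{S}(t)$ in its normal-transport behaviour, so the boundary term $\int(\phi\cdot v)(u_{\mathcal{S}}\cdot n)$ genuinely survives rather than cancelling — verifying that its sign and placement match \eqref{Really} is the delicate step. None of this is conceptually hard, but it is exactly the kind of signed-boundary-term reconciliation where errors hide, so I would organize it by writing the full time-derivative identity over all of $\Omega$ first and only afterwards splitting into $\mathcal{F}(t)$ and $\mathcal{S}(t)$.
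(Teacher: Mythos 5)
Your plan follows the paper's proof step for step: multiply \eqref{S1} by the rigid-on-the-solid test field $\phi$, integrate over the moving fluid domain, use a transport identity to handle $\partial_t v$, integrate the viscous and pressure terms by parts so that the Cauchy stress $\T(v,q)n$ appears on $\partial\mathcal{S}(t)$ and is cancelled via Newton's laws \eqref{Solide1S}--\eqref{Solide2S}, and finally convert the ODE pairings $m\mathfrak{l}'\cdot\ell_\phi+\mathcal{J}\mathfrak{r}'\,r_\phi$ into integrals of $\rho\,\overline{v}\cdot\partial_t\phi$ over $\mathcal{S}(t)$ using the moment identities \eqref{RelRho}, which is exactly where the Coriolis term $m\,r_\phi\,\mathfrak{l}\cdot\ell^\perp$ arises. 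No structural idea is missing.

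However, the signs in your two key displayed identities are inconsistent with each other and with the paper's convention, and as written the computation would not reproduce \eqref{Really}. The paper's convention (the one under which \eqref{Solide1S}--\eqref{Solide2S} and \eqref{Really} are stated) is that $n$ is the outward normal of $\mathcal{F}(t)$, i.e.\ it points into the solid. Under this convention the transport identity reads
\begin{equation*}
\frac{d}{dt}\int_{\mathcal{F}(t)} v\cdot\phi\,dx
= \int_{\mathcal{F}(t)}\partial_t(v\cdot\phi)\,dx
+ \int_{\partial\mathcal{S}(t)}(v\cdot\phi)\,(u_{\mathcal{S}}\cdot n)\,d\sigma ,
\end{equation*}
with a plus sign (when $u_{\mathcal{S}}\cdot n>0$ the solid retreats and $\mathcal{F}(t)$ grows), so your minus sign --- and the parenthetical justification you give for it --- are the wrong way round. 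Likewise $\int_{\mathcal{F}(t)}\nabla q\cdot\phi = +\int_{\partial\mathcal{S}(t)} q\,(\phi\cdot n)\,d\sigma$ under this convention, not minus. Those two choices of yours correspond to $n$ pointing out of the solid, whereas your viscous term $-2\int_{\partial\mathcal{F}(t)}(Dv\,n)\cdot\phi\,d\sigma$ uses $n$ pointing out of the fluid. Concretely, with your signs the combined pressure--viscous boundary contribution is $-\int_{\partial\mathcal{S}(t)}\big[(2Dv+q\,\Id)\,n\big]\cdot\phi\,d\sigma$, which is not $-\int_{\partial\mathcal{S}(t)}\big[\T(v,q)\,n\big]\cdot\phi\,d\sigma$ and therefore cannot be cancelled by \eqref{Solide1S}--\eqref{Solide2S}; and the surviving moving-boundary term would enter the final identity with the sign opposite to the one it has in \eqref{Really}. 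Both slips disappear once you take $n$ to be the outward normal of $\mathcal{F}(t)$ everywhere; with that single consistent convention your outline becomes precisely the paper's proof.
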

\begin{proof}
We introduce $\phi$ as above. In particular, one can describe $\phi$ in ${\mathcal S}(t)$ as:
\begin{equation*}
\phi (t,x) = \ell_\phi (t) +  r_\phi (t) (x- h(t))^\perp \text{  for any } x \in \mathcal{S}(t).
\end{equation*}
We multiply the equation \eqref{S1} by $\phi(t,\cdot)$ and integrate over $\mathcal{F}(t)$. This yields 
\begin{equation*}
\int_{\mathcal{F}(t)} \phi  \cdot \Big( \partial_{t}  v-  \Delta {v} + \nabla {p} \Big) 
= \int_{\mathcal{F}(t)} \phi  \cdot \overline{g}.
\end{equation*}
Now we  observe that 
\begin{equation*}
\frac{d}{dt}  \int_{\mathcal{F}(t)} \phi \cdot v 
= \int_{\mathcal{F}(t)} (\partial_t \phi ) \cdot {v}
+ \int_{\mathcal{F}(t)} \phi  \cdot (\partial_t  {v}) +  \int_{\partial \mathcal{S}(t)} ( \phi \cdot v ) (u_{{\mathcal S}} \cdot n).
 \end{equation*}
and
\begin{eqnarray*}
\int_{\mathcal{F}(t)} \Big(  -  \Delta {v} + \nabla {p} \Big)  \cdot \phi
&=& 2 \int_{\mathcal{F}(t)} D\overline{v} :  D\phi -  \int_{ \partial \mathcal{S} (t)} ( {\T (v,p)} n) \cdot  \phi \, d \sigma 
\\ &=& 2 \int_{\Omega} D\overline{v} :  D\phi  + m (\mathfrak{l}' - g_1 ) \cdot \ell_\phi  + \mathcal{J} (\mathfrak{r}' - g_2 ) r_\phi ,
\end{eqnarray*}
thanks to  \eqref{Solide1S}-\eqref{Solide2S}.

Hence we get, after integrating in time, using $\rho=1$ in the fluid and $v(0,\cdot)=0$:
\begin{multline*}
  \int_{{\mathcal F}(T)} \rho (T,\cdot) \overline{v}(T,\cdot) \cdot \phi(T,\cdot) \, dx
- \int_{0}^{T} \! \! \int_{\mathcal{F}(t)} \rho (\partial_t \phi ) \cdot {v} \, dx \, dt
- \int_{0}^{T} \! \! \int_{\partial \mathcal{S}(t)}  ( \phi \cdot v )  (u_{{\mathcal S}} \cdot n) \, d \sigma \, dt \\
+ 2 \int_{(0,T) \times \Omega} D\overline{v} :  D\phi  
+ \int_{0}^{T} \big[ m (\mathfrak{l}' - g_1 ) \cdot \ell_\phi  + \mathcal{J} (\mathfrak{r}' - g_2 ) r_\phi \big]
= \int_{0}^{T} \! \! \int_{\mathcal{F}(t)} \phi  \cdot \overline{g}. 
\end{multline*}
Now using
\begin{equation} \label{RelRho}
\int_{{\mathcal S}(t)} \rho_{{\mathcal S}}(t,x) (x-h(t)) \, dx =0, \ \ 
m = \int_{{\mathcal S}(t)} \rho_{{\mathcal S}}(t,x) \, dx, \ \ 
{\mathcal J} = \int_{{\mathcal S}(t)} \rho_{{\mathcal S}}(t,x) |x-h(t)|^{2} \, dx,
\end{equation}
we deduce
\begin{equation*}
m g_1(t) \cdot \ell_\phi(t)  + \mathcal{J} g_2(t) \, r_\phi(t) = \int_{\mathcal{S}(t)} \rho (t,x) \phi(t,x) \cdot \overline{g}(t,x) \, dx. 
\end{equation*}
On another side, we see that in ${\mathcal S}(t)$:
\begin{equation*}
\partial_{t} \phi (t,x)= \ell'_{\phi}(t) + r'_{\phi}(t)(x-h(t))^{\perp} - r_{\phi}(t) \, \ell^{\perp}(t). 
\end{equation*}
Using an integration by parts in time and \eqref{RelRho}, we deduce
\begin{eqnarray*}
\int_{0}^{T} \big[ m \mathfrak{l}' \cdot \ell_\phi  + \mathcal{J} \, \mathfrak{r}' \, r_\phi \big] \, dt
&=& - \int_{0}^{T} \big[ m \, \mathfrak{l} \cdot \ell'_\phi  + \mathcal{J} \, \mathfrak{r} \, r'_\phi \big]
+ m \, \mathfrak{l}(T) \cdot \ell_\phi(T)  + \mathcal{J} \, \mathfrak{r}(T) \, r_\phi(T) \\
&=& - \int_{0}^{T} \! \! \int_{{\mathcal S}(t)} \rho_{S} \, v \cdot (\partial_{t} \phi) \, dx \, dt
+ \int_{0}^{T} m \, r_{\phi} \, \mathfrak{l} \cdot \ell^{\perp} \\ 
&& \quad \quad + \int_{{\mathcal S}(T)} \rho(T,\cdot) \overline{v}(T,\cdot) \cdot \phi(T,\cdot) \, dx .
\end{eqnarray*}
Adding the equalities above, we easily obtain \eqref{Really}.
\end{proof}
Let us now prove that $(\mathfrak{l}, \mathfrak{r}, v)$ given by \eqref{checkthat} is a weak solution of the auxiliary system.
\begin{Lemma}
\label{fortdonnefaible}
Let $(\ell,r,u)$ a solution of \eqref{NS1}-\eqref{Solide2NS} as given by Theorem \ref{ThmLeray}, such that $\dist ({\mathcal S}(t),\partial \Omega) > 0 $ on $[0,T]$. Then $(\mathfrak{l}, \mathfrak{r}, v)$ given by \eqref{checkthat} is a weak solution in the sense of Definition
\ref{DefWeakStokes} with source terms given by \eqref{checkthatsource}.
\end{Lemma}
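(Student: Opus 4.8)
The plan is to derive the weak formulation \eqref{Really} directly from the weak formulation \eqref{weakformulation} of the original system, rather than from the strong form as in Lemma \ref{SimpliesW} (which is unavailable at Leray regularity). First I would record that the candidate $(\mathfrak{l},\mathfrak{r},v)=(t\ell,tr,tu)$ has the regularity \eqref{reguW}: since $(\ell,r)\in C^0$ and $\overline{u}\in C([0,T];L^2(\Omega))\cap L^2(0,T;H^1(\Omega))$, the field $\overline{v}=t\overline{u}$ lies in the same space, is divergence free, and is compatible with $(\mathfrak{l},\mathfrak{r})$ because $\overline{v}=t\,u_{\mathcal{S}}=\mathfrak{l}+\mathfrak{r}(x-h)^\perp=v_{\mathcal{S}}$ in $\mathcal{S}(t)$; moreover its initial value vanishes.

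The core of the argument is to test \eqref{weakformulation} not with $\phi$ but with $\psi:=t\phi$, where $\phi$ is an arbitrary admissible test function for \eqref{Really} (divergence free, $C^\infty_c$, with $D\phi=0$ on $\mathcal{S}(t)$); note $\psi$ is again admissible, and $\psi\vert_{t=0}=0$. Using $\partial_t\psi=\phi+t\,\partial_t\phi$, $D\psi=t\,D\phi$ and the identities $\overline{v}=t\overline{u}$, $D\overline{v}=t\,D\overline{u}$, the terms of \eqref{weakformulation} transform as follows: the initial term drops out; the terminal term becomes $-\int_\Omega(\rho\,\overline{v}\cdot\phi)\vert_{t=T}$; the time term produces $\int\rho\,\overline{u}\cdot\phi+\int\rho\,\overline{v}\cdot\partial_t\phi$; and the viscous term produces $-2\int D\overline{v}:D\phi$.

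Then I would treat the convection term $\int_{(0,T)\times\Omega} t\,\overline{u}\otimes\overline{u}:D\phi$. Since $D\phi=0$ on $\mathcal{S}(t)$, it is supported in the fluid and equals $\int_0^T t\int_{\mathcal{F}(t)}u\otimes u:\nabla\phi$. Integrating by parts in space at a.e.\ fixed $t$, using $\div u=0$, the no-slip condition on $\partial\Omega$ and $u\cdot n=u_{\mathcal{S}}\cdot n$ on $\partial\mathcal{S}(t)$, this becomes $-\int_0^T t\int_{\mathcal{F}(t)}(u\cdot\nabla)u\cdot\phi+\int_0^T\int_{\partial\mathcal{S}(t)}(\phi\cdot v)(u_{\mathcal{S}}\cdot n)$, recognizing $v=tu$ and that $n$ is the outward fluid normal (so the interface integral appears on the left-hand side with exactly the sign of \eqref{Really}). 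Collecting all contributions and inserting the definitions of $\overline{g}$ (namely $u-t(u\cdot\nabla)u$ in the fluid and $u_{\mathcal{S}}$ in the solid) and of $\rho$, the right-hand side reorganizes precisely into $-\int\rho\,\overline{g}\cdot\phi$, so that \eqref{Really} holds up to the term $\int_0^T m\,r_\phi\,\mathfrak{l}\cdot\ell^\perp$.

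Finally I would observe that this last term vanishes for the solution under consideration: since $\mathfrak{l}=t\ell$ is colinear with $\ell$, one has $\mathfrak{l}\cdot\ell^\perp=t(\ell\cdot\ell^\perp)=0$, whence $\int_0^T m\,r_\phi\,\mathfrak{l}\cdot\ell^\perp=0$ and \eqref{Really} holds as stated. The main obstacle is the convection step: one must justify, at the available regularity, the passage from the conservative form $u\otimes u:\nabla\phi$ to the advective form $(u\cdot\nabla)u\cdot\phi$ together with the interface integral on the moving boundary $\partial\mathcal{S}(t)$. This is where Lemma \ref{penh} is essential, since it guarantees $(u\cdot\nabla)u\in L^{4/3}(\mathcal{F}_T)$ and hence that every integral above is well defined; the spatial integration by parts itself is legitimate because $\phi$ is smooth and $u(t,\cdot)\in H^1(\mathcal{F}(t))$ admits well-defined traces on $\partial\mathcal{S}(t)$ for a.e.\ $t$.
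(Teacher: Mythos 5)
Your proposal is correct and follows essentially the same route as the paper's own proof: testing the weak formulation \eqref{weakformulation} with $t\phi$, integrating the convection term by parts to produce the interface integral and identify $\overline{g}$, and observing that the term $\int_{0}^{T} m r_{\phi}\, \mathfrak{l} \cdot \ell^{\perp}$ vanishes since $\mathfrak{l}=t\ell$ is colinear with $\ell$. Your additional remarks on justifying the integration by parts at Leray regularity via Lemma \ref{penh} and the trace of $u(t,\cdot) \in H^{1}({\mathcal F}(t))$ only make explicit what the paper leaves implicit.
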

\begin{proof}
First we easily verify that $(\mathfrak{l}, \mathfrak{r}, v)$ satisfies \eqref{reguW} and that $\overline{v}$ defined by  \eqref{globally2} belongs to $L^{2}(0,T;H^{1}(\Omega))$ and is divergence free. Therefore it only remains to verify \eqref{Really}. \par
We consider a divergence free vector field $\phi \in C^\infty_{c} ([0,T] \times \Omega ;\R^2 )$ such that $D\phi (t,x)= 0 $ when $t \in [0,T]$ and $x \in \mathcal{S}(t)$.  We first apply Definition \ref{DefSolusLeray} to the test function $t \phi $ (instead of $\phi$). This yields:
\begin{multline}  \label{burg}
- \int_{\Omega}  (\rho \, \overline{v} \cdot \phi )\vert_{t=T}
+ \int_{(0,T) \times \Omega} \rho \, \overline{u} \cdot \phi
+ \int_{(0,T) \times \Omega} \rho \, \overline{v} \cdot \frac{\partial \phi}{\partial t} \\
+ \int_0^T t \int_{\mathcal{F}(t)} \overline{u} \otimes \overline{u} : D\phi 
- 2 \int_{(0,T) \times \Omega} D\overline{v} : D\phi = 0.
\end{multline}
Then we use an integration by parts to get 
\begin{equation*} 
\int_{\mathcal{F}(t)} \overline{u} \otimes \overline{u} :  D\phi 
= - \int_{\mathcal{F}(t)} \big( (u \cdot \nabla) u \big) \cdot \phi 
+  \int_{\partial \mathcal{S}(t)} ( \phi \cdot \overline{u})  (u_{{\mathcal S}} \cdot n) \, d \sigma.
\end{equation*}
Hence the sum of the second and of the fourth term of \eqref{burg} can be recast as follows:
\begin{equation*} 
\int_{(0,T) \times \Omega } \rho \, \overline{u} \cdot  \phi + \int_0^T t \int_{\mathcal{F}(t)} \overline{u}\otimes \overline{u} : D\phi 
= \int_{(0,T) \times \Omega } \rho \, \overline{g} \cdot  \phi 
+  \int_0^T \! \! \int_{\partial {\mathcal S} (t)} (\phi \cdot v) (u_{{\mathcal S}} \cdot n) \, d \sigma \, dt .
\end{equation*}
Then it only suffices to observe that the last term of \eqref{Really} vanishes when $(\mathfrak{l}, \mathfrak{r}, v)$ is given by \eqref{checkthat} to conclude the proof.
\end{proof}
\subsubsection{Proof of Lemma \ref{maximal}}
\label{MaxiM} 
We now turn to the proof of Lemma \ref{maximal} which is an adaptation of \cite[Theorem 2.4]{GGH}.
Therefore we only highlight the differences with the claim of \cite[Theorem 2.4]{GGH}.
Actually \cite[Theorem 2.4]{GGH} is given for the three-dimensional case but also holds true for the two-dimensional case with the same proof. Also, another difference is that \cite[Theorem 2.4]{GGH} deals with the ``Navier-Stokes + Solid'' system. 
However their proof works as well for the system \eqref{S1}-\eqref{Solide2S}. 
It is also interesting to mention that the proof of \cite[Theorem 2.4]{GGH} uses the same kind of change of variables that we introduce in Proposition \ref{ProDiffeos}. 
Thanks to this change of variable  we are led to consider the following system: 
\begin{gather}
\label{S1ch}
\frac{\partial \tilde{v}}{\partial t} -  \Delta \tilde{v} + \nabla \tilde{q} =  \tilde{g}  \ \text{ for } \ x \in \mathcal{F}_0 , \\
\label{S2ch}
\div \tilde{v} = 0 \ \text{ for } \  x \in \mathcal{F}_0 ,  \\
\label{S3ch}
\tilde{v} =  \tilde{v}_\mathcal{S} \ \text{ for } \ x\in \partial \mathcal{S}_0 ,  \\
\label{S4ch}
\tilde{v} =  0 \ \text{ for } \ x\in \partial \Omega,  \\
\label{Solide1Sch}
m  \tilde{\mathfrak{l}}' (t) = - \int_{ \partial \mathcal{S}_0} \T(\tilde{v} ,\tilde{q}) n \, d \sigma  + \tilde{g}_1 ,  \\
\label{Solide2Sch}
\mathcal{J}   \tilde{\mathfrak{r}}' (t) =   - \int_{ \partial   \mathcal{S}_0} \T(\tilde{v} ,\tilde{q}) n \cdot (x-  h (t) )^\perp \, d \sigma + \tilde{g}_2 ,
\end{gather}
where $\tilde{g} \in L^{\frac{4}{3}} ((0,T) \times {\mathcal F}_0; \R^{2})$ and $(\tilde{g}_1, \tilde{g}_2) \in L^{\frac{4}{3}} ( 0,T ; \R^{2} \times \R)$, 
\begin{equation*}
\tilde{v}_\mathcal{S} := \tilde{\mathfrak{l}} +  \tilde{\mathfrak{r}} (x - h(t))^\perp ,
\end{equation*}
and with vanishing initial data. \par
This system appears when we use the change of variable 
\begin{equation*}
\tilde{v}(t,x) := [d \varphi_{t} (x)]^{-1} \cdot v(t,\varphi_{t}(x)) , \ x \in {\mathcal F}(t) \ \text{ and } \ \tilde{\mathfrak{l}} := [d \varphi_{t} (x)]^{-1} \cdot \mathfrak{l}, \ \tilde{\mathfrak{r}}=\mathfrak{r}, 
\end{equation*}
where $\varphi_{t} = \Psi[\tau^{\ell,r}(t)]$, $\Psi$ being defined in Proposition \ref{ProDiffeos}, and when we put the error terms resulting from the change of variable in the right hand side as in \eqref{EulerDiff}.
Then one can look for a solution of the original system by a fixed point scheme. \par
For the system \eqref{S1ch}-\eqref{Solide2Sch} the maximal regularity result \cite[Theorem 4.1]{GGH} can be straightforwardly adapted into:
\begin{Lemma} \label{maximalstokes}
Let $q \in (1,+ \infty)$ and $T>0$. 
For all $\tilde{g}$ in $ L^{q} ((0,T) \times {\mathcal F}_0; \R^{2})$, for all $(\tilde{g}_1, \tilde{g}_2)$ in $L^{q}((0,T); \R^{3}) $, 
there exists a unique solution of \eqref{S1ch}-\eqref{Solide2Sch} on $[0,T]$ with vanishing initial data satisfying
\begin{equation} \label{regtildev}
\tilde{v} \in L^{q} (0,T ; W^{2,q} ( {\mathcal F}_0 ) ), \quad 
(\partial_{t} \tilde{v}, \nabla \tilde{q})  \in L^{q} ( (0,T) \times {\mathcal F}_0 ; \R^{4}), \quad
(\tilde{\mathfrak{l}},  \tilde{\mathfrak{r}}) \in   W^{1,q} ( (0,T) ; \R^{3})   .
\end{equation}
\end{Lemma}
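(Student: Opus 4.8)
The plan is to recast the system \eqref{S1ch}-\eqref{Solide2Sch} as an abstract evolution equation on a monolithic function space and to invoke the maximal $L^q$-regularity of the associated fluid-solid Stokes operator, which is precisely the content of \cite[Theorem 4.1]{GGH}. Since that reference treats the three-dimensional ``Navier-Stokes + solid'' problem with a fixed configuration, I would isolate only the points where the present setting differs and check that these differences are harmless.

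First I would introduce the space $L^q_\sigma$ of vector fields on $\Omega$ that are divergence free and coincide on ${\mathcal S}_0$ with a rigid velocity field, weighted by $\rho_0$, together with the associated projection $\mathbb{P}$ (the analogue of the Leray projection respecting the rigid constraint). Extending $\tilde v$ to a global field as in \eqref{globally2}, the unknown $U$ encodes both the fluid velocity and the rigid data $(\tilde{\mathfrak l},\tilde{\mathfrak r})$, the latter being read off from the restriction of $U$ to ${\mathcal S}_0$. One then defines the fluid-solid Stokes operator $A$, with domain the fields in $W^{2,q}({\mathcal F}_0)$ that are divergence free, rigid on ${\mathcal S}_0$ and vanish on $\partial\Omega$, obtained by applying $\mathbb{P}$ to $-\Delta$ together with the boundary-stress coupling coming from \eqref{Solide1Sch}-\eqref{Solide2Sch}. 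The system reads $\partial_t U + A U = \mathbb{P}\tilde F$ with $U(0)=0$, where $\tilde F$ is the field on $\Omega$ assembled from $(\tilde g, \tilde g_1, \tilde g_2)$, and \cite[Theorem 4.1]{GGH} asserts exactly that $A$ enjoys maximal $L^q$-regularity on $(0,T)$, yielding $\partial_t U, AU \in L^q$.

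Two differences with \cite{GGH} must be addressed: the dimension and a genuinely time-dependent coefficient. The passage from three to two dimensions is immaterial, since the operator-theoretic ingredients of \cite{GGH} — the $\mathcal{R}$-sectoriality of the Stokes operator on the given geometry and the boundedness of the rigid projection $\mathbb{P}$ — hold verbatim in dimension two. The true modification is that, through $\tilde v_{\mathcal S} = \tilde{\mathfrak l} + \tilde{\mathfrak r}(x-h(t))^\perp$ and the torque term $(x-h(t))^\perp$ in \eqref{Solide2Sch}, the coupling depends on time via $h(t)$, whereas \cite{GGH} works with a fixed configuration. Since ${\mathcal S}_0$ is fixed and $h \in C^1([0,T];\R^2)$ is bounded, this time dependence enters only through lower-order, continuous-in-time coefficients. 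I would therefore write $A(t) = A_0 + B(t)$, with $A_0$ the constant operator of \cite{GGH} (taking $h\equiv h(0)$) and $B(t)$ a relatively bounded perturbation with uniformly bounded coefficients; maximal regularity is stable under such perturbations on a finite interval, either by a Neumann-series argument after a contraction on small subintervals, or directly by the perturbation theorem for non-autonomous problems with time-continuous, relatively bounded perturbations.

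Finally I would recover the pressure and the solid regularity. The gradient $\nabla\tilde q$ is reconstructed from $(I-\mathbb{P})(\Delta\tilde v + \tilde g)$ via the De Rham theorem on the fixed domain ${\mathcal F}_0$, giving $\nabla\tilde q \in L^q((0,T) \times {\mathcal F}_0)$; the regularity $\tilde v \in L^q(0,T;W^{2,q}({\mathcal F}_0))$ is the elliptic counterpart of $AU \in L^q$, and $(\tilde{\mathfrak l},\tilde{\mathfrak r}) \in W^{1,q}$ follows from \eqref{Solide1Sch}-\eqref{Solide2Sch} once the boundary stress $\T(\tilde v,\tilde q)\,n$ is controlled in $L^q$ by trace estimates. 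Uniqueness is immediate from the uniqueness of the abstract Cauchy problem. The main obstacle is not any single estimate but the bookkeeping required to confirm that the monolithic framework of \cite{GGH} transfers intact and that the $h(t)$-dependence is a genuinely lower-order, maximal-regularity-preserving perturbation; everything else is the verbatim citation of \cite[Theorem 4.1]{GGH}.
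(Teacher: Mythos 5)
Your proposal is correct and, like the paper, it ultimately reduces Lemma \ref{maximalstokes} to \cite[Theorem 4.1]{GGH}; but the interface you build to that citation is genuinely different from the paper's. The paper invokes GGH's own scheme, which is a \emph{decoupled} one: maximal $L^{q}$-regularity for the Stokes problem in the fixed domain ${\mathcal F}_{0}$ with inhomogeneous (rigid) Dirichlet boundary data, combined with the added-mass effect, so that the hydrodynamic force and torque in \eqref{Solide1Sch}--\eqref{Solide2Sch} are expressed through the added-mass operator and the solid equations become a solvable finite-dimensional system (positive definiteness of the mass-plus-added-mass matrix); this needs only classical Stokes theory (cf.\ \cite{Galdi}) plus finite-dimensional algebra, which is why the paper can describe the adaptation as straightforward. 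You instead repackage the problem as an abstract Cauchy problem $\partial_{t}U+AU=\mathbb{P}\tilde{F}$ for a monolithic fluid--structure operator. This is viable, and it buys you uniqueness and a systematic treatment of the time dependence for free; but note that \cite[Theorem 4.1]{GGH} is a system-level statement proved by the decoupling just described --- it does not literally assert the boundedness of your rigid projection $\mathbb{P}$ on $L^{q}$ for $q\neq 2$, nor the maximal regularity ($\mathcal{R}$-sectoriality) of the coupled operator $A$; those facts are true but belong to a different (semigroup-theoretic) strand of the literature, so your ``verbatim citation'' really amounts either to re-deriving the operator formulation from GGH's system estimate or to citing those other works. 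Concerning the $h(t)$-dependence, your perturbation argument is sound, and the fact that makes it work deserves to be stated explicitly: the family of rigid velocity fields does not depend on the chosen center, so all frozen-in-time operators share a common domain and are conjugate to the $h\equiv 0$ operator via the bounded recentering $(\tilde{\mathfrak{l}},\tilde{\mathfrak{r}})\mapsto(\tilde{\mathfrak{l}}-\tilde{\mathfrak{r}}\,h(t_{0})^{\perp},\tilde{\mathfrak{r}})$, which legitimizes both your small-interval Neumann series and the continuous-coefficient non-autonomous maximal regularity theorem. One can even bypass this step entirely: the exact image of the auxiliary system \eqref{S1}--\eqref{vS} under the change of variables of Proposition \ref{ProDiffeos} (recall $h_{0}=0$) has torque arm $x$ rather than $x-h(t)$, i.e.\ the naturally transformed system is autonomous, which is the real reason the paper's adaptation is ``straightforward''.
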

The proof of this lemma combines maximal regularity of the Stokes problem with inhomogeneous Dirichlet boundary conditions and some added mass effects, cf. \cite[Section 4]{GGH} and the book of Galdi \cite{Galdi}. Then Lemma \ref{maximal} can be deduced from Lemmas \ref{penh} and \ref{maximalstokes} (with $q=\frac43$) using the same fixed point procedure as in \cite[Sections 5--7]{GGH}. One gets a solution for small time $T$, and a solution defined on a larger time interval by gluing together such pieces of solutions. \par
\subsubsection{Uniqueness for the auxiliary system}
\label{WS} 
Our next step toward the proof of Proposition \ref{PropAPrioriEstimatesNS} is the following uniqueness result for weak solutions of the auxilliary system.
\begin{Lemma} \label{UniciteFaible}
Let  $g \in L^{\frac{4}{3}}({\mathcal F}_{T})$, $({g}_{1}, g_{2}) \in  L^{\frac{4}{3}} ( (0,T); \R^{3} )$,  $(\mathfrak{l}_1 ,\mathfrak{r}_1 ,v_1 ) $ and $(\mathfrak{l}_2,\mathfrak{r}_2,v_2)$ two weak solutions in the sense of Definition~\ref{DefWeakStokes} of \eqref{S1}-\eqref{vS} with vanishing initial data and with source terms $g, g_{1}, g_{2}$.
Then  $(\mathfrak{l}_1 ,\mathfrak{r}_1 ,v_1 ) = (\mathfrak{l}_2,\mathfrak{r}_2,v_2)$.
\end{Lemma}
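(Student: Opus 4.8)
The plan is to exploit the fact that the auxiliary system \eqref{S1}-\eqref{vS} is \emph{linear} in the unknown $(\mathfrak{l},\mathfrak{r},v)$. Setting $w:=v_1-v_2$, $\mathfrak{l}:=\mathfrak{l}_1-\mathfrak{l}_2$, $\mathfrak{r}:=\mathfrak{r}_1-\mathfrak{r}_2$ and letting $\overline{w}$ be the associated global field defined as in \eqref{globally2}, the source contributions cancel when subtracting the two copies of \eqref{Really}. Hence $(\mathfrak{l},\mathfrak{r},\overline{w})$ is a weak solution, with vanishing initial data, of the homogeneous system: for every admissible test field $\phi$ one has, writing $E(t):=\int_\Omega \rho(t,\cdot)\,|\overline{w}(t,\cdot)|^2\,dx$, a relation whose only surviving right-hand side is the coupling term $\int_0^T m\,r_\phi\,\mathfrak{l}\cdot\ell^\perp$. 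It therefore suffices to prove that this homogeneous weak solution vanishes identically, and the natural tool is an energy estimate obtained by testing the weak formulation against the solution itself.

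The difficulty is that \eqref{Really} only admits smooth test functions $\phi$ which are divergence free and satisfy the rigidity constraint $D\phi(t,\cdot)=0$ on the \emph{moving} solid $\mathcal{S}(t)$, so $\overline{w}$ cannot be inserted directly, and a naive time-regularization of $\overline{w}$ does not preserve this constraint on a moving domain. To circumvent this I would first transport the problem to the fixed configuration through the volume-preserving diffeomorphism $\varphi_t:=\Psi[\tau^{\ell,r}(t)]$ of Proposition \ref{ProDiffeos}, exactly as in the change of variables leading to \eqref{S1ch}-\eqref{Solide2Sch}. In the pulled-back variables the incompressibility and rigidity constraints become independent of time, so that the relevant solenoidal--rigid Hilbert spaces are fixed; time-mollified fields then \emph{are} admissible test functions, and one recovers the right functional framework $\overline{w}\in L^\infty(0,T;L^2)\cap L^2(0,T;H^1)$ with $\partial_t\overline{w}$ in the dual of the constrained space. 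Since $\Psi$ is only defined near the identity, I would localize in time, covering $[0,T]$ by short subintervals on which the solid stays in the neighborhood $U$ relative to a fixed reference configuration (Corollary \ref{CorEstPsi}), prove the vanishing on each, and chain the conclusions by a connectedness argument as in the Euler case. The hypothesis $\dist(\mathcal{S}(t),\partial\Omega)>0$ guarantees that all configurations remain admissible and the geometric constants uniform.

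With self-testing legitimized, the Lions--Magenes lemma yields the energy identity, in which the viscous contribution $2\int |D\overline{w}|^2$ is purely dissipative and, by Korn's inequality, coercive on the fluid domain; the boundary transport term $\int_{\partial\mathcal{S}(t)}(\phi\cdot v)(u_{\mathcal S}\cdot n)$ is precisely the term that the $\rho$-weighted formulation is built to absorb, and the coupling term becomes $\int_0^T m\,\mathfrak{r}\,\mathfrak{l}\cdot\ell^\perp$, which is controlled by $C(|\mathfrak{l}|^2+|\mathfrak{r}|^2)\le C\,E(t)$ since $\ell\in C^0([0,T])$ is bounded and $|\mathfrak{l}|^2+|\mathfrak{r}|^2$ is dominated by the solid part of $E(t)$ via \eqref{RelRho}. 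One is thus left with a differential inequality of the form
\[
\tfrac{1}{2}E(T)+2\int_0^T\!\!\int_{\mathcal F(t)}|D\overline{w}|^2 \;\le\; C\int_0^T E(t)\,dt ,
\]
and, as $E(0)=0$, Gronwall's lemma forces $E\equiv 0$; hence $\overline{w}=0$ and $\mathfrak{l}=\mathfrak{r}=0$, which is the assertion. The main obstacle is precisely the first, functional-analytic step, namely justifying the use of the weak solution as its own test function in spite of the moving rigidity constraint; everything hinges on reducing to the fixed configuration so that the test space is time-independent. An alternative would be a duality argument, using as test function the strong solution of the backward adjoint problem supplied by Lemma \ref{maximal}, but this meets the same admissibility issue and requires an equally careful approximation.
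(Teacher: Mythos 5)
Your proposal is correct and follows essentially the same route as the paper: the paper likewise subtracts the two weak formulations (the source terms cancel by linearity), takes $\phi=\hat v$ in the resulting identity \eqref{DiffWS} ``after a standard regularization procedure'' (the step you rightly flag as the delicate one and justify via the fixed-domain pullback of Proposition \ref{ProDiffeos}), obtains the energy identity \eqref{EstEWS}, and concludes by Gronwall. The one imprecision is that the boundary transport term $\int_0^T\!\int_{\partial\mathcal S(t)}|\hat v|^2(u_{\mathcal S}\cdot n)\,d\sigma\,dt$ is \emph{not} absorbed by the $\rho$-weighted formulation — it survives in \eqref{EstEWS} — but it is estimated exactly as you estimate the coupling term, by $C\int_0^T\big(|\hat{\mathfrak{l}}|^2+|\hat{\mathfrak{r}}|^2\big)\,dt$, using that $\hat v$ is a rigid velocity on $\partial\mathcal S(t)$ and $(\ell,r)$ is bounded, so your final differential inequality and conclusion stand unchanged.
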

\begin{proof}[Proof of Lemma \ref{UniciteFaible}]
We introduce $\overline{v}_{1}$ and $\overline{v}_{2}$ by \eqref{globally2}.
We define
\begin{equation*}
\hat{v}(t,x) := \overline{v}_{1}(t,x) - \overline{v}_{2}(t,x) \ \text{ in } \ \Omega, \
\hat{\mathfrak{l}} := \mathfrak{l}_{1} - \mathfrak{l}_{2} \ \text{ and } \ 
\hat{\mathfrak{r}} := \mathfrak{r}_{1} - \mathfrak{r}_{2},
\end{equation*}
so that
\begin{equation*}
\hat{v}= \hat{\mathfrak{l}} + \hat{\mathfrak{r}} (x-h)^{\perp} \text{ in } {\mathcal S}(t).
\end{equation*}
We introduce a test function $\phi$ as in Definition \ref{DefWeakStokes}, apply \eqref{Really} to $(\mathfrak{l}_{1},\mathfrak{r}_{1},v_{1})$ and $(\mathfrak{l}_{2},\mathfrak{r}_{2},v_{2})$ and make the difference of the two. We obtain
\begin{equation} \label{DiffWS}
- \int_{\Omega }  (\rho {\hat{v}} \cdot  \phi )\vert_{t=T} 
+ \int_{(0,T) \times \Omega } \Big( \rho  \hat{v} \cdot \frac{\partial \phi}{\partial t} 
- 2 D\hat{v} : D\phi \Big)
+  \int_0^T \! \! \int_{\partial  {\mathcal S} (t) } ( \phi \cdot \hat{v} ) (u_{{\mathcal S}} \cdot n) \, d \sigma \, dt
=    \int_{0}^{T} m r_{\phi}\, \hat{\mathfrak{l}} \cdot \ell^{\perp}.
\end{equation}
Now after a standard regularization procedure, we can take $\phi=\hat{v}$ in \eqref{DiffWS}. We infer
\begin{equation} \label{EstEWS}
- \frac{1}{2} \int_{\Omega }  \rho |{\hat{v}}(T,\cdot)|^{2} 
- 2 \int_{(0,T) \times \Omega } |D\hat{v}|^{2} 
+  \int_0^T \! \! \int_{\partial  {\mathcal S} (t) } |\hat{v}|^{2} (u_{{\mathcal S}} \cdot n) \, d \sigma \, dt
=  \int_{0}^{T} m \hat{\mathfrak{r}}\, \hat{\mathfrak{l}} \cdot \ell^{\perp}.
\end{equation}
Using the boundary conditions on $\hat{v}$ and the boundedness of $(\ell,r)$, one easily sees that
\begin{equation*}
\left| \int_0^T \! \! \int_{\partial {\mathcal S}(t)} |\hat{v}|^{2} (u_{{\mathcal S}} \cdot n) \, d \sigma \, dt \right|
\leq C \int_0^T \big( |\hat{\mathfrak{l}}(t)|^{2} + | \hat{\mathfrak{r}} (t) |^{2} \big) \, dt .
\end{equation*}
Also,
\begin{equation*}
\left| \int_{0}^{T} m \hat{\mathfrak{r}}\, \hat{\mathfrak{l}} \cdot \ell^{\perp} \right|
\leq  C \int_0^T \big( | \hat{\mathfrak{l}} (t) |^{2} + |  \hat{\mathfrak{r}} (t) |^{2} \big) \, dt .
\end{equation*}
Hence
\begin{equation*}
m |\hat{\mathfrak{l}}(T)|^{2} + {\mathcal J} |\hat{\mathfrak{r}}(T)|^{2}  + \| \hat{v}(T) \|^{2}_{L^{2}({\mathcal F}(T))} 
\leq C \int_{0}^{T} |(\hat{\mathfrak{l}},\hat{\mathfrak{r}})(t) |^{2} \, dt.
\end{equation*}
So Gronwall's lemma finishes the proof.
\end{proof}
\subsubsection{End of the proof of Proposition \ref{PropAPrioriEstimatesNS}}
Let us now complete the proof of Proposition \ref{PropAPrioriEstimatesNS}. \par
According to Lemma \ref{fortdonnefaible},
$(\mathfrak{l}, \mathfrak{r}, v)$ given by \eqref{checkthat} is a weak solution of the auxiliary system in the sense of Definition
\ref{DefWeakStokes} with source terms given by \eqref{checkthatsource}.
On the other hand  Lemma \ref{UniciteFaible} provides a strong solution of the same system.
According to Lemma \ref{SimpliesW} this strong solution is also a weak solution.
Let us stress in particular that the regularity in \eqref{regtildev} with $q=\frac43$ implies the  regularity in \eqref{reguW}.
According to Lemma \ref{UniciteFaible}, these two weak solutions are equal. Therefore $(\mathfrak{l}, \mathfrak{r}, v)$ given by \eqref{checkthat} satisfy \eqref{regtildev} with $q=\frac43$, which implies Proposition \ref{PropAPrioriEstimatesNS}.
\subsection{Uniqueness: proof of Theorem \ref{UniqLeray}}
\label{section:UniqLeray}
We now turn to the core of the proof of Theorem \ref{UniqLeray}. \par
We consider $(\ell_{1},r_{1},u_{1})$ and $(\ell_{2},r_{2},u_{2})$ two solutions in the sense of Theorem \ref{ThmLeray} in $[0,T]$. By the usual connectedness argument, we can suppose $T$ arbitrarily small. In particular we consider $T>0$ small enough so that no collision occurs in the time interval $[0,T]$ for both solutions. \par
Then we perform the same change of variable than in Section \ref{UniqYudo}, that is, we define $\tilde{u}_{2}$ by \eqref{DefU2tilde}, and $\tilde{p}_{2}$, $\tilde{\ell}_{2}$ by \eqref{DefPL2tilde}. 
Then, dropping temporarily  the index $2$ in $u_{2}$, $\tilde{u}_{2}$, $p_{2}$, $\tilde{p}_{2}$ and the index $t$ in $\varphi_{t}$ and $\psi_{t}$, using the notations \eqref{conventions1} and  Einstein's repeated indices convention, we obtain:
\begin{multline} \nonumber 
\partial^2_{jj} v^{i}=
\partial_{j} \psi^{m} (\partial^{2}_{mk} \varphi^{i} ) \, \partial_{l} \tilde{v}^{k} \,  \partial_{j} \psi^{l}
 +  \partial_{k} \varphi^{i} \, \partial_{j} \psi^{m}  \, \partial^{2}_{ml} \tilde{v}^{k} \,  \partial_{j} \psi^{l}
 +  \partial_{k} \varphi^{i} \, \partial_{l} \tilde{v}^{k} (\partial^2_{jj} \psi^{l} ) \\
 + \partial_{j} \psi^{m} (\partial^{3}_{mlk} \varphi^{i}) \, \partial_{j} \psi^{l} \,  \tilde{v}^{k}  
 + (\partial^{2}_{lk} \varphi^{i}) \, \partial^2_{jj} \psi^{l} \,  \tilde{v}^{k}
 + (\partial^{2}_{lk} \varphi^{i}) \, \partial_{j} \psi^{l} \,  \partial_{j} \psi^{m}  \,   \partial_{m} \tilde{v}^{k} .
\end{multline}
Hence we obtain the following equation for $\tilde{u}_{2}$:
\begin{align*}
0= \partial_{t} \tilde{u}^{i} &+ \tilde{u}^{j} \, \partial_{j} \tilde{u}^{i} + \partial_{i} \tilde{p} -  \Delta \tilde{u}^{i} 
 \\
&+ (\partial_{k} \varphi^{i} - \delta_{ik}) \partial_{t} \tilde{u}^{k} 
+ \partial_{k} \varphi^{i} \, \partial_{l} \tilde{u}^{k} \, (\partial_{t} \psi^{l}) + ( \partial_{k} \partial_{t} \varphi^{i}) \tilde{u}^{k} 
+ (\partial^{2}_{kl} \varphi^{i}) \, (\partial_{t} \psi^{l})\, \tilde{u}^{k} \\
& + \tilde{u}^{l}  \, \partial_{l} \tilde{u}^{k} (\partial_{k} \varphi^{i}- \delta_{ik} )
+  (\partial^{2}_{lk} \varphi^{i}) \, \tilde{u}^{l} \,  \tilde{u}^{k} 
 + \partial_{k} \tilde{p} \, (\partial_{i} \psi^{k} - \delta_{ik}) \\
& -\partial_{j} \psi^{m} (\partial^{2}_{mk} \varphi^{i} ) \, \partial_{l} \tilde{u}^{k} \,  \partial_{j} \psi^{l}
 -  (\partial_{k} \varphi^{i} \partial_{j} \psi^{m} \partial_{j} \psi^{l} - \delta_{ik} \delta_{jm}\delta_{jl} ) \partial^{2}_{ml} \tilde{u}^{k} \, 
 -  \partial_{k} \varphi^{i} \, \partial_{l} \tilde{u}^{k} (\partial^2_{jj} \psi^{l} ) \\
& \quad - \partial_{j} \psi^{m} (\partial^{3}_{mlk} \varphi^{i}) \, \partial_{j} \psi^{l} \,  \tilde{u}^{k}  
 - (\partial^{2}_{lk} \varphi^{i}) \, \partial^2_{jj} \psi^{l} \,  \tilde{u}^{k}
 - (\partial^{2}_{lk} \varphi^{i}) \, \partial_{j} \psi^{l} \,  \partial_{j} \psi^{m}  \,   \partial_{m} \tilde{u}^{k} \Big] .
\end{align*}
Once again, all the factors between parentheses in the above equation are small (in $C^{1}$ norm) whenever $\| \varphi_{t} - \Id \|_{C^{3}(\overline{\Omega})} + \| \partial_{t} \varphi_{t}\|_{C^{1}(\overline{\Omega})}$ is small. \par
Now, with the same notations \eqref{DefHat1}-\eqref{DefHat2} as in Section \ref{UniqYudo}, we obtain the following equation:
\begin{equation} \label{NSDiff}
\partial_{t} \hat{u} + (u_{1} \cdot \nabla) \hat{u} + (\hat{u} \cdot \nabla) \tilde{u}_{2} + \nabla \hat{p} -  \Delta \hat{u} = \tilde{f} \ \text{ in } \ {\mathcal F}_{1}(t),
\end{equation}
with the $i$-th component of $\tilde{f}$ given by
\begin{eqnarray*}
\nonumber \tilde{f}^{i} &=&  (\partial_{k} \varphi^{i} - \delta_{ik}) \partial_{t} \tilde{u}_{2}^{k} 
+ \partial_{k} \varphi^{i} \, \partial_{l} \tilde{u}_{2}^{k} \, (\partial_{t} \psi^{l})
+ ( \partial_{k} \partial_{t} \varphi^{i}) \tilde{u}_{2}^{k} 
+ (\partial^{2}_{kl} \varphi^{i}) \, (\partial_{t} \psi^{l})\, \tilde{u}_{2}^{k}  \\
&& + \tilde{u}_{2}^{l}  \, \partial_{l} \tilde{u}_{2}^{k} (\partial_{k} \varphi^{i}- \delta_{ik} )
+ (\partial^{2}_{lk} \varphi^{i}) \, \tilde{u}_{2}^{l} \,  \tilde{u}_{2}^{k} 
+ \partial_{k} \tilde{p}_{2} \, (\partial_{i} \psi^{k} - \delta_{ik}) \\
&& - \partial_{j} \psi^{m} (\partial^{2}_{mk} \varphi^{i} ) \, \partial_{l} \tilde{u}_{2}^{k} \,  \partial_{j} \psi^{l}
-  (\partial_{k} \varphi^{i} \partial_{j} \psi^{m} \partial_{j} \psi^{l} - \delta_{ik} \delta_{jm}\delta_{jl} ) \partial^{2}_{ml} \tilde{u}_{2}^{k} \, 
-  \partial_{k} \varphi^{i} \, \partial_{l} \tilde{u}_{2}^{k} (\partial^2_{j} \psi^{l} ) \\
&& - \partial_{j} \psi^{m} (\partial^{3}_{mlk} \varphi^{i}) \, \partial_{j} \psi^{l} \,  \tilde{u}_{2}^{k}  
 - (\partial^{2}_{lk} \varphi^{i}) \, \partial^2_{jj	} \psi^{l} \,  \tilde{u}_{2}^{k}
 - (\partial^{2}_{lk} \varphi^{i}) \, \partial_{j} \psi^{l} \,  \partial_{j} \psi^{m}  \,   \partial_{m} \tilde{u}_{2}^{k} .
\end{eqnarray*}
On the other hand, the boundary conditions \eqref{NS3}-\eqref{NS4} become 
\begin{gather*}
\tilde{u}_{2} =  \tilde{\ell}_{2} (t) + {r}_{2} (t) (x-h_{1} (t))^{\perp}  \ \text{ for } \ x\in \partial \mathcal{S}_1  (t),  \\
\tilde{u}_{2} =  0 \ \text{ for } \ x\in \partial \Omega.
\end{gather*}
The solid equations \eqref{Solide1NS}-\eqref{Solide2NS} for the second solid are now recast as (writing again $n_{1}$ for the normal on $\partial {\mathcal S}_{1}$):
\begin{gather*}
m   \tilde{\ell}_{2} ' = - \int_{ \partial \mathcal{S}_{1} (t)}  {\T} ( \tilde{u}_{2} , \tilde{p}_{2} ) n_{1} \, d \sigma + m \hat{r} \tilde{\ell}_{2}^{\perp}
,  \\
\mathcal{J}  {r}_{2} ' (t) =  -  \int_{ \partial   \mathcal{S}_{1} (t)}   {\T} ( \tilde{u}_{2} , \tilde{p}_{2} ) n_{1} \cdot (x-  h_{1} (t) )^\perp \, d \sigma .
\end{gather*}
Observe that the quantities above make sense for almost every $t>0$ thanks to Proposition \ref{PropAPrioriEstimatesNS}. \par
Now we define $\hat{\ell}$, $\hat{r}$ and $\hat{u}$, $\hat{p}$, $\hat{h}$, $\hat{\theta}$ as in \eqref{DefHat1}-\eqref{DefHat2}.
Taking the difference of the equations of $\tilde{\ell}_{2}$ and $r_{2}$ with the equations for the first solid we obtain:
\begin{gather}
\label{Con1hat}
\hat{u} =  \hat{\ell} (t) + \hat{r} (t) (x-h_{1} (t))^{\perp}  \ \text{ for } \ x\in \partial \mathcal{S}_1  (t),  \\
\label{Con2hat}
\hat{u} =  0 \ \text{ for } \ x\in \partial \Omega,  \\
\label{Solide1NShat}
m   \hat{\ell} ' = - \int_{ \partial \mathcal{S}_{1} (t)} {\T} (\hat{u} ,\hat{p})  n_{1} \, d \sigma + m \hat{r} \tilde{\ell}_{2}^{\perp}
,  \\
\label{Solide2NShat}
\mathcal{J}  \hat{r} ' (t) =  -  \int_{ \partial   \mathcal{S}_{1} (t)} {\T} (\hat{u} ,\hat{p})   n_{1} \cdot (x-  h_{1} (t) )^\perp \, d \sigma .
\end{gather}
\ \par
Now we proceed by an energy estimate. Multiplying \eqref{NSDiff} by $\hat{u}$ and integrating over ${\mathcal F}_{1}(t)$, we deduce that for almost every positive $t$ (using the regularity provided by Proposition \ref{PropAPrioriEstimatesNS}):
\begin{multline} \label{NSEnergieBase}
\int_{{\mathcal F}_{1}(t)} (\partial_{t} \hat{u} + (u_{1} \cdot \nabla) \hat{u}) \cdot \hat{u} \, dx
+ \int_{{\mathcal F}_{1}(t)} \hat{u} \cdot (\hat{u} \cdot \nabla) \tilde{u}_{2} \, dx
+ \int_{{\mathcal F}_{1}(t)} \hat{u} \cdot \nabla \hat{p} \, dx
-   \int_{{\mathcal F}_{1}(t)} \hat{u} \cdot  \Delta \hat{u}  \, dx \\
=  \int_{{\mathcal F}_{1}(t)} \hat{u} \cdot \tilde{f} \, dx.
\end{multline}
Proceeding as in  Section \ref{UniqYudo}, we have 
\begin{eqnarray*}
\int_{{\mathcal F}_{1}(t)} (\partial_{t} \hat{u} + (u_{1} \cdot \nabla) \hat{u}) \cdot \hat{u} \, dx
&=& \frac{d}{dt} \int_{{\mathcal F}_{1}(t)} \frac{|\hat{u}|^{2}}{2} \, dx ,
\\ \int_{{\mathcal F}_{1}(t)} \hat{u} \cdot \nabla \hat{p} \, dx 
&=& \begin{pmatrix} \hat{\ell} \\ \hat{r} \end{pmatrix} \cdot
\int_{\partial {\mathcal S}_{1}(t)} \hat{p}  \begin{pmatrix} n_{1} \\ (x-h_{1}(t))^{\perp} \cdot n_{1} \end{pmatrix}  \, d \sigma .
\end{eqnarray*}
For the third and fourth term in \eqref{NSEnergieBase}, we have 
\begin{eqnarray*}
-  \int_{{\mathcal F}_{1}(t)} \hat{u} \cdot  \Delta \hat{u}  \, dx
&=&  2 \int_{{\mathcal F}_{1}(t)} D\hat{u} : D\hat{u}  \, dx
- \int_{\partial {\mathcal S}_{1}(t)} (D\hat{u} \cdot n_{1}) \cdot \hat{u}  \, d \sigma \\
&=& 2 \int_{{\mathcal F}_{1}(t)} D\hat{u} : D\hat{u}  \, dx
- \begin{pmatrix} \hat{\ell} \\ \hat{r} \end{pmatrix} \cdot 
\int_{\partial {\mathcal S}_{1}(t)} \begin{pmatrix} D\hat{u} \cdot n_{1} \\  (x-h_{1}(t))^{\perp} \cdot (D\hat{u} \cdot n_{1}) \end{pmatrix}  \, d \sigma,
\end{eqnarray*}
thanks to \eqref{Con1hat}-\eqref{Con2hat}.
Thus 
\begin{eqnarray}
\nonumber
\int_{{\mathcal F}_{1}(t)} \hat{u} \cdot \nabla \hat{p} \, dx  -   \int_{{\mathcal F}_{1}(t)} \hat{u} \cdot  \Delta \hat{u}  \, dx
&=&  2 \int_{{\mathcal F}_{1}(t)} D\hat{u} : D\hat{u}  \, dx - 
 \begin{pmatrix} \hat{\ell} \\ \hat{r} \end{pmatrix} \cdot
\int_{\partial {\mathcal S}_{1}(t)}  \begin{pmatrix} {\T} (\hat{u} ,\hat{p}) n_{1} \\ (x-h_{1}(t))^{\perp} \cdot {\T} (\hat{u} ,\hat{p}) n_{1} \end{pmatrix}  \, d \sigma , \\ 
\label{transfotermesbord}
&=& 2 \int_{{\mathcal F}_{1}(t)} D\hat{u} : D\hat{u}  \, dx 
+  \frac{1}{2} \frac{d}{dt} \big( m |\hat{\ell}|^{2} + {\mathcal J} |\hat{r}|^{2} \big) - m \hat{r} \hat{\ell} \cdot \tilde{\ell}_{2}^{\perp}.
\end{eqnarray}
thanks to \eqref{Solide1NShat}-\eqref{Solide2NShat}. The last term in the right hand side of \eqref{transfotermesbord} is estimated as in \eqref{termerelou}. \par
\ \par
Now for the second term in \eqref{NSEnergieBase}, we will use the following lemma.
\begin{Lemma} \label{LemInterpolation}
There exists $C >0$ such that for any $t\in (0,T)$, for any $w \in H^{1} ({\mathcal F}_1 (t)  )$ vanishing on $\partial \Omega$ and any $\varepsilon >0$, 
\begin{equation*}
\| w \|_{L^{4} ({\mathcal F}_1 (t)  )} \leqslant 
\frac{C}{\varepsilon} \|  w \|_{L^{2} ({\mathcal F}_1 (t)  )} 
+ \varepsilon \| \nabla  w \|_{L^{2} ({\mathcal F}_1 (t)  )} .
\end{equation*}
\end{Lemma}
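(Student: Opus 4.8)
The plan is to reduce the inequality on the moving domain ${\mathcal F}_1(t)$ to the single fixed reference domain ${\mathcal F}_0$, on which the statement follows from the standard two-dimensional Gagliardo--Nirenberg (Ladyzhenskaya) inequality, and then to convert the resulting interpolation bound into the stated $\varepsilon$-form by Young's inequality. The only genuine issue is the \emph{uniformity in $t$} of the constant $C$: a direct application of Gagliardo--Nirenberg on ${\mathcal F}_1(t)$ would produce a constant depending on the (moving) geometry, whereas we need one valid for all $t\in(0,T)$ simultaneously. This uniform control of the change of variables is the only point requiring care; everything else is routine.

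To obtain the uniformity, I would first transport everything to ${\mathcal F}_0$ by means of the volume-preserving diffeomorphism $\Phi_t := \Psi[\tau_1(t)]$ furnished by Proposition \ref{ProDiffeos} (recall that $T$ is taken small in the uniqueness argument, so $\tau_1(t)\in U$ for every $t$). By \eqref{EqPsiTau} the map $\Phi_t$ sends $\mathcal{S}_0$ onto $\mathcal{S}_1(t)$ and equals the identity near $\partial\Omega$, hence maps ${\mathcal F}_0$ diffeomorphically onto ${\mathcal F}_1(t)$; by \eqref{PsiVolumePreserving} it preserves Lebesgue measure. Setting $v := w\circ\Phi_t$ on ${\mathcal F}_0$, the volume-preserving property gives $\| w\|_{L^p({\mathcal F}_1(t))} = \| v\|_{L^p({\mathcal F}_0)}$ for every $p$, while the chain rule together with the uniform $C^2$ bounds on $\Phi_t$ and $\Phi_t^{-1}$ established in the proof of Corollary \ref{CorEstPsi} (see also Remark \ref{nems}) yields the two-sided comparison $\| \nabla v\|_{L^2({\mathcal F}_0)} \le C\| \nabla w\|_{L^2({\mathcal F}_1(t))}$ with $C$ independent of $t$, and conversely. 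Moreover, since $\Phi_t=\Id$ in a neighbourhood of $\partial\Omega$, the boundary condition is preserved: $v=0$ on $\partial\Omega$.

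It then remains to prove the inequality on the fixed domain ${\mathcal F}_0$. Here I would invoke the standard two-dimensional interpolation estimate $\| v\|_{L^4({\mathcal F}_0)} \le C\| v\|_{L^2({\mathcal F}_0)}^{1/2}\| v\|_{H^1({\mathcal F}_0)}^{1/2}$, obtained from the Ladyzhenskaya inequality on $\R^2$ via a continuous extension operator on the (fixed, smooth) domain ${\mathcal F}_0$. Because $v$ vanishes on $\partial\Omega$, the Poincaré inequality $\| v\|_{L^2({\mathcal F}_0)} \le C\| \nabla v\|_{L^2({\mathcal F}_0)}$ holds with a fixed constant, so that $\| v\|_{H^1({\mathcal F}_0)} \le C\| \nabla v\|_{L^2({\mathcal F}_0)}$ and hence $\| v\|_{L^4({\mathcal F}_0)} \le C\| v\|_{L^2({\mathcal F}_0)}^{1/2}\| \nabla v\|_{L^2({\mathcal F}_0)}^{1/2}$. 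Finally, Young's inequality in the form $a^{1/2}b^{1/2}\le \frac{1}{4\varepsilon}a+\varepsilon b$, valid for every $\varepsilon>0$ and applied with $a=\| v\|_{L^2({\mathcal F}_0)}$, $b=\| \nabla v\|_{L^2({\mathcal F}_0)}$ (the multiplicative constant being absorbed by a harmless rescaling of $\varepsilon$), produces the desired splitting; transporting back to ${\mathcal F}_1(t)$ through the norm identities of the previous paragraph gives the claim with a $t$-independent constant.
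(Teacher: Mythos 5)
Your proof is correct, and its analytic core coincides with the paper's: the two-dimensional Ladyzhenskaya interpolation inequality $\| w \|_{L^{4}} \leq C \| w \|_{L^{2}}^{1/2} \| w \|_{H^{1}}^{1/2}$, the Poincar\'e inequality for functions vanishing on $\partial \Omega$ (which lets one replace the $H^{1}$ norm by the gradient norm), and Young's inequality to produce the $\varepsilon$-splitting. The difference lies in where these inequalities are applied. The paper works directly on the moving domain ${\mathcal F}_{1}(t)$ and simply asserts a $t$-uniform constant (``a classical interpolation argument gives that for any $t \in (0,T)$\dots''), leaving the uniformity implicit; it ultimately rests on the fact that the domains ${\mathcal F}_{1}(t)$ have uniformly controlled geometry (rigid images of ${\mathcal S}_{0}$ kept at positive distance from $\partial \Omega$), which could be made rigorous, for instance, by the compactness argument used for Lemma \ref{LemEstLp}. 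You instead transport the inequality to the fixed domain ${\mathcal F}_{0}$ via the volume-preserving diffeomorphism $\Psi[\tau_{1}(t)]$ of Proposition \ref{ProDiffeos}, exploiting that it equals the identity near $\partial \Omega$ (so the boundary condition survives), that volume preservation gives exact equality of $L^{p}$ norms, and that $\Psi$ enjoys uniform $C^{2}$ bounds on the compact set $U$ (as recorded in the proof of Corollary \ref{CorEstPsi}). This buys an explicit, self-contained justification of the $t$-uniformity that the paper glosses over, at the modest price of requiring $T$ small enough that $\tau_{1}(t) \in U$ for all $t$ --- which is harmless here, since the lemma is invoked inside the uniqueness argument where $T$ has already been taken arbitrarily small.
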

\begin{proof}[Proof of Lemma \ref{LemInterpolation}]
A classical interpolation argument gives that for any $t\in (0,T)$, for any $w \in H^{1} ({\mathcal F}_1 (t)  )$, 
\begin{equation*}
\| w \|_{L^{4} ({\mathcal F}_1 (t)  )} 
\leqslant  C \|  w \|_{L^{2} ({\mathcal F}_1 (t) )}^{1/2} \|  w \|_{H^{1} ({\mathcal F}_1 (t) )}^{1/2}.
\end{equation*}
Since the Poincar\'e inequality holds for $w \in H^{1} ({\mathcal F}_1 (t))$ vanishing on $\partial \Omega$, we deduce that for such $w$,
\begin{equation} \label{Interpolation2}
\| w \|_{L^{4} ({\mathcal F}_1 (t)  )} \leqslant  
C \|  w \|_{L^{2} ({\mathcal F}_1 (t) )}^{1/2} \| \nabla w \|_{L^{2} ({\mathcal F}_1 (t) )}^{1/2}.
\end{equation}
We deduce the claim.
\end{proof}
It follows that we can estimate the second term in \eqref{NSEnergieBase} by
\begin{eqnarray*}
\left| \int_{{\mathcal F}_{1}(t)} \hat{u} \cdot (\hat{u} \cdot \nabla) \tilde{u}_{2} \, dx \, \right|
\leq   \| \nabla \tilde{u}_{2} \|_{L^2}  \, \| \hat{u} \|_{L^{4}}^{2}
  \leq C  \| \nabla \tilde{u}_{2} \|_{L^2}^{2} \| \hat{u} \|^{2}_{L^{2}}  +  \frac{1}{4}  \| \nabla \hat{u} \|_{L^{2}}^{2} ,
\end{eqnarray*}
where the norms above are over ${\mathcal F}_{1}(t)$. \par
\ \par
Let us now turn to the estimate of the right hand side in \eqref{NSEnergieBase}. The estimate is given in the following lemma.
\begin{Lemma} \label{Lem:EstRHS}
For some constant $C>0$ depending on the geometry only and defining the function ${\mathcal B} \in L^{1}(0,T)$ by
\begin{multline*}
{\mathcal B}(t):= 
\| \tilde{u}_{2} \|_{L^{\infty} (0,T ; L^{2} ({\mathcal F}_{1}(t)))}  (1 +  \| \nabla \tilde{u}_{2}(t,\cdot) \|_{L^{2}({\mathcal F}_{1}(t))}) \\
+  \| \tilde{u}_{2} \|_{L^{\infty} (0,T ; L^{2} ({\mathcal F}_{1}(t)))}^{1/2}  \| \nabla \tilde{u}_{2}(t) \|^{1/2}_{L^{2}({\mathcal F}_{1}(t))} \| t \nabla \tilde{u}_{2}(t) \|_{L^{4}({\mathcal F}_{1}(t))} \\
+ \big(\| t \partial_{t} \tilde{u}_{2} \|_{L^{4/3}({\mathcal F}_{1}(t))} + \| t \tilde{u}_{2} \|_{W^{2,4/3}({\mathcal F}_{1}(t))} + \| t \nabla \tilde{p}_{2} \|_{L^{4/3}({\mathcal F}_{1}(t))} \big)^{4/3},
\end{multline*}
one has the following estimate on the right hand side:
\begin{multline} \label{Eq:EstRHS}
\left| \int_{0}^T \! \! \int_{{\mathcal F}_{1}(t)} \hat{u} \cdot \tilde{f} \, dx \, dt  \right| 
\leq \frac{1}{4} \int_{0}^T \! \! \int_{{\mathcal F}_{1}(t)} |\nabla \hat{u}|^{2} \, dx \, dt \\
+ C \int_{0}^{T} {\mathcal B}(t) \Big[ 
  \max_{\tau \in [0,t]} \| \hat{u}(\tau,\cdot) \|^{2}_{L^{2}({\mathcal F}_{1}(t))} 
+ \max_{[0,t]} | (\hat{h},\hat{\theta},\hat{\ell},\hat{r}) |^{2} 
\Big] \, dt.
\end{multline}
\end{Lemma}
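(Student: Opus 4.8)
The plan is to estimate the left-hand side of \eqref{Eq:EstRHS} term by term, following the decomposition of $\tilde{f}$ displayed just above the statement. Each summand of $\tilde{f}$ is a product of a \emph{small factor} — one of the parenthesised differences such as $\partial_{k}\varphi^{i}-\delta_{ik}$, $\partial_{i}\psi^{k}-\delta_{ik}$, $\partial^{2}_{kl}\varphi^{i}$, $\partial_{t}\varphi$, $\partial_{t}\psi$ — times a factor built from $\tilde{u}_{2}$, its derivatives, or $\nabla\tilde{p}_{2}$. First I would control the small factors through Corollary \ref{CorEstPsi}, and the crucial bookkeeping is to distinguish two types. The \emph{space-type} factors (those measuring $\|\varphi_{t}-\Id\|_{C^{k}}$ or $\|\psi_{t}-\Id\|_{C^{k}}$) are bounded via \eqref{EqDiffTau1} by $C\|\tau_{1}(t)-\tau_{2}(t)\|_{\R^{3}}$; since $\hat{h}(t)=\int_{0}^{t}(\ell_{1}-\ell_{2})$ and $\hat{\theta}(t)=\int_{0}^{t}\hat{r}$, an application of the bound \eqref{solid} gives $\|\tau_{1}(t)-\tau_{2}(t)\|_{\R^{3}}\le C\,t\,\max_{[0,t]}|(\hat{h},\hat{\theta},\hat{\ell},\hat{r})|$, so every space-type factor carries an extra power of $t$. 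The \emph{time-type} factors ($\partial_{t}\varphi$, $\partial_{t}\psi$) are bounded through \eqref{EqDiffTau2} by $C\max_{[0,t]}|(\hat{h},\hat{\theta},\hat{\ell},\hat{r})|$ with no gain of $t$, but they only ever multiply the low-order quantities $\tilde{u}_{2}$ and $\nabla\tilde{u}_{2}$, which are controlled without any time weight by the energy estimate $\tilde{u}_{2}\in L^{\infty}(0,T;L^{2})\cap L^{2}(0,T;H^{1})$.

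The only summands of $\tilde{f}$ that involve the high-order quantities $\partial_{t}\tilde{u}_{2}$, $\partial^{2}\tilde{u}_{2}$ and $\nabla\tilde{p}_{2}$ are precisely those carrying a space-type small factor; and these are exactly the quantities for which Proposition \ref{PropAPrioriEstimatesNS} (transported to $\tilde{u}_{2},\tilde{p}_{2}$ through the smooth change of variables) supplies only the \emph{weighted} bounds $t\,\partial_{t}\tilde{u}_{2},\,t\,\nabla\tilde{p}_{2}\in L^{4/3}(\mathcal{F}_{T})$ and $t\,\tilde{u}_{2}\in L^{4/3}(0,T;W^{2,4/3})$. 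The point is that the extra factor $t$ produced by the space-type small factor is exactly what turns $\|\partial_{t}\tilde{u}_{2}\|_{L^{4/3}}$, $\|\partial^{2}\tilde{u}_{2}\|_{L^{4/3}}$ and $\|\nabla\tilde{p}_{2}\|_{L^{4/3}}$ into admissible weighted norms; this is the origin of the third group of terms in $\mathcal{B}$. Similarly, the $t$ attached to the bilinear term $\tilde{u}_{2}^{l}\partial_{l}\tilde{u}_{2}^{k}(\partial_{k}\varphi^{i}-\delta_{ik})$ is used to build $\|t\nabla\tilde{u}_{2}\|_{L^{4}}$, producing the second group of $\mathcal{B}$. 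That $\mathcal{B}\in L^{1}(0,T)$ then follows: the first group is in $L^{1}$ since $\|\nabla\tilde{u}_{2}\|_{L^{2}}\in L^{2}(0,T)$; the third is in $L^{1}$ since the weighted norms are in $L^{4/3}(0,T)$ and are raised to the power $4/3$; and for the second group one invokes the two-dimensional embedding $W^{1,4/3}\hookrightarrow L^{4}$ to get $\|t\nabla\tilde{u}_{2}\|_{L^{4}}\lesssim\|t\tilde{u}_{2}\|_{W^{2,4/3}}\in L^{4/3}(0,T)$, which pairs with $\|\nabla\tilde{u}_{2}\|_{L^{2}}^{1/2}\in L^{4}(0,T)$ by Hölder in time.

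With the factors thus allocated, I would close each estimate with three elementary tools, applied under the space integral and then in time: Hölder in $x$ to distribute the integrability (the $L^{4/3}$–$L^{4}$ pairings balancing as $\tfrac34+\tfrac14=1$), the interpolation inequality $\|\hat{u}\|_{L^{4}}\le C\|\hat{u}\|_{L^{2}}^{1/2}\|\nabla\hat{u}\|_{L^{2}}^{1/2}$ of Lemma \ref{LemInterpolation} (legitimate since $\hat{u}$ vanishes on $\partial\Omega$ by \eqref{Con2hat}), and Young's inequality. The exponents in Young must be tuned so that, on one hand, each occurrence of $\|\nabla\hat{u}\|_{L^{2}}$ appears to the second power with a small constant — the sum of all such contributions being at most $\tfrac14\int_{0}^{T}\!\int_{\mathcal{F}_{1}(t)}|\nabla\hat{u}|^{2}$ — and, on the other hand, the surviving $\tilde{u}_{2}$/$\tilde{p}_{2}$ norms appear to the power $4/3$ (hence time-integrable) multiplied by $\max_{[0,t]}\|\hat{u}\|_{L^{2}}^{2}+\max_{[0,t]}|(\hat{h},\hat{\theta},\hat{\ell},\hat{r})|^{2}$. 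Concretely, for the worst summand one writes $\|\nabla\hat{u}\|_{L^{2}}^{1/2}\cdot(\text{rest})\le\frac{\delta}{4}\|\nabla\hat{u}\|_{L^{2}}^{2}+C_{\delta}(\text{rest})^{4/3}$, and then splits $(\text{rest})^{4/3}$ by a further Young inequality into a weighted norm of $\tilde{u}_{2}$ times $\max_{[0,t]}|(\hat{h},\hat{\theta},\hat{\ell},\hat{r})|^{2}+\max_{[0,t]}\|\hat{u}\|_{L^{2}}^{2}$.

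I expect the main difficulty to be exactly this weight bookkeeping in the three high-order terms: one must check that every high-order derivative of $\tilde{u}_{2}$ or $\tilde{p}_{2}$ is genuinely paired with a space-type (and never a time-type) small factor, so that the compensating power of $t$ is always available, and then select the Young exponents so that the outcome is simultaneously summable in $x$ and integrable in $t$ against the $L^{4/3}$ weighted norms. By contrast, the remaining lower-order terms are routine: each yields, after Hölder and the elementary bound $M(t)\|\hat{u}\|_{L^{2}}\le\tfrac12\big(M(t)^{2}+\|\hat{u}\|_{L^{2}}^{2}\big)$ with $M(t):=\max_{[0,t]}|(\hat{h},\hat{\theta},\hat{\ell},\hat{r})|$, a contribution dominated by a first-group summand of $\mathcal{B}$ times $\max_{[0,t]}|(\hat{h},\hat{\theta},\hat{\ell},\hat{r})|^{2}+\max_{[0,t]}\|\hat{u}\|_{L^{2}}^{2}$.
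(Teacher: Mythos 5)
Your proposal is correct and follows essentially the same route as the paper: the paper splits $\tilde{f}$ into five groups $\tilde{f}_{1},\dots,\tilde{f}_{5}$ matching your space-type/time-type classification, its estimate \eqref{EstTransvase} is exactly your observation that space-type factors carry an extra power of $t$ (turning $\partial_{t}\tilde{u}_{2}$, $\partial^{2}\tilde{u}_{2}$, $\nabla\tilde{p}_{2}$ into the weighted norms of Proposition \ref{PropAPrioriEstimatesNS}), and it closes with the same interpolation \eqref{Interpolation2} and Young absorption into $\tfrac14\int|\nabla\hat{u}|^{2}$, with the same $L^{1}$ justification for the three groups of $\mathcal{B}$.
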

\begin{proof}[Proof of Lemma \ref{Lem:EstRHS}]
In what follows, $C>0$ denotes various positive constants depending on the geometry and which can change from line to line.
We cut $\tilde{f}$ into pieces which are to be estimated separately. Precisely, we denote
\begin{equation*}
\tilde{f} = \tilde{f}_{1} + \tilde{f}_{2}+ \tilde{f}_{3} + \tilde{f}_{4} + \tilde{f}_{5},
\end{equation*}
with
\begin{align*}
\tilde{f}_{1} &:= ( \partial_{k} \partial_{t} \varphi^{i}) \tilde{u}_{2}^{k} 
+ (\partial^{2}_{kl} \varphi^{i}) \, (\partial_{t} \psi^{l})\, \tilde{u}_{2}^{k}
- \sum_{j} \Big[ \partial_{j} \psi^{m} (\partial^{3}_{mlk} \varphi^{i}) \, \partial_{j} \psi^{l} \,  \tilde{u}_{2}^{k}  
+ (\partial^{2}_{lk} \varphi^{i}) \, \partial^2_{jj} \psi^{l} \,  \tilde{u}_{2}^{k} \Big], \\
\tilde{f}_{2} &:= \partial_{k} \varphi^{i} \, \partial_{l} \tilde{u}_{2}^{k} \, (\partial_{t} \psi^{l}) 
- \sum_{j} \Big[ \partial_{j} \psi^{m} (\partial^{2}_{mk} \varphi^{i} ) \, \partial_{l} \tilde{u}_{2}^{k} \,  \partial_{j} \psi^{l}
+ \partial_{k} \varphi^{i} \, \partial_{l} \tilde{u}_{2}^{k} (\partial^2_{j} \psi^{l} ) 
+ (\partial^{2}_{lk} \varphi^{i}) \, \partial_{j} \psi^{l} \,  \partial_{j} \psi^{m}  \,   \partial_{m} \tilde{u}_{2}^{k} \Big] , \\
\tilde{f}_{3} &:= (\partial^{2}_{lk} \varphi^{i}) \, \tilde{u}_{2}^{l} \,  \tilde{u}_{2}^{k} , \\
\tilde{f}_{4} &:= \tilde{u}_{2}^{l}  \, \partial_{l} \tilde{u}_{2}^{k} (\partial_{k} \varphi^{i}- \delta_{ik} ) , \\
\tilde{f}_{5} &:= (\partial_{k} \varphi^{i} - \delta_{ik}) \partial_{t} \tilde{u}_{2}^{k} 
+ \partial_{k} \tilde{p}_{2} \, (\partial_{i} \psi^{k} - \delta_{ik}) 
- \sum_{j} (\partial_{k} \varphi^{i}\partial_{j} \psi^{m} \partial_{j} \psi^{l} - \delta_{ik} \delta_{jm} \delta_{jl}) \partial^{2}_{ml} \tilde{u}_{2}^{k}.
\end{align*}
\begin{itemize}
\item[$\bullet$] Concerning $\tilde{f}_{1}$, using Corollary \ref{CorEstPsi} we deduce that
\begin{eqnarray*}
\left| \int_{0}^T \! \! \int_{{\mathcal F}_{1}(t)} \hat{u} \cdot \tilde{f}_{1} \, dx \, dt  \right| 
&\leq& C \| \tilde{u}_{2} \|_{L^{\infty} (0,T ; L^{2} ({\mathcal F}_{1}(t)))}
\int_{0}^{T}   \max_{\tau \in [0,t]} \| \hat{u}(\tau,\cdot) \|_{L^{2}({\mathcal F}_{1}(t))}
\max_{[0,t]} | (\hat{h},\hat{\theta},\hat{\ell},\hat{r})| \, dt \\
&\leq& C  \| \tilde{u}_{2} \|_{L^{\infty} (0,T ; L^{2} ({\mathcal F}_{1}(t)))} 
\int_{0}^{T}  \big( \max_{\tau \in [0,t]} \| \hat{u}(\tau,\cdot) \|_{L^{2}({\mathcal F}_{1}(t))}^{2} 
+ \max_{[0,t]} |(\hat{h},\hat{\theta},\hat{\ell},\hat{r})|^{2} \big) \, dt.
\end{eqnarray*}
\item[$\bullet$] Concerning $\tilde{f}_{2}$, using Corollary \ref{CorEstPsi} one has for almost every $t$:
\begin{equation*}
\left| \int_{{\mathcal F}_{1}(t)} \hat{u} \cdot \tilde{f}_{2} \, dx \right| \leq C  \| \nabla \tilde{u}_{2}(t,\cdot) \|_{L^{2}({\mathcal F}_{1}(t))} |(\hat{h},\hat{\theta})(t)| \, \| \hat{u} \|_{L^{2}({\mathcal F}_{1}(t))}.
\end{equation*}
We fix 
\begin{equation*}
{\mathcal B}_{1}(t) := \| \nabla \tilde{u}_{2}(t,\cdot) \|_{L^{2}({\mathcal F}_{1}(t))} \in L^{2}(0,T) \subset L^{1}(0,T),
\end{equation*}
and have
\begin{equation*}
\left| \int_{0}^T \! \! \int_{{\mathcal F}_{1}(t)} \hat{u} \cdot \tilde{f}_{2} \, dx \, dt  \right| 
\leq C \int_{0}^{T} {\mathcal B}_{1}(t) \big(\max_{\tau \in [0,t]} \| \hat{u}(\tau,\cdot) \|_{L^{2}({\mathcal F}_{1}(t))}^{2} + \max_{[0,t]} |(\hat{h},\hat{\theta})|^{2}\big) \, dt.
\end{equation*}
\item[$\bullet$] Concerning $\tilde{f}_{3}$: one has for almost every $t>0$, using \eqref{Interpolation2}:
\begin{eqnarray*}
\left| \int_{{\mathcal F}_{1}(t)} \hat{u} \cdot \tilde{f}_{3} \, dx \right| &\leq& C  \| \tilde{u}_{2}(t,\cdot) \|^{2}_{L^{4}({\mathcal F}_{1}(t))} |(\hat{h},\hat{\theta})(t)| \, \| \hat{u} \|_{L^{2}({\mathcal F}_{1}(t))} \\
&\leq& C \| \tilde{u}_{2}(t,\cdot) \|_{L^{2}({\mathcal F}_{1}(t))}
\| \nabla \tilde{u}_{2}(t,\cdot) \|_{L^{2}({\mathcal F}_{1}(t))}
|(\hat{h},\hat{\theta})(t)| \, \| \hat{u} \|_{L^{2}({\mathcal F}_{1}(t))}.
\end{eqnarray*}
Using again the function ${\mathcal B}_{1}$, we have 
\begin{equation*}
\left| \int_{0}^T \! \! \int_{{\mathcal F}_{1}(t)} \hat{u} \cdot \tilde{f}_{3} \, dx \, dt  \right| 
\leq C  \| \tilde{u}_{2} \|_{L^{\infty} (0,T ; L^{2} ({\mathcal F}_{1}(t)))}
\int_{0}^{T} {\mathcal B}_{1}(t) \big( \max_{\tau \in [0,t]} \| \hat{u}(\tau,\cdot) \|_{L^{2}({\mathcal F}_{1}(t))}^{2}
 + \max_{[0,t]} |(\hat{h},\hat{\theta})|^{2} \big) \, dt.
\end{equation*}
\item[$\bullet$] Concerning $\tilde{f}_{4}$: we first note that thanks to Proposition \ref{PropAPrioriEstimatesNS}, we have 
\begin{equation*}
t \nabla \tilde{u}_{2} \in L^{4/3}(0,T;W^{1,4/3}({\mathcal F}_{1}(t))) \hookrightarrow L^{4/3}(0,T;L^{4}({\mathcal F}_{1}(t))).
\end{equation*}
On another side we infer from Corollary \ref{CorEstPsi} that for some constant one has
\begin{equation} \label{EstTransvase}
\| \frac{1}{t} (\partial_{k} \varphi^{i}_{t} - \delta_{ik}) \|_{C^{3}(\overline{\Omega})} \leq C
\| (\hat{\ell},\hat{r}) \|_{L^{\infty}(0,t)}.
\end{equation}
Using \eqref{Interpolation2} and \eqref{EstTransvase} we deduce that 
\begin{eqnarray*}
\left| \int_{0}^{T} \! \! \int_{{\mathcal F}_{1}(t)} \hat{u} \cdot \tilde{f}_{4} \, dx \, dt \right|
\leq C \int_{0}^{T} \| \tilde{u}_{2}(t) \|_{L^{4}({\mathcal F}_{1}(t))} 
\| t \nabla \tilde{u}_{2}(t) \|_{L^{4}({\mathcal F}_{1}(t))} \| (\hat{\ell},\hat{r}) \|_{L^{\infty}(0,t)}
\| \hat{u}(t,\cdot) \|_{L^{2}({\mathcal F}_{1}(t))} \, dt \\
\quad \leq C \int_{0}^{T} \| \tilde{u}_{2}(t) \|^{1/2}_{L^{2}({\mathcal F}_{1}(t))} 
\| \nabla \tilde{u}_{2}(t) \|^{1/2}_{L^{2}({\mathcal F}_{1}(t))} 
\| t \nabla \tilde{u}_{2}(t) \|_{L^{4}({\mathcal F}_{1}(t))} \| (\hat{\ell},\hat{r}) \|_{L^{\infty}(0,t)}
\| \hat{u}(t,\cdot) \|_{L^{2}({\mathcal F}_{1}(t))} \, dt.
\end{eqnarray*}
We introduce
\begin{equation*}
{\mathcal B}_{2}(t) :=  \| \nabla \tilde{u}_{2}(t) \|^{1/2}_{L^{2}({\mathcal F}_{1}(t))} \| t \nabla \tilde{u}_{2}(t) \|_{L^{4}({\mathcal F}_{1}(t))}  \in L^{1}(0,T),
\end{equation*}
as a product $L^{4}(0,T) \times L^{4/3}(0,T)$, and deduce
\begin{equation*}
\left| \int_{0}^{T} \! \! \int_{{\mathcal F}_{1}(t)} \hat{u} \cdot \tilde{f}_{4} \, dx \, dt \right|
\leq C  \| \tilde{u}_{2} \|_{ L^{\infty} (0,T ; L^{2} ({\mathcal F}_{1}(t)))}^{1/2} \int_{0}^{T} {\mathcal B}_{2}(t) \big[ \| (\hat{\ell},\hat{r}) \|_{L^{\infty}(0,t)}^{2}
+ \| \hat{u}(t,\cdot) \|_{L^{2}({\mathcal F}_{1}(t))}^{2} \big] \, dt.
\end{equation*}
\item[$\bullet$] Concerning $\tilde{f}_{5}$: we use again \eqref{EstTransvase} and we introduce 
\begin{equation*}
b(t) := \| t \partial_{t} \tilde{u}_{2}(t) \|_{L^{4/3}({\mathcal F}_{1}(t))} + \| t \tilde{u}_{2}(t) \|_{W^{2,4/3}({\mathcal F}_{1}(t))} +  \| t \nabla \tilde{p}_{2}(t) \|_{L^{4/3}({\mathcal F}_{1}(t))},
\end{equation*}
which belongs to $L^{4/3}(0,T)$ thanks to Proposition \ref{PropAPrioriEstimatesNS}. One deduces that
\begin{equation*}
\left| \int_{0}^{T} \int_{{\mathcal F}_{1}(t)} \hat{u} \cdot \tilde{f}_{5} \, dx \, dt \right|
\leq
C \int_{0}^{T} b(t) \| (\hat{\ell},\hat{r}) \|_{L^{\infty}(0,t)}
\| \hat{u}(t,\cdot) \|_{L^{4}({\mathcal F}_{1}(t))} \, dt .
\end{equation*}
Hence, with 
$2 b \mu \nu \leq b^{2 \alpha} \mu^{2} +  b^{2 (1-\alpha)} \nu^{2}$ for $\mu, \nu \in \R$, $b\geq0$ and $\alpha \in (0,1)$,
we deduce that
\begin{equation*}
\left| \int_{0}^{T} \! \! \int_{{\mathcal F}_{1}(t)} \hat{u} \cdot \tilde{f}_{5} \, dx \, dt \right| \leq C_{1} \int_{0}^{t} b(t)^{2/3} \| \hat{u}(t,\cdot) \|^{2}_{L^{4}({\mathcal F}_{1}(t))} \, dt 
+ C \int_{0}^{t} b(t)^{4/3}  \| (\hat{\ell},\hat{r}) \|^{2}_{L^{\infty}(0,t)} \, dt .
\end{equation*}
We specify the constant $C_{1}$ for later use. For the first term, one writes
\begin{eqnarray*}
\int_{0}^{T} b(t)^{2/3} \| \hat{u}(t,\cdot) \|^{2}_{L^{4}({\mathcal F}_{1}(t))} \, dt & \leq & C 
\int_{0}^{T} b(t)^{2/3} \| \hat{u}(t,\cdot) \|_{L^{2}({\mathcal F}_{1}(t))}  \| \nabla \hat{u}(t,\cdot) \|_{L^{2}({\mathcal F}_{1}(t))} \, dt \\
 & \leq & C \int_{0}^{T} b(t)^{4/3} \| \hat{u}(t,\cdot) \|^{2}_{L^{2}({\mathcal F}_{1}(t))}  \, dt 
+ \frac{1}{4C_{1}} \int_{0}^{T} \| \nabla \hat{u}(t,\cdot) \|^{2}_{L^{2}({\mathcal F}_{1}(t))} \, dt .
\end{eqnarray*}
So one has
\begin{equation*}
\left| \int_{0}^{T} \! \! \int_{{\mathcal F}_{1}(t)} \hat{u} \cdot \tilde{f}_{5} \, dx \, dt \right|
\leq \frac{1}{4} \int_{0}^{T} \| \nabla \hat{u}(t,\cdot) \|^{2}_{L^{2}({\mathcal F}_{1}(t))} \, dt +
C \int_{0}^{T} {\mathcal B}_{3}(t) \Big[ \| \hat{u}(t,\cdot) \|_{L^{2}({\mathcal F}_{1}(t))}^{2} + \| (\hat{\ell},\hat{r}) \|^{2}_{L^{\infty}(0,t)} \Big] \, dt ,
\end{equation*}
with ${\mathcal B}_{3}:= b(t)^{4/3} \in L^{1}(0,T)$.
\end{itemize}
\ \par
Summing up all the estimates above, we deduce \eqref{Eq:EstRHS}.
\end{proof}
\ \par
\noindent
{\it Back to the proof of Theorem \ref{ThmLeray}.}
We extend $\hat{u}(t,\cdot)$ inside ${\mathcal F}_{1}(t)$ by $\hat{\ell} + \hat{r} (x-h_{1}(t))$. We obtain that $\hat{u}(t,\cdot)$ is a $L^{2}(0,T;H^1(\Omega))$ divergence free vector field, vanishing on $\partial \Omega$.
Therefore
\begin{equation*}
\int_{{\mathcal F}_{1}(t)} |\nabla \hat{u}|^{2}\, dx \leq
\int_{\Omega} |\nabla \hat{u}|^{2} \, dx = 
2 \int_{\Omega} |D\hat{u} |^{2}  \, dx 
= 2 \int_{{\mathcal F}_{1}(t)} |D\hat{u}|^{2}  \, dx .
\end{equation*}
Now we take into account the vanishing initial condition for $(\hat{\ell},\hat{r},\hat{u})$ to deduce that for any $T>0$ sufficiently small,
\begin{equation*}
m |\hat{\ell}(T)|^{2} + {\mathcal J} |\hat{r}(T)|^{2}  + \| \hat{u}(T) \|^{2}_{L^{2}({\mathcal F}_{1}(T))} \leq C \int_{0}^{T} {\mathcal B}(t) \Big[ 
  \max_{\tau \in [0,t]} \| \hat{u}(\tau,\cdot) \|^{2}_{L^{2}({\mathcal F}_{1}(t))} 
+ \max_{[0,t]} |(\hat{h},\hat{\theta},\hat{\ell},\hat{r})(t) |^{2} 
\Big] \, dt.
\end{equation*}
Proceeding as in \eqref{solid} we get 
\begin{equation*}
\frac{d}{dt} \left( |\hat{h}|^{2} + |\hat{\theta}|^{2} \right) \leq C \big(|\hat{\ell}|^{2} + |\hat{r}|^{2}  + |\hat{h}|^{2} + |\hat{\theta}|^{2}\big).
\end{equation*}
Hence using ${\mathcal B}(t) \in L^{1}$ and Gronwall's lemma concludes the proof.
\section{Appendix. Proof of Theorem \ref{ThmYudo}}
In this appendix, we will use the letter $\eta$ for the fluid flow and $\tau$ for the solid flow. \par
To $(\ell,r) \in C^{0}([0,T];\R^{2} \times \R)$ we can associate $h^{\ell,r} , \theta^{\ell,r} , u_{{\mathcal S}}^{\ell,r} , \tau^{\ell,r} $ and ${\mathcal F}^{\ell,r}$ by \eqref{Eq:xbq}, \eqref{Defvsolide}, \eqref{Eq:rota}, \eqref{Eq:St} and \eqref{Eq:Do}. We also introduce
\begin{equation*}
\varphi^{\ell,r} := \Psi (\tau^{\ell,r}),
\end{equation*}
where $\Psi$ was defined in Lemma \ref{ProDiffeos}. We can ensure that $\tau^{\ell,r}$ belongs to the set $U$ of definition of $\Psi$ by choosing $T$ suitably small. \par
We may omit the dependence on $(\ell,r)$ on the above objects when there is no ambiguity. \par
As in Section \ref{Section:PrioriYudo} we suppose that $\partial \Omega$ has $g+1$ connected components $\Gamma_{1}, \dots , \Gamma_{g+1}$ and that $\Gamma_{g+1}$ is the outer one; and 
we denote by $\Gamma_{0}=\Gamma_{0}(t)=\partial {\mathcal S}(t)$,  by $\mathfrak{t}$ the tangent to $\partial \Omega$ and $\partial {\mathcal S}(t)$ and we define
\begin{equation*}
\gamma_{0}^{i} := \int_{\Gamma_{i}} u_{0} \cdot \mathfrak{t} \, d \sigma \ \text{ for } i=1, \dots, g \ \text{ and } \ 
\gamma_{0} := \int_{\partial {\mathcal S}_{0}} u_{0} \cdot \mathfrak{t} \, d \sigma .
\end{equation*}
We will use the following variant of Lemma \ref{LemEstLp}.
\begin{Lemma} \label{LemEstLL}
For any $R>0$, there exists $C>0$ such that if ${\mathcal S} = \tau ({\mathcal S}_{0})$ for $\tau \in SE(2)$ satisfies
\eqref{ContrainteS}
then any $u : {\Omega \setminus {\mathcal S}} \rightarrow \R^{2}$ satisfying
\begin{equation*}
	\div u = 0 \ \text{ in } \ \Omega \setminus {\mathcal S} , \quad
	u \cdot n = 0 \ \text{ on } \ \partial \Omega  \ \text{ and } \   u \cdot n =  ( \ell + r x^\perp ) \cdot  n    \text{ on } \ \partial {\mathcal S},
\end{equation*}
where $( \ell , r ) \in \R^2 \times \R$, 
verifies (setting again $\Gamma_{0}:= \partial{\mathcal S}$):
\begin{equation} \label{EllipticEstimateLL}
\| u \|_{\mathcal{LL} (\Omega \setminus {\mathcal S})} \leq C \Big( \| \curl u \|_{L^{\infty}(\Omega \setminus {\mathcal S})} + \sum_{i=0}^{g} \left| \int_{\Gamma_{i}}  u \cdot \mathfrak{t} \, d \sigma \right|      +  | \ell |   +  | r |  \Big).
\end{equation}
\end{Lemma}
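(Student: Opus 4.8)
The plan is to bypass a direct potential-theoretic construction and instead deduce the log-Lipschitz bound from the $L^{q}$ estimate of Lemma~\ref{LemEstLp}, crucially exploiting that the constant there is \emph{linear} in $q$; the family of Sobolev bounds so obtained is then converted into a log-Lipschitz modulus by a Morrey inequality optimized over $q$. Throughout I write ${\mathcal F}=\Omega\setminus{\mathcal S}$ and set
\[
M := \| \curl u \|_{L^{\infty}({\mathcal F})} + \sum_{i=0}^{g} \Big| \int_{\Gamma_{i}} u \cdot \mathfrak{t} \, d\sigma \Big| + |\ell| + |r|,
\]
so that \eqref{EllipticEstimateLL} reads $\|u\|_{\mathcal{LL}({\mathcal F})}\leq CM$.

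First I would feed $u$ into Lemma~\ref{LemEstLp}. Since $\div u=0$, only the curl term survives, and $\|\curl u\|_{L^{q}({\mathcal F})}\leq |{\mathcal F}|^{1/q}\|\curl u\|_{L^{\infty}({\mathcal F})}\leq C\|\curl u\|_{L^{\infty}({\mathcal F})}$ uniformly in $q\geq1$ and in $\tau$, because $|{\mathcal F}|\leq|\Omega|$. For the normal trace, $u\cdot n=0$ on $\partial\Omega$ while $u\cdot n=(\ell+rx^{\perp})\cdot n$ on $\partial{\mathcal S}$; taking in the convention \eqref{ConventionSF} the extension $\overline f:=\chi\,(\ell+rx^{\perp})\cdot N$, where $N$ is a fixed smooth extension of the unit normal of $\partial{\mathcal S}$ and $\chi$ a cutoff equal to $1$ near $\partial{\mathcal S}$ and vanishing near $\partial\Omega$, one gets $\|u\cdot n\|_{W^{1-1/q,q}(\partial{\mathcal F})}\leq\|\overline f\|_{W^{1,q}({\mathcal F})}\leq C(|\ell|+|r|)$, again uniformly in $q$ and $\tau$ since $|{\mathcal F}|\leq|\Omega|$ and $\diam{\mathcal F}\leq\diam\Omega$. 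Together with the circulation terms this yields, with $C$ independent of $\tau$,
\[
\|u\|_{W^{1,q}({\mathcal F})} \leq C\, q\, M \qquad\text{for all } q\in[2,\infty).
\]

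Next I would upgrade this to the log-Lipschitz estimate. The choice $q=4$ together with $W^{1,4}({\mathcal F})\hookrightarrow L^{\infty}({\mathcal F})$ controls the $L^{\infty}$ part, $\|u\|_{L^{\infty}({\mathcal F})}\leq CM$. For the modulus of continuity I would invoke Morrey's inequality $|u(x)-u(y)|\leq C\,|x-y|^{1-2/q}\,\|\nabla u\|_{L^{q}({\mathcal F})}$, valid for $q\geq4$ with a constant $C$ that stays \emph{bounded} as $q\to\infty$ (in dimension two the Morrey constant blows up only near $q=2$). Combining with the previous display gives $|u(x)-u(y)|\leq C\,q\,M\,|x-y|^{1-2/q}$ for every $q\geq4$. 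For $x\neq y$ with $d:=|x-y|\leq e^{-2}$ I would take $q=2\ln(1/d)\geq4$, so that $d^{-2/q}=e$ and $q=2\ln(1/d)$, whence $|u(x)-u(y)|\leq C M\, d\,\ln(1/d)\leq C M\, d\,(1+\ln^{-}d)$. For $d\in[e^{-2},\diam{\mathcal F}]$ the quantity $d(1+\ln^{-}d)$ is bounded below, so the same bound follows from $\|u\|_{L^{\infty}}\leq CM$. These two regimes control the seminorm in \eqref{DefLL}, giving $\|u\|_{\mathcal{LL}({\mathcal F})}\leq CM$.

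The point that must be secured is the uniformity in $\tau$ of all constants, in particular the Sobolev embedding and Morrey constants for the moving domain ${\mathcal F}=\Omega\setminus\tau({\mathcal S}_{0})$. This is handled exactly as in the proof of Lemma~\ref{LemEstLp}: the set of admissible $\tau$ is compact, and one may either argue by compactness or transport everything to the reference domain $\Omega\setminus{\mathcal S}_{0}$ through the diffeomorphisms $\Psi[\tau]$ of Proposition~\ref{ProDiffeos}, whose $C^{k}$ norms and those of their inverses are uniformly bounded on $U$ by Corollary~\ref{CorEstPsi}, so that Sobolev norms transfer with $\tau$-independent constants. The only genuinely non-bookkeeping ingredient, and the crux of the argument, is the boundedness in $q$ of the two-dimensional Morrey constant: it is precisely this which makes the optimization over $q$ produce the logarithmic modulus rather than a strictly worse one.
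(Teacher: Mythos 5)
Your proposal is correct and takes essentially the same route as the paper, whose entire proof of Lemma \ref{LemEstLL} is the remark that it ``is a direct consequence of Lemma \ref{LemEstLp} and Morrey's estimates''; your write-up simply supplies the details of that one-line argument (uniform-in-$q$ control of the data, the linear-in-$q$ constant from Lemma \ref{LemEstLp}, the $q$-uniform two-dimensional Morrey constant, and the Yudovich optimization $q \sim \ln(1/|x-y|)$). No gap to report.
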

\begin{proof}
It is a direct consequence of Lemma \ref{LemEstLp} and Morrey's estimates. It can also be established directly by following the lines of the proof of Lemma \ref{LemEstLp}.
\end{proof}

\ \par
\subsection{With a prescribed solid movement}
\label{Subsec:PrescribedMovement}
We first prove the following result, which concerns the Euler system with a prescribed solid movement of ${\mathcal S}(t)$ inside $\Omega$, and gives  existence of a solution as long as no collision occurs. 
\begin{Proposition}
\label{SolideImpose}
Let $T >0$ and a regular closed connected subset $\mathcal{S}_0 \subset \Omega$ and define ${\mathcal F}_{0}:= \Omega \setminus {\mathcal S}_{0}$. 
Consider $(\ell,r) \in C^{0}([0,T]; \R^{2} \times \R)$ such that
\begin{equation} \label{LoinDuBord}
\text{ for any } t \in [0,T], \ \  \dist \big( \tau^{\ell,r}(t)[{\mathcal S}_{0}], \partial \Omega \big) >0.
\end{equation}
Consider $u_0  \in C^{0}(\overline{\mathcal{F}_0};\R^{2})$ satisfying  \eqref{CondCompatibilite} and \eqref{TourbillonYudo}.
Then  the problem \eqref{Euler1}-\eqref{Euler2}-\eqref{Euler3}-\eqref{Euler4} (with ${\mathcal S}(t):=\tau^{\ell,r}({\mathcal S}_{0})$ and ${\mathcal F}(t):= \Omega \setminus {\mathcal S}(t)$) admits a unique solution 
$$u \in L^{\infty} (0,T; \mathcal{LL} (\mathcal{F}(t)) ) \cap C^{0}([0,T]; W^{1,q}({\mathcal F}(t)))], \ \ \forall q \in [1,+\infty). $$
\end{Proposition}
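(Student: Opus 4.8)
The plan is to recast \eqref{Euler1}-\eqref{Euler4}, at fixed solid motion, as a coupled transport--elliptic problem and then run the classical Yudovich scheme, the only genuinely new ingredient being that all elliptic estimates must be uniform with respect to the position of the solid, which is exactly what Lemma \ref{LemEstLL} provides. First I would observe that a field $u$ is a solution if and only if its vorticity $\omega = \curl u$ is transported by the fluid flow, $\partial_{t}\omega + (u\cdot\nabla)\omega = 0$, while at each time $u$ is recovered from $\omega$, from the boundary data $u\cdot n = u_{\mathcal S}\cdot n$ on $\partial\mathcal{S}(t)$ and $u\cdot n = 0$ on $\partial\Omega$, and from the circulations $\gamma_{0}^{i}$, $\gamma_{0}$, through the $\div$/$\curl$ system. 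Because $u\cdot n$ matches the normal solid velocity on $\partial\mathcal{S}(t)$ and vanishes on $\partial\Omega$, the flow of $u$ maps $\mathcal{F}_{0}$ onto $\mathcal{F}(t)$; since it is volume preserving, $\|\omega(t)\|_{L^{q}(\mathcal{F}(t))} = \|\omega_{0}\|_{L^{q}(\mathcal{F}_{0})}$ for every $q$, and Kelvin's theorem keeps the circulations constant. Feeding these conserved quantities into Lemma \ref{LemEstLL}, which is legitimate by hypothesis \eqref{LoinDuBord} keeping the solid at positive distance from $\partial\Omega$ on $[0,T]$, yields a bound on $\|u(t)\|_{\mathcal{LL}(\mathcal{F}(t))}$ uniform in $t$ and depending only on $\|\omega_{0}\|_{L^{\infty}}$, the circulations and $|\ell|+|r|$.

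For existence I would set up a Schauder fixed point. Let $K$ be the set of time-dependent velocity fields that are divergence free, carry the prescribed normal trace and circulations, and satisfy the above uniform log-Lipschitz bound; this set is convex and, by the uniform modulus of continuity, compact in $C^{0}$. To $w\in K$ one associates its flow $\Phi_{w}$, well defined and unique since $w$ is log-Lipschitz (Osgood's lemma) and transporting $\mathcal{F}_{0}$ onto $\mathcal{F}(t)$, then the transported vorticity $\omega := \omega_{0}\circ\Phi_{w}(0,t,\cdot)$, and finally the field $\mathcal{T}(w)$ recovered from $\omega$ through the $\div$/$\curl$ system. The conservation of $\|\omega\|_{L^{q}}$ together with Lemma \ref{LemEstLL} gives $\mathcal{T}(K)\subset K$, and continuity of $\mathcal{T}$ follows from continuous dependence of the flow on $w$ (Osgood again) combined with $L^{q}$-continuity of $\omega_{0}\circ\Phi$ in the flow. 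Schauder's theorem then furnishes a fixed point, which is a solution; the uniform $W^{1,q}$ bounds, together with continuity in time of $\omega$ in $L^{q}$ and of the geometry, give the claimed regularity $C^{0}([0,T];W^{1,q}(\mathcal{F}(t)))$.

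For uniqueness, the crucial simplification is that the solid motion is \emph{prescribed}, so two solutions $u_{1},u_{2}$ live in the \emph{same} moving domain $\mathcal{F}(t)$ and share the same normal boundary trace; hence $\hat{u}:=u_{1}-u_{2}$ is divergence free with $\hat{u}\cdot n = 0$ on all of $\partial\mathcal{F}(t)$, and no change of variables is needed. Writing the difference equation $\partial_{t}\hat{u} + (u_{1}\cdot\nabla)\hat{u} + (\hat{u}\cdot\nabla)u_{2} + \nabla\hat{p} = 0$ and testing against $\hat{u}$ on $\mathcal{F}(t)$ transported by the flow of $u_{1}$, the pressure term integrates to a boundary term that vanishes since $\hat{u}\cdot n = 0$, leaving $\frac{d}{dt}\frac{1}{2}\|\hat{u}\|_{L^{2}}^{2} = -\int_{\mathcal{F}(t)}\hat{u}\cdot(\hat{u}\cdot\nabla)u_{2}$. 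Bounding the right-hand side by $\|\nabla u_{2}\|_{L^{q}}\|\hat{u}\|_{L^{2q'}}^{2}$, using $\|\nabla u_{2}\|_{L^{q}}\leq Cq$ from the a priori estimate and interpolating $\|\hat{u}\|_{L^{2q'}}$ between $L^{2}$ and $L^{\infty}$, one obtains $y'\leq Cq\,y^{1/q'}$ for $y=\|\hat{u}\|_{L^{2}}^{2}$; since $y(0)=0$, comparison with the solution $[Nt/q+\varepsilon^{1/q}]^{q}$ and the limit $q\to\infty$ force $y\equiv 0$, exactly as in Section \ref{section:UniqYudo}.

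The main obstacle I expect is the passage to the limit, equivalently the continuity of $\mathcal{T}$, with vorticity that is merely $L^{\infty}$: one cannot pass to the limit pointwise in $\omega_{0}\circ\Phi_{w}$ and must argue by density of continuous functions, and one must control the flow solely through its log-Lipschitz (Osgood) well-posedness rather than through a Lipschitz contraction. Everything else is a uniform-in-the-solid-position rerun of the classical Yudovich theory, made possible by Lemma \ref{LemEstLL}.
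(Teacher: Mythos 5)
Your strategy is in essence the paper's own: a transport/div--curl reformulation, the uniform elliptic estimate of Lemma \ref{LemEstLL}, Schauder's theorem for existence, and a Yudovich-type energy estimate for uniqueness. Indeed, your uniqueness paragraph is exactly what the paper prescribes (run the argument of Subsection \ref{section:UniqYudo} with $(\ell_{1},r_{1})=(\ell_{2},r_{2})$, the pressure term disappearing because $\hat{u}\cdot n=0$ on all of $\partial\mathcal{F}(t)$), and that part is fine. However, your existence step contains a genuine gap: the claim that the set $K$ of divergence-free fields with prescribed normal trace, prescribed circulations and a uniform log-Lipschitz bound ``is convex and, by the uniform modulus of continuity, compact in $C^{0}$'' is false. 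The log-Lipschitz bound controls the space variable only, uniformly in time; it imposes no equicontinuity in $t$, so Ascoli's theorem does not apply to $K$. Concretely, if $B$ is a small ball staying inside $\mathcal{F}(t)$ for all $t\in[0,T]$ and $\chi\in C^{\infty}_{c}(B)$, then perturbing an element of $K$ that does not saturate the bound by $\varepsilon\sin(nt)\,\nabla^{\perp}\chi$ (divergence free, zero normal trace, zero circulations) yields a sequence in $K$ with no subsequence converging uniformly in time. Since $K$ is not compact, Schauder's theorem instead requires the relative compactness of the \emph{image} $\mathcal{T}(K)$, and establishing this is where the real work lies: one needs equicontinuity in time of the output fields, uniform over $w\in K$, which comes from the uniform $L^{\infty}$ bound on $w$ (making the flows uniformly Lipschitz in time and uniformly H\"older in space), from the measure-preserving/density argument applied to $t\mapsto\omega_{0}\circ\Phi_{w}(t,0,\cdot)$ in $L^{p}$, and then from the elliptic estimate once more. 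None of this appears in your proposal.

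There is a second, related point you leave unaddressed: your candidate fields live on the moving domains $\mathcal{F}(t)$, so it is not clear in which fixed Banach space the set $K$ sits --- something you need even to state closedness, convexity and Schauder's theorem. The paper resolves both difficulties at once by performing the fixed point at the level of the vorticity pulled back to the fixed domain: the set $\mathcal{C}\subset L^{\infty}((0,T)\times\mathcal{F}_{0})$ is only required to be closed, convex and bounded for the $L^{\infty}(0,T;L^{3}(\mathcal{F}_{0}))$ topology; the operator $\mathcal{T}(w)=\omega_{0}\circ\eta(t,\cdot)^{-1}\circ\varphi(t)$ returns to $\mathcal{F}_{0}$ through the volume-preserving diffeomorphisms $\varphi(t)=\Psi[\tau^{\ell,r}(t)]$ of Proposition \ref{ProDiffeos}; and compactness is proved for $\mathcal{T}(\mathcal{C})$, not for $\mathcal{C}$, via Lemma \ref{Lem:DelaCompacite}, whose hypothesis is H\"older regularity of the composing maps \emph{jointly} in $(t,x)$ --- precisely the time regularity your argument omits. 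Your scheme can be repaired along these lines, but as written the compactness step fails.
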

\noindent
{\it Proof of Proposition \ref{SolideImpose}.}
We use Schauder's fixed point theorem in order to prove the existence part. 
Let $(\ell,r)$ be fixed so that \eqref{LoinDuBord} holds. We deduce $\tau(t)$, $\varphi(t)$, ${\mathcal S}(t)$ and ${\mathcal F}(t)$ as previously.
We will also use, for $T>0$,  the notation 
\begin{equation*}
{\mathcal F}_T := \cup_{t \in (0,T)} \{ t \} \times {\mathcal F}(t) .
\end{equation*}
We let
\begin{align*}
{\mathcal C}:=\Big\{ w \in L^{\infty}((0,T) \times {\mathcal F}_{0}) \ / \ \| w \|_{L^{\infty}((0,T) \times {\mathcal F}_{0})} \leq \| \omega_{0} \|_{L^{\infty}({\mathcal F}_{0})}
 \Big\}.
\end{align*}
We endow ${\mathcal C}$ with the $L^{\infty}(0,T;L^{3}({\mathcal F}_{0}))$ topology. Note that ${\mathcal C}$ is closed and convex. \par
\ \par
Now we define ${\mathcal T}= {\mathcal T}^{\ell,r}:{\mathcal C} \rightarrow {\mathcal C}$ as follows. Given $w \in {\mathcal C}$, we define $\omega: {\mathcal F}_T \rightarrow \R^{2}$ by
\begin{equation} \label{DefOmega}
\omega(t,x) =  w(\varphi(t)^{-1}(x)),
\end{equation}
which belongs to $L^{\infty}(0,T;L^{\infty}({\mathcal F}(t)))$.
Next we define $u: {\mathcal F}_T  \rightarrow \R^{2}$ by the following system
\begin{equation} \label{Eq:vtransporte}
\left\{ \begin{array}{l}
\curl {u} = {\omega} \text{ in } {\mathcal F}_T, \\
\div {u} = 0 \text{ in } {\mathcal F}_T, \\
{u}\cdot n =0 \text{ on } [0,T] \times \partial \Omega, \\
{u}(t,x)\cdot n = u_{{\mathcal S}}(t,x)\cdot n \text{ for } t\in [0,T] \text{ and }  x \in  \partial {\mathcal S(t)}, \\
\int_{\Gamma_{i}} {u} \cdot \mathfrak{t} \, d \sigma  = \gamma_{0}^{i}  \text{ for all } i =1 \dots g, \\
\int_{\partial {\mathcal S(t)}} {u} \cdot \mathfrak{t} \, d \sigma  = \gamma_{0} ,
\end{array} \right.
\end{equation}
with $u_{{\mathcal S}}$ defined in \eqref{Defvsolide}. 
According to Lemma  \ref{LemEstLL}, $u$ belongs to $L^{\infty}(0,T; \mathcal{LL}({\mathcal F}(t)))$.\par
Consequently we can define the flow $\eta(t,x)$ associated to $u$ in a unique way. This flow sends, for each $t$, ${\mathcal F}_{0}$ to ${\mathcal F}(t)$. 
Finally, we let 
\begin{equation} \label{DefT}
{\mathcal T}(w):=\omega_{0} \circ \eta(t,\cdot)^{-1} \circ \varphi(t).
\end{equation}
It is trivial that ${\mathcal T}({\mathcal C}) \subset {\mathcal C}$. It remains to prove that ${\mathcal T}$ is continuous and that ${\mathcal T}({\mathcal C})$ is relatively compact in $L^{\infty}((0,T); L^{3}({\mathcal F}_{0}))$. \par
\ \par
Let us begin with the continuity. We consider $(w_{n}) \in {\mathcal C}^{\N}$ converging to  $w \in {\mathcal C}$ for the $L^{\infty}(0,T;L^{3}({\mathcal F}_{0}))$ norm. We associate $u_{n}$ and $\eta_{n}$ corresponding to $w_{n}$ in the above construction, and accordingly $u$ and $\eta$ corresponding to $w$. 
Using \eqref{Eq:vtransporte}, Sobolev imbeddings and Lemma \ref{LemEstLp}, it is not difficult to see that the velocities $u_{n}$ converge to the velocity $u$ in $L^{\infty}(0,T , L^{\infty}({\mathcal F}(t)))$. Also, from Lemma \ref{LemEstLL} we deduce that for some $C>0$,
\begin{equation*}
\| u \|_{L^{\infty}(0,T; \mathcal{LL}({\mathcal F}(t)))}, \  \| u_{n} \|_{L^{\infty}(0,T; \mathcal{LL}({\mathcal F}(t)))} \leq C.
\end{equation*}
This involves the uniform convergence of $\eta_{n}^{-1}$ to $\eta^{-1}$. The convergence of $\varphi_{n}$ to $\varphi$ comes from the continuity of $\Psi$. 
From this we can deduce that
\begin{equation*}
{\mathcal T}(w_{n}) \longrightarrow {\mathcal T}(w) \ \text{ in } \ L^{\infty}((0,T);L^{3}({\mathcal F}_{0})).
\end{equation*}
Indeed, if $\omega_{0} \in C^{0}(\overline{{\mathcal F}_{0}})$, this can be straightforwardly deduced from the uniform continuity of $\omega_{0}$ and \eqref{DefT}. The general case can be inferred by using the density of $C^{0}(\overline{{\mathcal F}_{0}})$ in $L^{\infty}({\mathcal F}_{0})$ for the $L^{3}({\mathcal F}_{0})$ topology. \par
\ \par
Now let us prove the relative compactness of ${\mathcal T}({\mathcal C})$ in $L^{\infty}(0,T; L^{3}({\mathcal F}_{0}))$. This is a consequence of the following lemma.
\begin{Lemma} \label{Lem:DelaCompacite}
Let $C>0$, $\alpha \in (0,1)$ and $\omega_{0} \in L^{\infty}({\mathcal F}_{0})$. Then the set
\begin{equation*}
A(\omega_{0}):=\Big\{ \omega_{0} \circ \psi(t,x) \text{ for } \psi \in C^{\alpha}([0,T] \times {\mathcal F}_{0};{\mathcal F}_{0}) \text{ such that } 
\| \psi \|_{C^{\alpha}} \leq C \text{ and } \psi \text{ is measure-preserving} \Big\},
\end{equation*}
is relatively compact in $L^{\infty}(0,T;L^{3}({\mathcal F}_{0}))$.
\end{Lemma}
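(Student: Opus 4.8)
The plan is to establish the relative compactness of the set $A(\omega_{0})$ in $L^{\infty}(0,T;L^{3}({\mathcal F}_{0}))$ by first reducing to the case of a continuous initial vorticity and then exploiting the uniform H\"older bound on the measure-preserving maps $\psi$ together with a diagonal/Arzel\`a--Ascoli argument. The main structural difficulty is that the composition operator $\omega_{0} \mapsto \omega_{0}\circ\psi$ is in general not continuous on $L^{\infty}$, so one cannot directly argue on $A(\omega_{0})$; instead one must first approximate $\omega_{0}$ in the $L^{3}$ topology and control the compositions uniformly.

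\emph{Reduction to continuous data.} First I would fix $\varepsilon>0$ and choose $\omega_{0}^{\varepsilon}\in C^{0}(\overline{{\mathcal F}_{0}})$ with $\| \omega_{0}-\omega_{0}^{\varepsilon}\|_{L^{3}({\mathcal F}_{0})}\leq \varepsilon$, which is possible by density. The key point is that composition with a measure-preserving map is an \emph{isometry} on $L^{3}$: since $\psi(t,\cdot)$ preserves Lebesgue measure, a change of variables gives
\begin{equation*}
\| (\omega_{0}-\omega_{0}^{\varepsilon})\circ \psi(t,\cdot)\|_{L^{3}({\mathcal F}_{0})} = \| \omega_{0}-\omega_{0}^{\varepsilon}\|_{L^{3}({\mathcal F}_{0})} \leq \varepsilon,
\end{equation*}
uniformly in $t$ and in $\psi$. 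Hence $A(\omega_{0})$ lies within an $\varepsilon$-neighborhood (in $L^{\infty}(0,T;L^{3})$) of $A(\omega_{0}^{\varepsilon})$, and it suffices to show that $A(\omega_{0}^{\varepsilon})$ is relatively compact, since a set admitting a relatively compact $\varepsilon$-net for every $\varepsilon$ is itself totally bounded, hence relatively compact in the complete metric space $L^{\infty}(0,T;L^{3}({\mathcal F}_{0}))$.

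\emph{Compactness for continuous data.} For $\omega_{0}^{\varepsilon}$ continuous (hence uniformly continuous on the compact $\overline{{\mathcal F}_{0}}$), the map $(t,x)\mapsto \omega_{0}^{\varepsilon}(\psi(t,x))$ inherits equicontinuity from the uniform H\"older bound $\| \psi\|_{C^{\alpha}}\leq C$: given a modulus of continuity $\varrho$ for $\omega_{0}^{\varepsilon}$, one has
\begin{equation*}
|\omega_{0}^{\varepsilon}(\psi(t,x))-\omega_{0}^{\varepsilon}(\psi(s,y))| \leq \varrho\big(C(|t-s|+|x-y|)^{\alpha}\big),
\end{equation*}
so the family $\{\omega_{0}^{\varepsilon}\circ\psi\}$ is uniformly bounded and equicontinuous on the compact set $[0,T]\times \overline{{\mathcal F}_{0}}$. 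By the Arzel\`a--Ascoli theorem this family is relatively compact in $C^{0}([0,T]\times \overline{{\mathcal F}_{0}})$, and since $C^{0}$-convergence implies convergence in $L^{\infty}(0,T;L^{3}({\mathcal F}_{0}))$ (the domain being bounded), $A(\omega_{0}^{\varepsilon})$ is relatively compact in the desired topology.

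\emph{Application back to ${\mathcal T}$.} Finally I would verify that ${\mathcal T}({\mathcal C})$ falls within this framework. The maps arising in \eqref{DefT} are of the form $\psi(t,x)=\eta(t,\cdot)^{-1}(\varphi(t,x))$; one checks that each such $\psi$ is measure-preserving (since $\eta$ is volume-preserving by $\div u=0$, and $\varphi=\Psi[\tau]$ is volume-preserving by \eqref{PsiVolumePreserving}) and that the uniform bound $\| u\|_{L^{\infty}(0,T;\mathcal{LL}({\mathcal F}(t)))}\leq C$ from Lemma \ref{LemEstLL}, together with the smoothness of $\varphi$, yields a uniform H\"older bound $\| \psi\|_{C^{\alpha}}\leq C$ for some $\alpha\in(0,1)$ (log-Lipschitz velocity fields generate flows that are uniformly H\"older on any finite time interval). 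Thus ${\mathcal T}({\mathcal C})\subset A(\omega_{0})$, and Lemma \ref{Lem:DelaCompacite} gives its relative compactness, completing the verification of the hypotheses of Schauder's theorem.
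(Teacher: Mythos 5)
Your proof is correct and takes essentially the same route as the paper's: approximate $\omega_{0}$ in $L^{3}$ by a continuous function, use the measure-preserving property to transfer that approximation uniformly across all compositions $\omega_{0}\circ\psi$, apply Arzel\`a--Ascoli to the continuous approximant to get compactness in $C^{0}$ (hence in $L^{\infty}(0,T;L^{3})$), and conclude by total boundedness in the complete space $L^{\infty}(0,T;L^{3}({\mathcal F}_{0}))$. Your closing paragraph on ${\mathcal T}({\mathcal C})$ is not part of the lemma's proof itself, but it matches the paper's subsequent application of the lemma.
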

\begin{proof}[Proof of Lemma \ref{Lem:DelaCompacite}]
We prove the total boundedness of $A(\omega_{0})$. Let us be given $\varepsilon>0$. There exists $\omega_{1} \in C^{0}(\overline{{\mathcal F}_{0}})$ such that
\begin{equation*}
\| \omega_{1} - \omega_{0} \|_{L^{3}({\mathcal F}_{0})} \leq \varepsilon.
\end{equation*}
Due to the continuity of $\omega_{1}$, it is a direct consequence of Ascoli's theorem that $A(\omega_{1})$ is relatively compact in $C^{0}([0,T] \times \overline{{\mathcal F}_{0}})$, and hence in $L^{\infty}(0,T;L^{3}({\mathcal F}_{0}))$. We deduce the existence of $\psi_{1}$, \dots, $\psi_{N}$ as above such that
\begin{equation*}
A(\omega_{1}) \subset B(\omega_{1} \circ \psi_{1};\varepsilon) \cup \cdots \cup B(\omega_{1} \circ \psi_{N};\varepsilon),
\end{equation*}
where the balls are considered in the space $L^{\infty}(0,T;L^{3}({\mathcal F}_{0}))$. One sees that
\begin{equation*}
A(\omega_{0}) \subset B(\omega_{1} \circ \psi_{1};2\varepsilon) \cup \cdots \cup B(\omega_{1} \circ \psi_{N};2\varepsilon) ,
\end{equation*}
which concludes the proof of the lemma.
\end{proof}
\begin{proof}[Back to the proof of Proposition \ref{SolideImpose}]
Using again Lemma \ref{LemEstLL}, we see that we have uniform log-Lipschitz estimates on the velocities $u$ as $w \in {\mathcal C}$. This implies uniform H\"older estimates on the flows $\eta$ that we constructed for $w \in {\mathcal C}$. So we conclude by Lemma \ref{Lem:DelaCompacite} that ${\mathcal T}({\mathcal C})$ is relatively compact. \par
Hence we deduce by Schauder's fixed point theorem that ${\mathcal T}$ admits a fixed point. One checks easily that the corresponding $u$ fulfills the claims. \par
Finally, the uniqueness is proved exactly as in Yudovich's original setting for the fluid alone; alternatively, one can use the proof of uniqueness established in Subsection \ref{section:UniqYudo} with $(\ell_{1},r_{1})=(\ell_{2},r_{2})$. \par
\end{proof}
\subsection{Continuous dependence on the solid movement}
Now we prove that the solution constructed in Subsection \ref{Subsec:PrescribedMovement} depends continuously on the solid movement $(\ell,r)$. \par
Precisely, given $T>0$ and $(\ell,r)$ as in Subsection \ref{Subsec:PrescribedMovement}, we denote
$u^{\ell,r}$ the unique corresponding solution $u$ given by Proposition \ref{SolideImpose}, and $\omega^{\ell,r}:=\curl u^{\ell,r}$ the corresponding vorticity. We associate then
\begin{equation*}
\tilde{u}^{\ell,r} := {u}^{\ell,r} \circ \varphi^{\ell,r}
\ \text{ and } \
\tilde{\omega}_{n} := \omega^{\ell,r} \circ \varphi^{\ell,r}.
\end{equation*}
We have the following Proposition.
\begin{Proposition} \label{DependanceContinue}
Let $T>0$, $(\ell_{n},r_{n}) \in C^{0}([0,T]; \R^{2} \times \R)^{\N}$ and  $(\ell,r) \in C^{0}([0,T]; \R^{2} \times \R)$ such that $(\ell_{n},r_{n})$ and $(\ell,r)$ satisfy \eqref{LoinDuBord} and
\begin{equation*}
(\ell_{n},r_{n}) \longrightarrow (\ell,r) \text{ in } C^{0}([0,T]; \R^{2} \times \R) \ \text{ as } \ n \rightarrow +\infty.
\end{equation*}
Then
\begin{equation} \label{CVTildeu}
\tilde{u}^{\ell_{n},r_{n}}\longrightarrow \tilde{u}^{\ell,r} \text{ in } C^{0}([0,T] \times \overline{{\mathcal F}_{0}}) \ \text{ as } \ n \rightarrow +\infty.
\end{equation}
\end{Proposition}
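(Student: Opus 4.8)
The plan is to argue by compactness combined with the uniqueness supplied by Proposition \ref{SolideImpose}. Concretely, I will show that every subsequence of $(\tilde u^{\ell_n,r_n})$ admits a further subsequence converging in $C^{0}([0,T]\times\overline{\mathcal F_0})$, and that any limit of such a subsequence must coincide with $\tilde u^{\ell,r}$; since the limit is thereby independent of the extraction, the whole sequence converges, which is \eqref{CVTildeu}.

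First I would set up uniform bounds. As $(\ell,r)$ satisfies \eqref{LoinDuBord} and $[0,T]$ is compact, there is $\delta>0$ with $\dist(\tau^{\ell,r}(t)[{\mathcal S}_0],\partial\Omega)\geq 2\delta$ on $[0,T]$; the uniform convergence $(\ell_n,r_n)\to(\ell,r)$ then forces $\dist(\tau^{\ell_n,r_n}(t)[{\mathcal S}_0],\partial\Omega)\geq\delta$ for $n$ large, so Lemma \ref{LemEstLL} applies with a constant independent of $n$. Since the vorticity $\omega^{\ell_n,r_n}$ is transported by the volume-preserving flow $\eta_n$ (so its $L^{\infty}$ norm is conserved), the circulations are conserved, and $|\ell_n|+|r_n|$ is bounded, Lemma \ref{LemEstLL} gives a uniform bound on $\|u^{\ell_n,r_n}\|_{L^{\infty}(0,T;\mathcal{LL}(\mathcal F_n(t)))}$. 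This yields a uniform Osgood (log-Lipschitz) modulus for the fields $u^{\ell_n,r_n}$, hence uniform H\"older-in-$x$ and Lipschitz-in-$t$ bounds for the flows $\eta_n$ and their inverses; and, through Corollary \ref{CorEstPsi} together with the continuity of $\Psi$, the uniform convergence $\varphi^{\ell_n,r_n}\to\varphi^{\ell,r}$ in $C^{2}$.

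Next, from these bounds Arzel\`a--Ascoli lets me extract a subsequence (not relabelled) along which $\eta_n\to\bar\eta$ and $\eta_n^{-1}\to\bar\eta^{-1}$ uniformly. I then pass to the limit in the fixed-frame transport representation $\tilde\omega_n=\omega_0\circ\eta_n^{-1}\circ\varphi_n$. Because $\omega_0$ is only $L^{\infty}$, this passage is the delicate step: I would handle it exactly as in the proof of Lemma \ref{Lem:DelaCompacite}, approximating $\omega_0$ in $L^{3}(\mathcal F_0)$ by a continuous function and exploiting the uniform convergence of the measure-preserving maps $\eta_n^{-1}\circ\varphi_n$, to obtain $\tilde\omega_n\to\omega_0\circ\bar\eta^{-1}\circ\varphi^{\ell,r}$ in $L^{\infty}(0,T;L^{3}(\mathcal F_0))$. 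Transporting back, the vorticities $\omega_n=\tilde\omega_n\circ\varphi_n^{-1}$ converge in $L^{\infty}(0,T;L^{3})$, and feeding this into the $\div$/$\curl$ system \eqref{Eq:vtransporte} via the stability estimate of Lemma \ref{LemEstLp} with $q=3$ and the Sobolev embedding $W^{1,3}\hookrightarrow C^{0}$ (precisely the reasoning already used in the continuity part of the proof of Proposition \ref{SolideImpose}) gives uniform-in-$t$ convergence of the velocities to some $\bar u$. Equivalently $\tilde u^{\ell_n,r_n}=u^{\ell_n,r_n}\circ\varphi_n\to\bar u\circ\varphi^{\ell,r}$ in $C^{0}([0,T]\times\overline{\mathcal F_0})$, the supremum being taken over both variables at once.

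Finally I would identify the limit. The uniform convergence of $u^{\ell_n,r_n}$ together with the uniform Osgood modulus gives, by the standard stability of ODE flows for log-Lipschitz fields, that $\bar\eta$ is the flow of $\bar u$; hence $\omega_0\circ\bar\eta^{-1}\circ\varphi^{\ell,r}$ is exactly a fixed point of the map ${\mathcal T}^{\ell,r}$ of \eqref{DefT}, so $\bar u$ solves \eqref{Euler1}--\eqref{Euler4} with prescribed movement $(\ell,r)$. By the uniqueness part of Proposition \ref{SolideImpose}, $\bar u=u^{\ell,r}$, so $\bar u\circ\varphi^{\ell,r}=\tilde u^{\ell,r}$ independently of the chosen subsequence, which establishes \eqref{CVTildeu}. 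I expect the main obstacle to be precisely the limit passage in the composition $\omega_0\circ\eta_n^{-1}\circ\varphi_n$ when $\omega_0$ is merely bounded, which is why the approximation scheme of Lemma \ref{Lem:DelaCompacite} is indispensable; the secondary technical point is the uniform stability of the log-Lipschitz flows, for which the uniform bound of Lemma \ref{LemEstLL} is the key input.
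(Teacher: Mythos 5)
Your proposal is correct and follows essentially the same route as the paper: uniform log-Lipschitz bounds from Lemma \ref{LemEstLL}, compactness of the pulled-back velocities and flows, passage to the limit in the transport relation $\omega_{0}\circ\eta_{n}^{-1}\circ\varphi_{n}$ and in the $\div$/$\curl$ system \eqref{Eq:vtransporte}, and identification of the limit via the uniqueness part of Proposition \ref{SolideImpose}. The only (immaterial) variations are that you obtain strong $L^{\infty}(0,T;L^{3})$ convergence of the vorticities through the Lemma \ref{Lem:DelaCompacite} approximation where the paper is content with the weak-$*$ limit, and you extract the limiting flow by Ascoli before identifying it as the flow of $\bar u$, where the paper invokes directly the stability of log-Lipschitz flows under uniform convergence.
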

\begin{proof}[Proof of Proposition \ref{DependanceContinue}]
Following Subsection \ref{Subsec:PrescribedMovement}, we see that $(\tilde{u}^{\ell_{n},r_{n}})$ is relatively compact in 
$C^{0}([0,T] \times \overline{{\mathcal F}_{0}})$. Also, the sequence $(\tilde{\omega}^{\ell_{n},r_{n}})$ is weakly-$*$ relatively compact in $L^{\infty}((0,T) \times {\mathcal F}_{0})$. To prove \eqref{CVTildeu}, it is hence sufficient to prove that the unique limit point of the sequence $(\tilde{u}^{\ell_{n},r_{n}}, \tilde{\omega}^{\ell_{n},r_{n}})$ in the space $C^{0}([0,T] \times \overline{{\mathcal F}_{0}}) \times [L^{\infty}((0,T) \times {\mathcal F}_{0})-w*]$ is $(\tilde{u}^{\ell,r},\tilde{\omega}^{\ell,r})$. \par
Now consider a converging subsequence of $(\tilde{u}^{\ell_{n},r_{n}},\tilde{\omega}^{\ell_{n},r_{n}})$. For notational convenience, we still denote this subsequence $(\tilde{u}^{n},\tilde{\omega}^{n} )$, and call $(\tilde{\overline{u}},\tilde{\overline{\omega}})$ the limit. We associate the functions $w^{n}$, $\eta^{n}$, $\overline{w}$, $\overline{\eta}$ corresponding to $(\ell_{n},r_{n},\tilde{u}^{n},\tilde{\omega}^{n})$ and $(\ell,r,\tilde{\overline{u}},\tilde{\overline{\omega}})$
as in Subsection \ref{Subsec:PrescribedMovement}. \par
By uniqueness in Proposition \ref{SolideImpose}, to prove
\begin{equation*}
(\tilde{\overline{u}},\tilde{\overline{\omega}}) = (\tilde{u}^{\ell,r},\tilde{\omega}^{\ell,r}),
\end{equation*}
is it sufficient to prove that $(\tilde{\overline{u}},\tilde{\overline{\omega}})$ corresponds to a solution of Proposition \ref{SolideImpose} with prescribed solid movement $(\ell,r)$. For that, we observe that the relation
\begin{equation*}
w^{n}(t,\cdot) = \omega_{0} \circ \eta^{n}(t,\cdot)^{-1} \circ \varphi^{\ell_{n},r_{n}}(t).
\end{equation*}
passes to the $L^{\infty}((0,T) \times {\mathcal F}_{0})$ weak-$*$ limit (or to the $L^{\infty}(0,T;L^{p}({\mathcal F}_{0}))$ one, $p \in [1,\infty)$) since $\tilde{u}^{\ell_{n},r_{n}}$ converges uniformly to $\tilde{\overline{u}}$, so the corresponding flows converge uniformly (using again the uniform log-Lipschitz estimates). So we infer
\begin{equation*}
\overline{w}(t,\cdot) = \omega_{0} \circ \overline{\eta}(t,\cdot)^{-1} \circ \varphi^{\ell,r}(t).
\end{equation*}
On the other side, it is not difficult to pass to the limit in \eqref{Eq:vtransporte}, so that in particular
\begin{equation*}
\curl \Big( \tilde{\overline{u}}\circ ( \varphi^{\ell,r}(t,\cdot)^{-1} )  \Big) =  \overline{w}(t,\cdot) \circ ( \varphi^{\ell,r}(t,\cdot)^{-1} ).
\end{equation*}
Hence we deduce that $(\tilde{\overline{u}},\tilde{\overline{\omega}})$ is indeed a solution of Proposition \ref{SolideImpose} with solid movement $(\ell,r)$. The convergence \eqref{CVTildeu} follows. 
\end{proof}

\subsection{Endgame}

Let us now proceed to the proof of Theorem \ref{ThmYudo}.
Once again we are going to use Schauder's fixed point theorem.

 Let $d := d \left( {\mathcal S}_{0} , \partial \Omega \right) > 0$ and $C>0$.
 We introduce
\begin{equation*}
{\mathcal D}:= \Big\{ (\ell,r) \in C^{0}([0,T];\R^{3}) \ \Big/  \  \| (\ell,r)  \|_{C^{0}([0,T];\R^{3})} \leq C \Big\} ,
\end{equation*}
where $T >0$ is chosen such that $ CT (1 + \diam ({\mathcal S}_{0}) ) \leq    \frac{{d}}{3} $.
Observe that this condition yields in particular that $\tau^{\ell,r}$ satisfies 
\begin{equation} \label{loinbord}
\dist \left( \tau^{\ell,r}(t)({\mathcal S}_{0}), \partial \Omega \right)  \geq \frac{{d}}{3} \ \text{ for any } \ t \in  [0,T].
\end{equation}
Note that ${\mathcal D}$ is closed and convex. \par
Now we construct an operator ${\mathcal A}$ on ${\mathcal D}$ in the following way. To $(\ell,r) \in {\mathcal D}$, we associate, as in Subsection \ref{Subsec:PrescribedMovement}, $Q(t)$, ${\mathcal S}(t)$, ${\mathcal F}(t)$ and $u$ as the fixed point of the operator ${\mathcal T}^{\ell,r}$. 
We also consider the  Kirchhoff potentials $\Phi_{i}$ as in   \eqref{KirchoffPotentials} and 
the mass matrix $\mathcal{M}$ as in \eqref{Added}.
Then, we define  ${\mathcal A}(\ell,r):=(\tilde{\ell},\tilde{r})$, where  for any $ t \in  [0,T]$,
\begin{equation*} 
\begin{bmatrix} \tilde{\ell} \, \\ \tilde{r} \end{bmatrix} (t)
= 
\begin{bmatrix} \ell_0 \\ {r}_0 \end{bmatrix}
+  \int_0^t  \mathcal{M}^{-1} (s)
\left(
\begin{bmatrix}  \displaystyle\int_{ \mathcal{F}(s)}  u\cdot[  (u \cdot \nabla) \nabla  \Phi_i ] \, dx   \end{bmatrix}_{i \in \{1,2,3\}} -  \begin{bmatrix}  B_i (s) \end{bmatrix}_{i \in \{1,2,3\}}
 \right) \, ds ,
\end{equation*}
with
\begin{equation*}
 B_i  (s) := \begin{bmatrix}\ell  \\ r  \end{bmatrix} 
 \cdot \displaystyle\int_{\partial \mathcal{S}(s)} ( u\cdot  \nabla  \Phi_i )  \begin{bmatrix}n  \\  (x-h(s))^\perp \cdot n  \end{bmatrix} \, d \sigma .
\end{equation*}
Due to the boundedness of $\| \nabla \Phi_{i} \|_{C^{1,\alpha}}$, ${\mathcal M}^{-1}$ under the condition \eqref{loinbord} and the one of $\| u \|_{\infty}$,
shrinking $T$ if necessary, we have that  ${\mathcal A}({\mathcal D}) \subset {\mathcal D}$. \par
Now, let us prove that ${\mathcal A}$ has a fixed point in ${\mathcal D}$.
That ${\mathcal A}({\mathcal D})$ is relatively compact in $C^{0}([0,T];\R^{3})$ follows from Ascoli's theorem.
That ${\mathcal A}$ is continuous follows from Proposition \ref{DependanceContinue} and the convergence for all $t$, under the assumptions of Proposition \ref{DependanceContinue}:
\begin{equation*}
\Phi_{i}^{\ell_{n},r_{n}} \circ \varphi^{\ell_{n},r_{n}} \longrightarrow \Phi_{i}^{\ell,r} \circ \varphi^{\ell,r} \ \text{ in } \ C^{2}(\overline{{\mathcal F}_{0}}) \ \text{ as } \ n \rightarrow +\infty.
\end{equation*}
This convergence can be deduced from the compactness of the sequence $(\nabla \Phi_{i}^{\ell_{n},r_{n}})$ in $C^{1}(\overline{{\mathcal F}_{0}})$ (due to Lemma \ref{LemHolder}) and the fact that $\Phi^{\ell,r} \circ \varphi^{\ell_{n},r_{n}} \circ (\varphi^{\ell,r})^{-1}$ converges to a function satisfying the correct system \eqref{KirchoffPotentials} in the limit (use for instance the computations of Subsection \ref{section:UniqYudo}).
Therefore we can apply Schauder's theorem which proves the existence of a fixed point. \par
To see that a fixed point of the operator  ${\mathcal A}$ corresponds to  a solution of \eqref{vietendue}-\eqref{Solide2} it is sufficient to observe that, thanks to an integration by parts, the solid equations can be recast as 
\begin{equation*} 
\mathcal{M} \begin{bmatrix} \ell \\ r \end{bmatrix}' 
= \begin{bmatrix}  \displaystyle\int_{ \mathcal{F}(t)}  u\cdot[  (u \cdot \nabla) \nabla  \Phi_i ] \, dx 
\end{bmatrix}_{i \in \{1,2,3\}} -  \begin{bmatrix}  B_i \end{bmatrix}_{i \in \{1,2,3\}}  .
\end{equation*}
Observe that this reformulation is slightly different from  \eqref{EvoMatrice} and is obtained by using \eqref{Euler1} and an integration by parts instead of \eqref{DecompP}. This establishes the existence part of Theorem \ref{ThmYudo}. \par
\ \par
Finally, that the lifetime can be  uniquely limited by a possible encounter of the body with the boundary follows by
contraposition, as a positive distance allows to extend the solution for a while, according to the previous arguments.
\par
\ \par
\begin{Remark}
Mixing the techniques of \cite{GSweak} and  the ones of \cite{ogfstt}, one could prove some results about the regularity in time of the flows associated to the solutions given by Theorem \ref{ThmYudo}.
More precisely, consider a solution $(\ell,r,u)$ given by Theorem \ref{ThmYudo}, then the corresponding fluid velocity field $u$ is log-Lipschitz in the $x$-variable; consequently there exists a unique flow map $\eta$ continuous from  $\R  \times{\mathcal F}_0 $ to $\mathcal{F}(t) $ such that
\begin{equation*}
\eta (t,x)  =  x + \int^{t}_0 u (s, \eta (s,x) ) ds .
\end{equation*}
Moreover there exists $c >0$ such that for any $t$, the vector field  $ \eta(t,\cdot) $ lies in the  H\"older space 
\begin{equation*}
C^{0 , \exp(-c|t|  \| \omega_0  \|_{L^{\infty}  ({\mathcal F}_0   )})}  ({\mathcal F}_0   ).
\end{equation*}
If one assume that the boundaries $\partial {\mathcal S}_{0}$ and $\partial \Omega$  are $C^{k+1,\nu}$, with $k  \in \N$ and $\nu \in (0,1)$, 
then the flow $( \tau , \eta )$ are   $C^k$ from $[0,T]$ to $ SE(2) \times C^{0 , \exp(-c T  \| \omega_0  \|_{L^{\infty}  ({\mathcal F}_0   )} )}  ({\mathcal F}_0   )$.
If one assume that the boundaries $\partial {\mathcal S}_{0}$ and $\partial \Omega$  are Gevrey of order $M \geq 1$, then  the flow $( \tau , \eta )$ are   Gevrey of order $M+2$
from $[0,T]$ to $ SE(2) \times C^{0 , \exp(-c T  \| \omega_0  \|_{L^{\infty}  ({\mathcal F}_0   )} )}  ({\mathcal F}_0   )$.
In particular in the case where  $M=1$, we see that when the boundaries are real-analytic, then the flows $( \tau , \eta )$ belong to the Gevrey space ${\mathcal G}^{3}$.
\end{Remark}
\par
\ \par
\noindent
{\bf Acknowledgements.} The  authors are partially supported by the Agence Nationale de la Recherche, Project CISIFS,  grant ANR-09-BLAN-0213-02. 
%
%
%

\end{document}